\newtheorem{prop}{Proposition}[section]
\newtheorem{theorem}[prop]{Theorem}
\newtheorem{lemma}[prop]{Lemma}
\newtheorem{corollary}[prop]{Corollary}
\theoremstyle{definition}
\newtheorem{definition}[prop]{Definition}
\theoremstyle{remark}
\newtheorem*{remark*}{Remark}
\newtheorem*{remarks*}{Remarks}
\newtheorem{remark}[prop]{Remark}
\newcommand{\R}{\mathbb{R}}
\newcommand{\C}{\mathbb{C}}
\newcommand{\N}{\mathbb{N}}
\newcommand{\Z}{\mathbb{Z}}
\newcommand{\Q}{\mathbb{Q}}
\newcommand{\ord}{\text{\textup{ord}}}
\newcommand{\cD}{{\lceil D\rceil}}
\newcommand{\eps}{\varepsilon}
\newcommand{\g}{\mathfrak{g}}
\newcommand{\Span}{\text{\textup{Span}}}
\newcommand*{\vol}{\mathop{\textup{vol}}\nolimits}
\newcommand{\Lie}{\text{\textup{Lie}}}
\newcommand{\z}{\mathfrak{z}}
\numberwithin{equation}{section}
\begin{document}

\begin{frontmatter}[classification=text]


\author[romain]{Romain Tessera\thanks{Supported by ANR Project GAMME (ANR-14-CE25-0004)}}
\author[matt]{Matthew C. H. Tointon\thanks{Supported by ERC grant GA617129 `GeTeMo' for part of the work of the project, and by a Junior Research Fellowship from Homerton College, University of Cambridge for the rest}}

\begin{abstract}
We show that an arbitrary nilprogression can be approximated by a proper coset nilprogression in upper-triangular form. This can be thought of as a nilpotent version of the Freiman--Bilu result that a generalised arithmetic progression can be efficiently contained in a proper generalised arithmetic progression, and indeed an important ingredient in the proof is a Lie-algebra version of the geometry-of-numbers argument at the centre of that result. We also present some applications. We verify a conjecture of Benjamini that if $S$ is a symmetric generating set for a group such that $1\in S$ and $|S^n|\le Mn^D$ at some sufficiently large scale $n$ then $S$ exhibits polynomial growth of the same degree $D$ at all subsequent scales, in the sense that $|S^r|\ll_{M,D}r^D$ for every $r\ge n$. Our methods also provide an important ingredient in a forthcoming companion paper in which we reprove and sharpen a result about scaling limits of vertex-transitive graphs of polynomial growth due to Benjamini, Finucane and the first author. We also note that our arguments imply that every approximate group has a large subset with a large quotient that is Freiman isomorphic to a subset of a torsion-free nilpotent group of bounded rank and step.
\end{abstract}
\end{frontmatter}

\section{Introduction}
\noindent\textsc{Background.} A finite subset $A$ of a group $G$ is said to have \emph{doubling} at most $K>0$ if $|A^2|\le K|A|$; it is said to be a \emph{$K$-approximate subgroup of $G$}, or simply a \emph{$K$-approximate group}, if it is symmetric and contains the identity and there exists $X\subset G$ with $|X|\le K$ such that $A^2\subset XA$. Here, and throughout this paper, we use the standard notation $AB=\{ab:a\in A,b\in B\}$, $A^n=\{a_1\cdots a_n:a_i\in A\}$ and $A^{-n}=\{a_1^{-1}\cdots a_n^{-1}:a_i\in A\}$. In the foundational work \cite{tao.product.set}, Tao shows that for many practical purposes sets of bounded doubling and approximate groups are essentially interchangeable.

In recent years there has been a large body of work studying approximate groups and applying them in an impressive array of fields. We refer the reader to the surveys \cite{bgt.survey,app.grps,ben.icm,helf.survey,sand.survey} for further background, as well as details of some of these applications.

An important aim of approximate group theory is to describe the algebraic structure of approximate groups, and there is a very general result of Breuillard, Green and Tao \cite{bgt} in this direction. Before we can state this result we need some definitions. Let $u_1,\ldots,u_r$ be elements of a group $G$ and let $L=(L_1,\ldots,L_r)$ be a vector of positive integers. The set of all products in the $u_i$ and their inverses in which each $u_i$ and its inverse appear at most $L_i$ times between them is called a \emph{progression} of rank $r$ and side lengths $L_1,\ldots,L_r$, and is denoted $P^\ast(u_1,\ldots,u_r;L_1,\ldots,L_r)$. We abbreviate this variously to $P^\ast(u_1,\ldots,u_r;L)$ and $P^\ast(u,L)$.

If $H$ is a finite subgroup of $G$ that is normalised by $P^\ast(u,L)$, and if $u_1,\ldots,u_r$ generate an $s$-step nilpotent group modulo $H$, then $HP^\ast(u,L)$ is said to be a \emph{coset nilprogression} of \emph{rank} $r$ and \emph{step} $s$. If $H$ is trivial, we say simply that $P^\ast(u,L)$ is a \emph{nilprogression} of \emph{rank} $r$ and \emph{step} $s$.

Broadly speaking, the result of Breuillard, Green and Tao is then as follows.
\begin{theorem}[Breuillard--Green--Tao {\cite[Corollary 2.11]{bgt}}, partial statement]\label{thm:bgt.partial}
Let $A$ be a $K$-approximate group. Then there exists a coset nilprogression $HP\subset A^4$ of rank and step at most $O_K(1)$ and a set $X\subset\langle A\rangle$ with $|X|\ll_K1$ such that $A\subset XHP$.
\end{theorem}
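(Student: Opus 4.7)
The plan is to follow the ultralimit/Lie-model strategy of Breuillard--Green--Tao, which reduces the structural problem for approximate groups to the (essentially classical) structure theory of connected locally compact groups. The starting point is to argue by contradiction and compactness: suppose the conclusion fails, then for each $n$ there is a counterexample $A_n$, a $K$-approximate group with $K$ fixed. One forms the ultraproduct $\mathbf{A}=\prod_{n\to\omega} A_n$ inside the nonstandard group $\mathbf{G}=\prod_{n\to\omega}\langle A_n\rangle$. The key object is then the \emph{Lie model}: by a theorem of Hrushovski (made quantitative in the approximate-group setting), there is a locally compact group $L$ and a ``good model'' homomorphism $\pi\colon\langle\mathbf{A}\rangle_\omega\to L$ sending a suitable power of $\mathbf{A}$ to a neighbourhood of the identity and satisfying a compatibility between the group topology on $L$ and the combinatorial data on $\mathbf{A}$.

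Next I would apply the Gleason--Yamabe theorem to $L$ to extract an open subgroup $L'\le L$ and a compact normal subgroup $N\triangleleft L'$ with $L'/N$ a connected Lie group. The pullback $\pi^{-1}(N)$ will give rise (after taking the ``internal standard part'') to the finite normal subgroup $H$ appearing in the coset nilprogression. The core geometric content is then to show that the connected Lie group $L'/N$ is nilpotent of bounded step, and has bounded dimension; this is done by an escape-norm argument, using the fact that products of commutators in an approximate group remain in a bounded power, which forces the Lie algebra of $L'/N$ to have vanishing iterated brackets of bounded length. The dimension bound comes from the doubling hypothesis together with volume growth in Lie groups.

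Once we know that $L'/N$ is nilpotent of bounded rank and step, the next task is to construct the progression $P$ explicitly. The plan is to pick one-parameter subgroups $\exp(t X_1),\ldots,\exp(t X_r)$ in $L'/N$ corresponding to a Mal'cev basis of the Lie algebra, lift them to elements $u_1,\ldots,u_r$ of $\mathbf{A}$ via $\pi$, and choose side lengths $L_i$ so that the resulting nilprogression is contained in a small power $\mathbf{A}^4$ while still covering most of $\mathbf{A}$ using nilpotent coordinates. The covering of $\mathbf{A}$ by a bounded number of translates of $HP$ then follows from Ruzsa covering applied inside $\mathbf{A}^4$, using that $|HP|$ and $|\mathbf{A}|$ are comparable up to a $K$-dependent constant.

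The main obstacle, and the genuinely hard step, is the construction of the Lie model itself: producing, from purely combinatorial data (the doubling bound $|A^2|\le K|A|$), a locally compact topological group together with a homomorphism that simultaneously respects the approximate-group structure and is rich enough to apply Gleason--Yamabe. This requires Hrushovski's stabiliser theorem (or an equivalent Sanders--Croot--Sisask type argument) to produce an ``infinitesimal'' normal subgroup of the ultraproduct with definable quotient, which is the deepest ingredient. Transferring the conclusion back from the ultraproduct to individual finite $A_n$ (via \L o\'s's theorem) contradicts the assumed sequence of counterexamples and closes the argument; this transfer step is technical but essentially formal once the Lie model is in hand.
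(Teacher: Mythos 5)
This is a result that the paper \emph{cites} from Breuillard--Green--Tao (their Corollary 2.11) rather than proves; no proof appears in this paper. What the paper does offer is a high-level route: first extract the nilpotent piece without progression structure (Theorem~\ref{thm:bgt.noprog}, which it attributes to \cite{bgt} plus \cite{resid}), then apply the nilpotent Freiman theorem, Theorem~\ref{thm:nilp.Frei}, to obtain the coset (nil)progression. That two-step reduction is the approach of Breuillard's lecture notes.

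Your sketch follows the \emph{original} BGT proof, which is genuinely different: you build the ultraproduct, invoke Hrushovski's Lie model theorem and Gleason--Yamabe, prove nilpotency of the Lie model by an escape-norm argument, and then construct the nilprogression directly inside the ultraproduct via one-parameter subgroups in the Lie model. That is a legitimate route, and the paper itself acknowledges it (noting that the original BGT proof predates the nilpotent Freiman theorem and so had to be self-contained). The trade-off is roughly this: the original BGT argument produces the nilprogression in one pass by delicate analysis inside the Lie model, while the reduction route isolates the hard infinitary input into a cleaner group-theoretic statement (Theorem~\ref{thm:bgt.noprog}) and then invokes an independent, effective, purely nilpotent Freiman theorem, which is conceptually cleaner and makes the source of ineffectiveness easier to localise.

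Two points in your sketch are worth flagging, even for a sketch. First, ``lift them to elements of $\mathbf{A}$ via $\pi$'' should be ``take preimages under $\pi$'' (and this preimage step is where the exponent $4$ in $HP\subset A^4$ has to be tracked; you don't address why $4$ in particular is achievable). Second, the step ``forces the Lie algebra of $L'/N$ to have vanishing iterated brackets of bounded length'' compresses the most technical part of the argument: one actually needs the Gleason lemmas to control escape norms of commutators and thereby show a lower central series of bounded length terminates, and the dimension bound is a separate (volume-packing) argument. These are not errors, but they are the places where a reader would need the full BGT machinery to see that the sketch can be completed, so it is worth signalling explicitly that these two steps are where the bulk of the work lives, alongside the Lie model construction you already single out.

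Finally, a structural remark on the compactness framing: because the conclusion is existential with bounds quantified by $O_K(1)$ and $\ll_K 1$, you must diagonalise correctly --- for each $n$ take a counterexample $A_n$ witnessing failure with all implied constants set to $n$ --- so that a single nilprogression and covering set found in the ultraproduct transfers by \L o\'s to cofinally many $A_n$ and yields the contradiction. You gesture at this but it should be said precisely, since a naive ``no bound works'' negation does not set up the ultraproduct correctly.
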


An important way in which this result could be improved is that, as things stand, the bound on the size of $X$ is not effective. This is essentially due to the use of a non-principal ultrafilter in the proof.

There are a number of results due to various authors that remove this ineffectiveness in return for restricting to cases in which $A$ generates certain particular classes of group, such as abelian groups \cite{freiman,green-ruzsa,ruzsa.Z,ruzsa}, residually nilpotent groups \cite{nfdl,resid}, soluble groups \cite{tao.solv}, or linear groups or groups of Lie type \cite{sol.lin,unitary,bgt.lin,bgt.lin.note,gill-helf,helfgott1,helfgott2,pyb-sza,pyb-sza.2}. In the case that $A$ generates a nilpotent group, we have the following Freiman-type result of the second author.
\begin{theorem}[{\cite[Theorem 1.5]{nilp.frei}}]\label{thm:nilp.Frei}
Let $A$ be a $K$-approximate group such that $\langle A\rangle$ is $s$-step nilpotent. Then there exists a nilpotent coset progression $HP\subset A^{K^{O_s(1)}}$ of rank at most $K^{O_s(1)}$ such that $A\subset HP$.
\end{theorem}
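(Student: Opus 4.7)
My plan is to induct on the nilpotency class $s$. The base case $s=1$ is exactly the Green--Ruzsa theorem for abelian $K$-approximate groups, which produces a coset progression $HP\subset A^{K^{O(1)}}$ of rank at most $K^{O(1)}$ with $A\subset HP$.

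For the inductive step, set $G=\langle A\rangle$ and let $Z:=\gamma_s(G)$ be the last nontrivial term of the lower central series of $G$, a central (hence normal) abelian subgroup. Writing $\pi\colon G\to G/Z$ for the quotient, $\pi(A)$ is a $K$-approximate group in the $(s-1)$-step nilpotent group $G/Z$. The inductive hypothesis yields a coset nilprogression $\bar H\bar P\subset \pi(A)^m$, with $\bar P=P^\ast(\bar u_1,\ldots,\bar u_r;L_1,\ldots,L_r)$, rank $r\le K^{O_{s-1}(1)}$, exponent $m=K^{O_{s-1}(1)}$, and $\pi(A)\subset \bar H\bar P$. I then lift back to $G$: choose preimages $u_i\in A^m$ of the $\bar u_i$ and lift coset representatives of $\bar H$ to a set $H_0\subset A^m$, so that every element of $A$ is congruent modulo $Z$ to an element of $\tilde P:=H_0\cdot P^\ast(u;L)$.

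The heart of the argument is to handle the ``fibre error''. Set $B:=A^{m'}\cap Z$ for an appropriate $m'=K^{O_s(1)}$; by Plünnecke--Ruzsa-type inequalities for approximate groups, $B$ has abelian doubling polynomial in $K$, so applying Green--Ruzsa inside the abelian group $Z$ produces an abelian coset progression $H'Q\supset B$ of rank at most $K^{O(1)}$ with $H'Q\subset B^{O(1)}\subset A^{K^{O_s(1)}}$. One then assembles the desired $HP$, with $H:=H_0H'$ (arranged to be a genuine finite subgroup) and $P:=P^\ast(u_1,\ldots,u_r,q_1,\ldots,q_{r'};L_1,\ldots,L_r,L'_1,\ldots,L'_{r'})$, where the $q_j$ generate $Q$. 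Centrality of $Z$ forces the $q_j$ to commute with the $u_i$, so $HP$ is genuinely a coset nilprogression of rank $r+r'\le K^{O_s(1)}$. Containment $A\subset HP$ then follows by combining $\pi(A)\subset \bar H\bar P$ with the observation that every element of $A\tilde P^{-1}\cap Z$ lies in $B\subset H'Q$, while $HP\subset A^{K^{O_s(1)}}$ is immediate from the bounds on each constituent piece.

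I expect the main obstacles to be threefold. First, the lift of $\bar H$ to an honest subgroup of $G$ is delicate: $\pi^{-1}(\bar H)$ contains the possibly infinite $Z$ and so cannot be taken wholesale, and one must instead construct a finite transversal-type subgroup inside $A^{K^{O_s(1)}}$. Second, one needs a Plünnecke--Ruzsa-type estimate controlling $|B^2|/|B|$ for $B=A^{m'}\cap Z$; such ``intersection with a subgroup'' estimates are standard for approximate groups but require a clean formulation here, and this is really the step that cashes in the approximate group hypothesis on the nose. Third, one must carefully track the rank and containment exponents to ensure they remain polynomial in $K$ with an exponent depending only on $s$, so that the induction does not undergo geometric blow-up.
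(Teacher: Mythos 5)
The paper does not in fact prove Theorem~\ref{thm:nilp.Frei}: it is quoted verbatim from \cite[Theorem 1.5]{nilp.frei} and used as a black box, so there is no internal proof in this paper to compare against. What can be assessed is whether your proposal, read on its own terms, is sound — and it is not, for the reason you yourself half-identify.

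The skeleton (induct on step, quotient by $Z=\gamma_s(G)$, apply the inductive hypothesis to $\pi(A)\subset G/Z$, control the central fibre with an intersection lemma and abelian Green--Ruzsa, adjoin the fibre generators as new central generators) is a reasonable first guess, and the intersection estimate for $B=A^{m'}\cap Z$ you invoke is indeed standard. The fatal gap is the construction of the finite subgroup $H$. You set $H=H_0H'$, where $H_0$ is a set of lifted coset representatives of $\bar H$ and $H'$ comes from Green--Ruzsa on the fibre, and then write ``(arranged to be a genuine finite subgroup).'' That parenthetical is precisely where the content of the theorem lives, and there is no reason such an arrangement should be possible. The extension $1\to Z\to\pi^{-1}(\bar H)\to\bar H\to 1$ is a central extension that in general does not split, so there may be no subgroup of $G$ mapping isomorphically onto $\bar H$; even when a complement exists, there is no reason it should lie in $A^{K^{O_s(1)}}$, since $|\bar H|$ is not bounded polynomially in $K$ by the inductive hypothesis. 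Nor can $H'$ be expected to absorb the $2$-cocycle defect of an arbitrary set-theoretic lift $H_0$, because those cocycle values live in $Z\cap A^{O(|\bar H|)}$ with the same uncontrolled exponent. You flag this as ``delicate'' and say one ``must construct a finite transversal-type subgroup,'' but naming the obstacle is not the same as overcoming it, and I do not see how to patch it within the induction as you have set it up. Handling the finite normal subgroup (equivalently, the torsion) is exactly what distinguishes \cite{nilp.frei} from the earlier torsion-free result of Breuillard--Green \cite{bg}, and a correct proof needs a genuine idea here rather than a parenthesis.
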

We define the term \emph{nilpotent coset progression} in Section \ref{sec:nilp-progs}. A nilpotent coset progression is an object strongly analogous to a coset nilprogression -- indeed, it is shown in \cite[Proposition C.1]{nilp.frei} that the two are essentially interchangable in the context of approximate groups -- and so it will do little harm for the reader to substitute mentally `coset nilprogression' for `nilpotent coset progression' in Theorem \ref{thm:nilp.Frei} and throughout this introduction.

One can essentially reduce Theorem \ref{thm:bgt.partial} to Theorem \ref{thm:nilp.Frei} by proving the following intermediate result.
\begin{theorem}[Breuillard--Green--Tao, simple form]\label{thm:bgt.noprog}
Let $A$ be a $K$-approximate group. Then there exists a group $\Gamma<\langle A\rangle$ with normal subgroup $H\subset A^4$ such that $\Gamma/H$ is nilpotent of step at most $O_K(1)$, and a $K^3$-approximate group $B\subset A^2\cap\Gamma$ and a set $X\subset\langle A\rangle$ with $|X|\ll_K1$ such that $A\subset XB$.
\end{theorem}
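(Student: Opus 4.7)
The plan is to deduce this directly from Theorem~\ref{thm:bgt.partial}. That result, applied to the $K$-approximate group $A$, furnishes a coset nilprogression $HP\subset A^4$ of rank and step at most $O_K(1)$ together with a set $X_0\subset\langle A\rangle$ of size $\ll_K 1$ satisfying $A\subset X_0HP$. I set $\Gamma:=\langle HP\rangle=\langle H,u_1,\dots,u_r\rangle$, a subgroup of $\langle A\rangle$. Since $H$ is normalised by the $u_i$ (and trivially by itself) it is normal in $\Gamma$, and since the $u_i$ generate an $s$-step nilpotent group modulo $H$ with $s=O_K(1)$, the quotient $\Gamma/H$ is nilpotent of step $O_K(1)$. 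Moreover $H\subset HP\subset A^4$, so the pair $(\Gamma,H)$ already meets the first two requirements of the statement.

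Next, I take $B:=A^2\cap\Gamma$, which is automatically symmetric and contains the identity. To verify that $B$ is a $K^3$-approximate group, I iterate the covering relation $A^2\subset X_1A$ (with $|X_1|\le K$) to produce a set $Y\subset\langle A\rangle$ of size at most $K^3$ with $A^4\subset YA$. For each $y\in Y$ satisfying $yA\cap\Gamma\ne\emptyset$ I choose $b_y\in yA\cap\Gamma$ and write $b_y=ya$; then $yA=b_ya^{-1}A\subset b_yA^2$, and intersecting with $\Gamma$ yields $yA\cap\Gamma\subset b_y(A^2\cap\Gamma)=b_yB$. Since $B^2\subset A^4\cap\Gamma\subset YA\cap\Gamma$, this covers $B^2$ by at most $K^3$ left translates of $B$, as required.

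Finally, to build $X$, for each $x\in X_0$ with $xHP\cap A\ne\emptyset$ I pick $a_x\in xHP\cap A$ and write $a_x=xh$ with $h\in HP$. Any $a\in A\cap xHP$ is of the form $a=xh'$ with $h'\in HP$, so $a_x^{-1}a=h^{-1}h'\in HP\cdot HP\subset\Gamma$; combined with $a_x^{-1}a\in A^{-1}A=A^2$ this gives $a_x^{-1}a\in B$ and hence $A\cap xHP\subset a_xB$. Summing over $x\in X_0$ yields $A\subset XB$ with $X:=\{a_x\}$ of size $\le|X_0|\ll_K 1$, which completes the argument. The whole proof is a routine coset-representative manipulation: the substantive input is Theorem~\ref{thm:bgt.partial} itself, and the only real care needed is in tracking the approximate-group constants and the normality of $H$ as one passes to the subgroup $\Gamma$. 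I do not anticipate any serious obstacle beyond this bookkeeping.
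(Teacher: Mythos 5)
Your coset-representative manipulations are all correct: $B=A^2\cap\Gamma$ is symmetric, contains $1$, satisfies $B^2\subset A^4\cap\Gamma\subset X_1^3A\cap\Gamma$ with $|X_1^3|\le K^3$, and each non-empty $yA\cap\Gamma$ is covered by a single translate $b_yB$ with $b_y\in\Gamma$; likewise the passage from $A\subset X_0HP$ to $A\subset XB$ via representatives $a_x\in A\cap xHP$ is sound, and $H\lhd\Gamma=\langle HP\rangle$ with $\Gamma/H$ nilpotent of bounded step follows from the definition of a coset nilprogression. However, the approach runs the paper's logic backwards. The paper introduces Theorem~\ref{thm:bgt.noprog} precisely as the \emph{ingredient} from which, together with Theorem~\ref{thm:nilp.Frei}, one recovers Theorem~\ref{thm:bgt.partial} (``One can essentially reduce Theorem~\ref{thm:bgt.partial} to Theorem~\ref{thm:nilp.Frei} by proving the following intermediate result''), and its one-line proof cites the more fundamental structural result \cite[Theorem~1.6]{bgt} together with \cite[Lemmas~2.2 \& 2.3]{resid}, not the nilprogression statement. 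Deducing Theorem~\ref{thm:bgt.noprog} from Theorem~\ref{thm:bgt.partial} is not strictly circular inside this paper, since Theorem~\ref{thm:bgt.partial} is a black-box import from \cite{bgt}, but it defeats the architecture the authors are building: they repeatedly stress that Theorem~\ref{thm:bgt.noprog} is the (currently ineffective) bottleneck, and that an effective proof of it would immediately yield effective versions of Theorems~\ref{thm:bgt} and~\ref{thm:nilp.Frei.proper}. Starting from the already-downstream, already-ineffective Theorem~\ref{thm:bgt.partial} forecloses that route. So: your derivation is valid but goes the wrong way around the dependency graph; the intended proof is a direct appeal to the simpler Breuillard--Green--Tao structure theorem plus a short reduction from \cite{resid}, and does not pass through coset nilprogressions at all.
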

\begin{proof}
This follows from \cite[Theorem 1.6]{bgt} and \cite[Lemmas 2.2 \& 2.3]{resid}.
\end{proof}
Indeed, this is precisely the approach taken to proving Theorem \ref{thm:bgt.partial} in Breuillard's lecture notes \cite{eb.minnesota}. (Note, however, that Theorem \ref{thm:bgt.partial} predates Theorem \ref{thm:nilp.Frei}, and so the original proof of Theorem \ref{thm:bgt.partial} was necessarily via a different method, and in particular implies a version of Theorem \ref{thm:nilp.Frei}, albeit one with far worse bounds.) The papers \cite{sol.lin,bgt.lin,nfdl,gill-helf,resid} also all essentially prove cases of Theorem \ref{thm:bgt.partial} by first proving effective versions of Theorem \ref{thm:bgt.noprog}, and then applying Theorem \ref{thm:nilp.Frei} (or an earlier partial result of Breuillard and Green \cite{bg} valid when $G$ is torsion-free). However, whilst this method is essentially sufficient to prove Theorem \ref{thm:bgt.partial} as stated above, there is a more detailed version of Theorem \ref{thm:bgt.partial} that contains more refined information and does not follow from Theorems \ref{thm:nilp.Frei} and \ref{thm:bgt.noprog}.

Before we can give this more detailed version of Theorem \ref{thm:bgt.partial} we need some further definitions. First, following \cite{nilp.frei}, we define the \emph{ordered progression} on generators $u_1,\ldots,u_d\in G$ with lengths $L_1,\ldots,L_d$ to be
\[
P_\ord(u;L):=\{u_1^{\ell_1}\cdots u_d^{\ell_d}:|l_i|\le L_i\}.
\]
If $P$ is an ordered progression and $H$ is a finite subgroup normalised by $P$, then we say that $HP$ is an \emph{ordered coset progression}.

Following \cite{bgt}, we say that the tuple $(u;L)=(u_1,\ldots,u_d;L_1,\ldots,L_d)$ is in \emph{$C$-upper-triangular form} if, whenever $1\le i<j\le d$, for all four choices of signs $\pm$ we have
\begin{equation}\label{eq:C-upp-tri}
[u_i^{\pm1},u_j^{\pm1}]\in P_\ord\left(u_{j+1},\ldots,u_d;\textstyle{\frac{CL_{j+1}}{L_iL_j},\ldots,\frac{CL_d}{L_iL_j}}\right).
\end{equation}
We say that a nilprogression or ordered progression is in $C$-upper-triangular form if the corresponding tuple is. We say that a coset nilprogression or ordered coset progression $HP$ is in $C$-upper-triangular form if the corresponding tuple is in $C$-upper-triangular form modulo $H$.
\begin{remark*}
In \cite{bgt} the definition of $C$-upper-triangular form in fact requires only that
\[
[u_i^{\pm1},u_j^{\pm1}]\in P^\ast\left(u_{j+1},\ldots,u_d;\textstyle{\frac{CL_{j+1}}{L_iL_j},\ldots,\frac{CL_d}{L_iL_j}}\right),
\]
which is weaker than \eqref{eq:C-upp-tri}. However, it is convenient for us to use this slightly more restrictive condition, which of course does not weaken our results at all; indeed, it strengthens them slightly.
\end{remark*}

Given $m>0$, a nilprogression or ordered progression $P$ on the tuple $(u;L)=(u_1,\ldots,u_d;L_1,\ldots,L_d)$ is said to be \emph{$m$-proper with respect to a homomorphism $\pi:\langle P\rangle\to N$} if the elements $\pi(u_1^{\ell_1}\cdots u_d^{\ell_d})$ are all distinct as the $\ell_i$ range over those integers with $|\ell_i|\le mL_i$. The progression $P$ is said to be $m$-proper with respect to a subgroup $H\lhd\langle HP\rangle$ if $P$ is $m$-proper with respect to the quotient homomorphism $\langle HP\rangle\to\langle HP\rangle/H$. In this case we also say that the coset nilprogression or ordered coset progression $HP$ is \emph{$m$-proper}. If a coset nilprogression or ordered coset progression $HP$ is $m$-proper for every $m>0$ then we say it is \emph{infinitely proper}. Note that if $HP$ is $1$-proper then
\begin{equation}\label{eq:proper.size}
|HP|\ge L_1\cdots L_d|H|.
\end{equation}

Having made these definitions, we can now state the more detailed version of Theorem \ref{thm:bgt.partial}, as follows.
\begin{theorem}[Breuillard--Green--Tao {\cite[Corollary 2.11]{bgt}}, complete statement]\label{thm:bgt}
Let $A$ be a $K$-approximate group. Then there exist an $\Omega_K(1)$-proper coset nilprogression $HP\subset A^4$, of rank and step at most $O_K(1)$ and in $O_K(1)$-upper-triangular form, and a set $X\subset\langle A\rangle$ with $|X|\ll_K1$ such that $A\subset XHP$.
\end{theorem}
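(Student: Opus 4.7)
\medskip

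\noindent\textbf{Plan.} The strategy is to obtain \emph{some} coset nilprogression approximation of $A$ from existing results and then upgrade its structural properties using the main technical result advertised in the abstract of the present paper.

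First, one applies Theorem \ref{thm:bgt.partial}, which follows directly from Theorem \ref{thm:bgt.noprog} combined with Theorem \ref{thm:nilp.Frei} (via the interchangeability of nilpotent coset progressions and coset nilprogressions recorded in \cite[Proposition C.1]{nilp.frei}). This gives a coset nilprogression $H'P' \subset A^4$ of rank and step $O_K(1)$, together with a set $X' \subset \langle A\rangle$ with $|X'|\ll_K 1$ and $A \subset X'H'P'$. At this stage nothing is known about properness or upper-triangular form.

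Second, one applies the main theorem of the present paper --- that an arbitrary coset nilprogression can be approximated by an infinitely proper coset nilprogression in upper-triangular form --- to $H'P'$. This produces a coset nilprogression $HP$ that is $\Omega_K(1)$-proper, of rank and step $O_K(1)$, and in $O_K(1)$-upper-triangular form, together with a bounded two-sided covering between $HP$ and $H'P'$: each lies in a union of $O_K(1)$ translates of the other.

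Third, one converts this bounded covering into the two conclusions required by Theorem \ref{thm:bgt}. To arrange $HP \subset A^4$ one shrinks the side lengths of $P$ by factors depending only on $K$; since the defining inequalities of $O_K(1)$-upper-triangular form and of $\Omega_K(1)$-properness are invariant under uniform rescalings of the side lengths (up to adjusting the implicit constants), this shrinking preserves both properties, and forces the resulting $HP$ inside $H'P' \subset A^4$. To recover the covering relation $A \subset XHP$ with $|X| \ll_K 1$ one absorbs into $X$ both $X'$ and the bounded set of translates needed to cover $H'P'$ by translates of $HP$.

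The main obstacle is of course the second step: producing a proper, upper-triangular coset nilprogression close to a given one without blowing up the rank, step, or bounded-covering constants. This is the central technical content of the paper, and it is achieved by a geometry-of-numbers argument carried out in the nilpotent Lie algebra associated to a Mal'cev-type extension of the quotient nilprogression, as indicated in the abstract.
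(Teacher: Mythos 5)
This statement is not proved in the paper: it is quoted verbatim as \cite[Corollary 2.11]{bgt}, and the paper explicitly distinguishes it from what its own methods yield. The text preceding Theorem~\ref{thm:bgt} stresses that the ``more detailed version of Theorem~\ref{thm:bgt.partial}\ldots does not follow from Theorems~\ref{thm:nilp.Frei} and~\ref{thm:bgt.noprog}'', and the only thing the paper says about deriving Theorem~\ref{thm:bgt} from its own results is the qualified remark that Theorem~\ref{thm:nilp.Frei.proper} ``implies in particular an effective version of Theorem~\ref{thm:bgt}'' --- an effective version with weaker containments, not the theorem as stated.

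Your third step does not work, and this is the crux. The paper's machinery (Theorem~\ref{thm:nilp.bilu}, or equivalently Theorem~\ref{thm:nilp.Frei.proper}) produces a proper coset (nil)progression $HP$ satisfying $H'P'\subset XHP\subset (H'P')^{O_{K,m,C}(1)}$, so the containment of $HP$ in a power of $A$ that you get to start from is $HP\subset A^{O_{K,m,C}(1)}$ --- not $A^4$, and the exponent even depends on the desired properness $m$ and upper-triangularity constant $C$ (compare the containment $H+P\subset O_{K,m}(1)A$ in the abelian Theorem~\ref{thm:ab.Bilu}, which is strictly weaker than $H+P\subset 4A$). Shrinking the side lengths of $P$ by a factor $k$ gives a \emph{subset} of $HP$, so it still lies in $A^{O_{K,m,C}(1)}$; it does not move you into a lower power of $A$. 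Nothing in the paper provides the inequality ``$HP_{1/k}\subset H'P'$'' for a bounded $k$, and in general it is false: $HP$ can protrude from $H'P'$ in a direction where $HP_{1/k}$ still protrudes. And even if one could force containment by shrinking aggressively, the factor $k$ would then depend on the geometry of $HP$ versus $H'P'$ rather than only on $K$, so the bounded covering $A\subset X'HP_{1/k}$ with $|X'|\ll_K 1$ would be lost.

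The sharp $HP\subset A^4$ is precisely the part of Theorem~\ref{thm:bgt} that comes from Breuillard--Green--Tao's ultrafilter argument and Hrushovski's Lie model theorem (Theorem~\ref{thm:hrush}), and it is not recoverable by the finitary Bilu-style argument developed here. If you only aim for a statement with $A^4$ replaced by $A^{O_{K,m,C}(1)}$, then essentially your plan (Theorem~\ref{thm:bgt.noprog} to reduce to the nilpotent case, then Theorem~\ref{thm:nilp.Frei.proper}) is exactly what the paper's remark envisions; but that is a weaker theorem than the one stated.
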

\begin{remarks*}
The complete statement of \cite[Corollary 2.11]{bgt} also contains a statement about the cardinality of $P$ compared to its side lengths but, as is remarked in \cite{bgt}, this is already essentially implied by Theorem \ref{thm:bgt} as stated above.

Tao \cite[Proposition 3.1]{tao.growth} has shown that given $m>0$ the coset nilprogression in Theorem \ref{thm:bgt} can, at the expense of worsening some of the other implied constants, be taken to be $m$-proper.
\end{remarks*}

Many applications of the results described above do not actually need the full strength of Theorem \ref{thm:bgt}. For example, even Theorem \ref{thm:bgt.noprog} is enough to prove Gromov's polynomial-growth theorem (see \cite[Corollary 11.7]{bgt}). Nonetheless, there are certain applications, such as those of \cite{bt,tao.growth} and Theorem \ref{thm:persitstence.poly.growth}, below, where the properness and upper-triangular form of Theorem \ref{thm:bgt} play a significant role.

\bigskip

\noindent\textsc{Principal new results.} The main purpose of this paper is to obtain properness and upper-triangular form of the nilprogression in Theorem \ref{thm:nilp.Frei}, as follows.

\begin{theorem}\label{thm:nilp.Frei.proper}
Let $A$ be a $K$-approximate group such that $\langle A\rangle$ is $s$-step nilpotent. Then for every $m,C>0$ there exist an $m$-proper ordered coset progression $HP\subset A^{O_{K,m,C}(1)}$, of rank at most $K^{O_s(1)}$ and in $C$-upper-triangular form, and a set $X\subset\langle A\rangle$ with $|X|\ll_{K,s}1$ such that $A\subset XHP$.
\end{theorem}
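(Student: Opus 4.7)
The plan is to deduce this theorem from Theorem \ref{thm:nilp.Frei} together with the main technical result announced in the paper's abstract: a nilpotent version of Bilu's theorem that approximates an arbitrary (coset) nilprogression by a proper one in upper-triangular form.

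First I would apply Theorem \ref{thm:nilp.Frei} to $A$, producing a nilpotent coset progression $H_0 P_0$ with $A \subset H_0 P_0 \subset A^{K^{O_s(1)}}$, of rank at most $K^{O_s(1)}$ and step at most $s$. All the required parameters except properness and upper-triangular form are already in place at this point.

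Next I would feed $H_0 P_0$ into the paper's nilpotent Bilu-type theorem with parameters $m$ and $C$. This should produce an $m$-proper ordered coset progression $HP$ in $C$-upper-triangular form of comparable rank and step, with $HP$ and $H_0 P_0$ mutually contained in bounded powers of each other, the bound depending on $m$, $C$ and the rank and step. Chaining containments gives $HP \subset A^{O_{K,m,C}(1)}$ and rank at most $K^{O_s(1)}$ as required. For the covering statement, one has $A \subset H_0 P_0$ directly, and a standard covering argument for ordered coset progressions of bounded rank and step shows that $H_0 P_0$ lies in $O_{K,s}(1)$ translates of $HP$, with the covering multiplicity independent of $m$ and $C$; this gives the desired set $X$.

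The difficult step is the nilpotent Bilu theorem itself. In the abelian case one applies Minkowski's second theorem to the lattice defined by the original progression and reads off a new basis from the successive minima, automatically producing properness. In the nilpotent setting I would lift to the Mal'cev Lie algebra via the logarithm map and run a geometry-of-numbers reduction inductively along the lower central series. The technical headache is that the new basis must serve two competing objectives: it should be compatible with the lower-central-series filtration so that, via Baker--Campbell--Hausdorff, the commutator inclusions \eqref{eq:C-upp-tri} defining $C$-upper-triangular form hold; and simultaneously it should be short enough, for a suitable norm encoding the side lengths, that the resulting ordered progression is $m$-proper. Reconciling these two demands iteratively, while keeping the Lie-algebra norms uniformly controlled as one descends the filtration, is where I would expect the bulk of the technical effort to lie.
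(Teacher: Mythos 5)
Your high-level approach matches the paper's: Theorem \ref{thm:nilp.Frei.proper} is indeed deduced by applying Theorem \ref{thm:nilp.Frei} and then the paper's nilpotent Bilu-type result (Theorem \ref{thm:nilp.bilu}), whose proof proceeds, as you sketch, by lifting to the Mal'cev Lie algebra and running a geometry-of-numbers argument there (Sections \ref{sec:coords}--\ref{sec:Bilu}).

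The one inaccuracy concerns how the covering set $X$ arises. You assert that the Bilu-type theorem should yield $HP$ and $H_0P_0$ ``mutually contained in bounded powers of each other'', and then plan to derive $X$ afterwards by a covering argument. In fact one cannot have $H_0P_0\subset HP^{O(1)}$ in general: the properification process iteratively quotients by lattice directions and passes to bounded-index sublattices (Lemmas \ref{lem:dilates.subgroup} and \ref{lem:coset.reps}), so translates are genuinely needed to recover the original progression. Accordingly, Theorem \ref{thm:nilp.bilu} is stated with the set $X$ already built in, $P_0\subset XHP\subset P_0^{O_{r,s,C,m}(1)}$ with $|X|\ll_{r,s}1$. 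The crucial point, which your proposal asserts but does not justify, is that $|X|$ is independent of $m$ and $C$. In the paper this drops out of the construction because $X$ is produced by Lemma \ref{lem:coset.reps} from coset representatives for sublattices of index $Q_i\ll_{r,s}1$, whereas a post-hoc Ruzsa-type covering along the lines you suggest would naturally give a bound depending on the doubling constant of $HP$, which by Corollary \ref{cor:prog.doub} itself depends on $m$ and $C$. So the $m,C$-independent bound $|X|\ll_{K,s}1$ really does require the covering to be manufactured inside the Bilu iteration rather than appended afterwards.
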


\begin{remark}Our arguments lead to results expressing approximate groups in terms of ordered progressions in upper-triangular form, rather than nilprogressions. We show in Section \ref{sec:ordered.v.nil} that proper ordered progressions in upper-triangular form always have small doubling, and so it is natural that they should arise in the study of approximate groups. Moreover, all of our results can be converted to be in terms of nilprogressions, in line with the existing literature, since it follows directly from \cite[Proposition C.1]{nilp.frei} that if $(u_1,\ldots,u_d;L_1,\ldots,L_d)$ is in $C$-upper-triangular form then $P_\ord(u;L)\subset P^\ast(u;L)\subset P_\ord(u;L)^{O_d(1)}$.
\end{remark} 

\begin{remark}Theorem \ref{thm:nilp.Frei.proper} is in principle effective, and implies in particular an effective version of Theorem \ref{thm:bgt} for any group that has an effective version of Theorem \ref{thm:bgt.noprog}. It also means that a fully general effective proof of Theorem \ref{thm:bgt.noprog} would yield an effective version of Theorem \ref{thm:bgt} as an immediate corollary.
\end{remark}

\bigskip

\noindent\textsc{Overview of the argument.} When $A\subset\Z$, \cref{thm:nilp.Frei,thm:nilp.Frei.proper} are essentially the classical Freiman--Ruzsa theorem \cite{freiman,ruzsa.Z}. See also Bilu \cite[\S3]{bilu} for a refinement of \cref{thm:nilp.Frei} to Theorem \ref{thm:nilp.Frei.proper} in this case. The approach of this refinement is to take the progression given by \cref{thm:nilp.Frei} and then to transform it into an $m$-proper progression (the upper-triangular form condition is vacuous in the abelian setting).

Let us give a very brief overview of this argument. One starts with the observation that an abelian progression $P$ of rank $r$ is the image of a box $B\subset\Z^r$ under a homomorphism $\pi:\Z^r\to\Z$. If $P$ is not $m$-proper then that means that there is some point $x\in2mB$ such that $\pi(x)=0$, and so $P$ is also the homomorphic image of some lattice convex body $B'\subset\Z^r/\langle x\rangle$. One then uses some geometry of numbers to show that $B'$ can be efficiently contained in a box $B''\subset\Z^r/\langle x\rangle$, and so $P$ is efficiently contained in the homomorphic image of a box of dimension $r-1$, which is by definition a progression of dimension $r-1$. Theorem \ref{thm:nilp.Frei.proper} for $G=\Z$ then follows by induction.

The central strategy of the present paper is to run a similar argument in the nilpotent case. We note in Remark \ref{rem:free.image} that a nilpotent progression is the homomorphic image of a certain type of box in a lattice in a nilpotent Lie group, and then in Sections \ref{sec:geom-of-nos} and \ref{sec:gen.of.progs} we adapt the geometry-of-numbers argument described above to this setting.

This approach ultimately yields the following result, which we prove in Section \ref{sec:Bilu} (in fact, we prove Theorem \ref{thm:nilp.bilu.detailed}, which is a slightly more detailed version of Theorem \ref{thm:nilp.bilu}).
\begin{theorem}\label{thm:nilp.bilu}
Let $P_0$ be a nilpotent progression of rank $r$ and step $s$. Then for every $m,C>0$ there exists an $m$-proper ordered coset progression $HP$, of total rank at most $O_{r,s}(1)$ and in $C$-upper-triangular form, and a set $X\subset\langle P_0\rangle$ with $|X|\ll_{r,s}1$ such that $P_0\subset XHP\subset P_0^{O_{r,s,C,m}(1)}$.
\end{theorem}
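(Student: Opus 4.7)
The plan is to adapt Bilu's abelian argument to the nilpotent setting, using the fact, noted by the authors in Remark \ref{rem:free.image}, that a nilpotent progression $P_0$ of rank $r$ and step $s$ can be realised as the image, under a surjective homomorphism $\pi : \Gamma_{r,s} \to \langle P_0\rangle$, of a box $B$ inside the free nilpotent group $\Gamma_{r,s}$ of rank $r$ and step $s$. Here $\Gamma_{r,s}$ embeds as a uniform lattice in a simply connected nilpotent Lie group $N_{r,s}$ whose Lie algebra $\n_{r,s}$ has dimension $D(r,s)=O_{r,s}(1)$. Via exponential coordinates adapted to the lower central series, the box $B$ can be identified with a lattice convex body in $\n_{r,s}$, and the $L_i$-direction inherits a natural weight equal to the level at which the generator $u_i$ lives in the lower central series; this weighting is what will ultimately produce the upper-triangular form.

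I would proceed by induction on the rank $D$ of the ambient nilpotent Lie algebra (or more precisely on a lexicographic statistic built from the side-lengths at each filtration weight), with the base case being a rank-$D$ ordered progression that is already $m$-proper and in $C$-upper-triangular form. If the candidate ordered progression in $\Gamma_{r,s}$ fails to be $m$-proper with respect to $\pi$, then by definition there is a non-trivial element $x$ in $\ker\pi$ lying in a controlled dilate of $B$. One would then factor out $x$: since $\ker\pi$ is normal in $\Gamma_{r,s}$, the normal closure $\langle\langle x\rangle\rangle$ is contained in $\ker\pi$, so $\pi$ factors through $\Gamma_{r,s}/\langle\langle x\rangle\rangle$. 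The resulting quotient is again a lattice in a nilpotent Lie group, now of strictly smaller dimension, and the image of $B$ is a lattice convex body there.

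The heart of the argument is then the nilpotent geometry-of-numbers step that occupies Sections \ref{sec:geom-of-nos} and \ref{sec:gen.of.progs} of the paper: one must show that this convex body in the quotient lattice can be efficiently covered by a box that respects the filtration, i.e.\ by an ordered progression whose generators sit in successive pieces of the lower central series and whose side-lengths satisfy the $C$-upper-triangular estimate \eqref{eq:C-upp-tri}. This is the analogue of Bilu's reduction of a rank-$r$ arithmetic progression to a rank-$(r-1)$ one after killing a direction; the nilpotent version requires Minkowski-style arguments in each graded piece $\n_{r,s}^{(i)}/\n_{r,s}^{(i+1)}$ together with careful bookkeeping of the commutator relations that couple the pieces. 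Replacing $B$ by the covering box and iterating, the induction must terminate after at most $D(r,s)=O_{r,s}(1)$ steps, giving an $m$-proper ordered progression $P$ in $C$-upper-triangular form with $P_0 \subset P \subset P_0^{O_{r,s,C,m}(1)}$; at each stage the set $X$ absorbing the Bilu-style covering cost has cardinality $\ll_{r,s}1$, and these combine multiplicatively over the $O_{r,s}(1)$ steps to give the required bound on $|X|$. The subgroup $H$ appears at the end from the torsion of the terminal quotient group (which remains of rank $O_{r,s}(1)$), producing an ordered coset progression rather than a bare ordered progression.

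The main obstacle I expect is precisely the geometry-of-numbers step in the graded nilpotent Lie algebra: in the abelian case one quotients by a single vector and the reduction of a symmetric convex body to a box is classical, but in the nilpotent case the candidate element $x$ has non-trivial components across several weights of the lower central series, and killing it forces simultaneous reductions in several graded pieces. Controlling this so that the new side-lengths still satisfy \eqref{eq:C-upp-tri}, and so that the geometric inflation at each inductive step is bounded by a constant depending only on $r,s,C,m$ (rather than accumulating superexponentially over the $O_{r,s}(1)$ iterations), is the delicate point; this is exactly what the tailored nilpotent Minkowski-type lemmas of Sections \ref{sec:geom-of-nos} and \ref{sec:gen.of.progs} are designed to do.
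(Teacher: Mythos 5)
Your high-level plan (realise $P_0$ as a projected box in a free nilpotent Lie group, then run a Bilu-style iteration that kills improper directions one at a time until the box becomes proper) matches the paper's strategy, and your concern that the geometry-of-numbers step is the delicate point is well placed. However, there is a genuine gap at the key inductive step, and it is exactly at the point you flag as the main obstacle.

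You propose that when the box fails to be $m$-proper, one extracts a nontrivial $x\in\ker\pi$ in a controlled dilate of $B$ and then factors through $\Gamma_{r,s}/\langle\langle x\rangle\rangle$. This is the wrong thing to quotient by. If $x$ is not central, the normal closure $\langle\langle x\rangle\rangle$ can have Hirsch length much greater than $1$, so the Lie algebra drops by more than one dimension at once, and Bilu's volume lemma (Lemma \ref{lem:rom.5}), which is tailored to a projection along a \emph{single} vector, no longer controls the inflation. Your proposed remedy (``Minkowski-style arguments in each graded piece together with careful bookkeeping of the commutator relations'') is precisely the thing you have not shown how to do, and it is not what the paper does either. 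The paper's crucial step, which your proposal omits, is Lemma \ref{lem:collision}: starting from the collision element $x\in\ker\pi$, repeatedly replace $x$ by $[x,\exp e_i]$ for some $i$ with $[x,\exp e_i]\neq 1$. Because the group is nilpotent and each commutator stays inside a controlled dilate of the box (by the upper-triangular form and Lemmas \ref{lem:upper-tri.dilate} and \ref{lem:QL.group}), after at most $d$ iterations this produces a nonzero \emph{central} element $z\in B_\Z(e;O_{C,d,m}(L))\cap\z(\g)$ with $\exp z\in\ker\pi$. One then quotients by the one-dimensional real subspace $z_\R$ of the Lie algebra (Proposition \ref{prop:biluGroup}), not by a normal closure in the discrete group. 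This reduces the dimension by exactly one, keeps the quotient a genuine simply connected nilpotent Lie group with a lattice in it, and makes the covering step (Proposition \ref{prop:bilu}, via Proposition \ref{prop:nilp.box}) a direct analogue of the abelian Bilu argument.

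Two smaller inaccuracies: the set $X$ in Theorem \ref{thm:nilp.bilu} does not accumulate multiplicatively over the Bilu iterations; it arises once, from Lemma \ref{lem:coset.reps}, as a set of coset representatives when passing from $(e_i)$ to $(Q_ie_i)$ to make $\exp\langle Q_ie_i\rangle$ a group. Likewise, $H$ is not simply ``the torsion of the terminal quotient''; it is built up iteratively as the pullback to $\langle P_0\rangle$ of the images of $\Gamma_j\cap\exp z_\R$ at each stage of the recursion in Proposition \ref{prop:biluGroup}.
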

\begin{proof}[Proof of Theorem \ref{thm:nilp.Frei.proper}]
Theorem \ref{thm:nilp.Frei.proper} follows immediately from Theorems \ref{thm:nilp.Frei} and \ref{thm:nilp.bilu}.
\end{proof}

\bigskip\noindent\textsc{The abelian case.} In the abelian case our argument yields a slightly stronger statement than Theorem \ref{thm:nilp.Frei.proper}, as follows.

\begin{theorem}\label{thm:ab.Bilu}
Let $A$ be a $K$-approximate group such that $\langle A\rangle$ is abelian. Then for every $m>0$ there exists an $m$-proper coset progression $HP$ of rank at most $K^{O(1)}$ such that 
\[
A\subset HP\subset A^{O_{K,m}(1)}.
\]
\end{theorem}

We prove Theorem \ref{thm:ab.Bilu} at the end of Section \ref{sec:Bilu}. Note that it slightly strengthens the Freiman--Ruzsa theorem stated in \cite{bilu} even in the torsion-free case, since in place of the cardinality bound $|HP|\le O_{K,m}(1)|A|$ we have the qualitatively stronger containment $HP\subset A^{O_{K,m}(1)}$. This last strengthening arises from our use of Proposition \ref{prop:rom.8}. The fact that we are also able to generalise to the setting of groups with torsion is ultimately thanks to Green and Ruzsa, who proved the earliest version of \cref{thm:nilp.Frei} for an arbitrary abelian group \cite{green-ruzsa}.

\bigskip\noindent\textsc{Principal applications.}
One of our main motivations for proving Theorem \ref{thm:nilp.Frei.proper} is that much of the material is useful in a forthcoming paper \cite{scaling.limits} in which we sharpen a result of Benjamini, Finucane and the first author \cite[Theorem 3.2.2]{bft}. That result states that if $(\Gamma_n)$ is a sequence of vertex-transitive graphs with discrete automorphism groups and the balls $B_n$ of radius $n$ in the $\Gamma_n$ satisfy $|B_n|\ll n^D$ then for every sequence $m_n\gg n$ the sequence $(\Gamma_{n},\frac{d_{\Gamma_{n}}}{m_n})$ is relatively compact for the Gromov--Hausdorff topology. It also states that every limit point of $(\Gamma_{n},\frac{d_{\Gamma_{n}}}{m_n})$ is a connected nilpotent  Lie  group  equipped  with  a  left-invariant  Carnot-Caratheodory
metric. In our forthcoming paper \cite{scaling.limits} we give a new proof of this, removing the need to assume that the automorphism groups are discrete and showing moreover that the homogeneous dimension of every limit point is at most $D$.

In the present paper we give a related application to sets of polynomial growth. The Breuillard--Green--Tao proof of Gromov's theorem via Theorem \ref{thm:bgt.noprog} yields as a corollary the following result.
\begin{theorem}[Breuillard--Green--Tao {\cite[Corollary 11.9]{bgt}}]
Given $D>0$ there exists $N=N_D$ such that if $n\ge N$ and $S$ is a finite symmetric generating set for a group $G$ such that
\begin{equation}\label{eq:tao.poly.growth}
|S^n|\le n^D|S|
\end{equation}
then for every $r\ge n$ we have $|S^r|\le r^{O_D(1)}|S|$.
\end{theorem}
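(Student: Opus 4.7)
The plan is to reduce the polynomial-growth hypothesis at scale $n$ to bounded doubling at some intermediate scale, invoke Theorem~\ref{thm:bgt.partial} to expose a virtually nilpotent structure of bounded complexity inside $\langle S\rangle$, and then read off polynomial growth at all subsequent scales via a standard growth bound for virtually nilpotent groups.

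For the first step, let $\ell:=\lceil\log_2 n\rceil$ and consider the dyadic sequence $|S|,|S^2|,|S^4|,\ldots,|S^{2^\ell}|$. The telescoping product of consecutive ratios equals $|S^{2^\ell}|/|S|\le n^D\cdot \text{const}$, so at least one ratio $|S^{2^i}|/|S^{2^{i-1}}|$ is at most $(n^D)^{1/\ell}\le 2^{D+1}$. Setting $k:=2^{i-1}\le n$, the set $A:=S^k$ has doubling at most $2^{D+1}$. Standard Plünnecke--Ruzsa estimates together with Tao's covering lemma \cite{tao.product.set} then replace $A$ with a $K$-approximate group $A'\subset S^{O_D(k)}$, with $K=K(D)$ and $|A'|\asymp|A|$; the implicit lower bound $n\ge N_D$ is exactly what is needed for this conversion to succeed.

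Applying Theorem~\ref{thm:bgt.partial} to $A'$ yields a coset nilprogression $HP\subset (A')^{4}$ of rank and step at most $O_D(1)$ and a set $X\subset\langle S\rangle$ with $|X|\ll_D 1$ such that $A'\subset XHP$. Since $1\in S$ we have $S\subset A'\subset XHP$, so $\langle S\rangle=\langle X\cup HP\rangle$ and in particular $[\langle S\rangle:\langle HP\rangle]\ll_D 1$. Replacing $\langle HP\rangle$ by its normal core in $\langle S\rangle$ (still of index $\ll_D 1$) produces a normal subgroup $N\lhd\langle S\rangle$ of index $\ll_D 1$ that is virtually $O_D(1)$-step nilpotent, with the abelian ranks in its nilpotent filtration bounded by $O_D(1)$ as well. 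The Bass--Guivarc'h formula for such groups then yields $|S^r|\le r^{O_D(1)}|S|$ for every $r\ge 1$, and in particular for every $r\ge n$, as required.

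The main technical wrinkle is the standard one of converting bounded doubling of $S^k$ into an honest approximate group structure to which Theorem~\ref{thm:bgt.partial} can be applied, and then verifying that the coset representatives in $X$ do not destroy the virtual nilpotence when one passes from $\langle HP\rangle$ up to $\langle S\rangle$. Both points are classical and are carried out explicitly in the argument of \cite[\S11]{bgt}; the novelty of the present paper lies rather in sharpening the conclusion so that the exponent $O_D(1)$ can be taken equal to $D$ itself, which is the content of Theorem~\ref{thm:persitstence.poly.growth} and requires Theorem~\ref{thm:nilp.Frei.proper} as input.
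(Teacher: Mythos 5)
The statement you are proving is cited in the paper from \cite[Corollary 11.9]{bgt} without proof; the closest the paper itself comes is the (stronger) Proposition~\ref{prop:inverse''}, which is proved via \cite[Proposition 2.9]{bt} rather than via the route you take. Your overall plan (pigeonhole to bounded doubling, invoke the Breuillard--Green--Tao structure theorem, deduce virtual nilpotence, apply Bass--Guivarc'h) is close in spirit to the original argument in \cite[\S 11]{bgt}, but as written it has two genuine gaps.

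First, your pigeonhole only controls the \emph{doubling} ratio $|S^{2k}|/|S^k|$. In the non-abelian setting bounded doubling of a single set does not control its tripling, and it is the tripling constant $|A^3|/|A|$ that is needed to convert $S^k$ into an honest approximate group via \cite{tao.product.set} (the relevant counterexamples are of the form ``large subgroup union a free generator''). One needs the base-$q$ pigeonhole that simultaneously controls $|S^{qk}|/|S^k|$ for a fixed $q\ge 3$ or $5$; this is exactly what Lemma~\ref{lem:poly.pigeon} in the paper records, and it costs nothing beyond a slightly worse exponent in $D$.

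Second, the step ``$\langle S\rangle=\langle X\cup HP\rangle$ and in particular $[\langle S\rangle:\langle HP\rangle]\ll_D 1$'' does not follow. A finite set $X$ together with a subgroup $\langle HP\rangle$ can generate a group in which $\langle HP\rangle$ has infinite index (take $HP$ trivial and $X$ containing an element of infinite order). To obtain the bounded-index conclusion you must propagate the covering $S^k\subset XHP$ to \emph{all} powers $S^{rk}$ with a uniformly bounded set of translates, which is again where the tripling control and the fact that $HP$ is an approximate group are used. This is precisely the content of \cite[Lemma 2.7 and Proposition 2.9]{bt}, which the present paper invokes: one first passes to a subgroup $\Gamma$ and obtains the containment $S^{rk}\subset XA^r$ uniformly in $r$, and the polynomial growth bound for $S^{rn}$ is then read off directly from the growth of the progression (as in Lemma~\ref{lem:lengths.powers}) rather than by appealing to Bass--Guivarc'h for the ambient group. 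Your appeal to Bass--Guivarc'h is not wrong in itself, but the hypothesis of bounded index that it requires is exactly what your argument has not established.

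To summarise: the high-level strategy is correct and matches \cite[\S 11]{bgt}, but both the ``bounded doubling suffices'' claim and the ``bounded index'' claim need to be replaced by the standard, more careful lemmas (higher-power pigeonhole plus iterated covering), which are what actually make the argument go through and which the paper packages as Lemma~\ref{lem:poly.pigeon} and Proposition~\ref{prop:inverse''}.
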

Thus, if $S$ exhibits polynomial growth of degree $D$ at some sufficiently large scale $n$ then it exhibits polynomial growth of degree bounded in terms of $D$ at all subsequent scales. Benjamini (private communication) has conjectured that if one replaces \eqref{eq:tao.poly.growth} with the more restrictive condition $|S^n|\le Mn^D$ then one should be able to conclude that $S$ exhibits polynomial growth of the \emph{same} degree $D$ at all subsequent scales. In Section \ref{sec:conj-1} we verify this conjecture, arriving at the following result, in which we write $\{D\}=1-\lfloor D\rfloor$, the fractional part of $D$.
\begin{theorem}[Benjamini's conjecture]\label{thm:persitstence.poly.growth}
Given $M,D>0$ there exists $N=N_\cD$ such that if $n\ge\max\{N,NM,(NM)^\frac{1}{1-\{D\}}\}$ and $S$ is a finite symmetric generating set for a group $G$ such that $1\in S$ and
\begin{equation}\label{eq:persist.poly.growth}
|S^n|\le Mn^D
\end{equation}
then for every $r\ge n$ we have $|S^r|\ll_{M,D}r^D$; indeed $|S^r|\ll_\cD(r/n)^{\lfloor D\rfloor}|S^n|$.
\end{theorem}
Having some constants depending on $\cD$ rather than $D$ might look slightly strange, and in some sense does not contain much information. However, it is a convenient means by which to capture the fact that the bounds are uniform on a bounded range of $D$, which will be useful for an application in another forthcoming paper.

The basic approach to \cref{thm:persitstence.poly.growth}, which is already present in \cite{bt,tao.growth}, is to control the growth of $S^r$ in terms of the growth of a certain nilprogression of bounded rank and step.

\begin{remark*} If $G$ is assumed to be abelian then the weaker assumption \eqref{eq:tao.poly.growth} is enough to draw the same conclusion. To see that \cref{thm:persitstence.poly.growth} does not hold with  \eqref{eq:tao.poly.growth} in general, consider the set
\[
S=\left(\begin{smallmatrix}1 & [-n,n] & [-n^3,n^3]\\0&1&[-n,n]\\0&0&1\end{smallmatrix}\right)
\subset\left(\begin{smallmatrix}1 & \Z & \Z\\0&1&\Z\\0&0&1\end{smallmatrix}\right),
\]
which also appears in \cite[Example 1.11]{tao.growth}. This set $S$ satisfies $|S^n|\ll n^3|S|$ regardless of the choice of $n$, but for any fixed $n$ we have $|S^r|\gg r^4$ as $r\to\infty$. (Note that, although $S$ is not symmetric, this can be fixed by considering the set $S\cup S^{-1}$ in its place; we leave the details to the reader.) More generally, Tao \cite{tao.growth} has studied in some detail the possible subsequent growth of sets satisfying \eqref{eq:tao.poly.growth}. See in particular \cite[Theorem 1.9]{tao.growth} and the examples that follow it for more information.
\end{remark*}

\bigskip
\noindent\textsc{A finitary Lie model theorem for approximate groups.} A key precursor to Theorems \ref{thm:bgt.partial}, \ref{thm:bgt.noprog} and \ref{thm:bgt} was the so-called Lie model theorem of Hrushovski. This appeared originally as \cite[Theorem 4.2]{hrush}, and is also stated as \cite[Theorem 3.10]{bgt}. It can be summarised roughly as saying that, in a suitable limit, if $A$ is an approximate group then there exists an approximate group $L\subset A^4$, a finite set $X\subset\langle A\rangle$ with $|X|\ll1$ such that $A\subset XL$, and a subgroup $H\subset L$ that is normal in $\langle L\rangle$ such that $L/H$ is `well modelled' by a compact neighbourhood of the identity in a Lie group. See \cite[Theorem 3.10]{bgt} and the preceding definitions for a precise statement, and in particular for a clarification of the terms `suitable limit' and `well modelled'.

It may be of interest to note that our proof of Theorem \ref{thm:nilp.Frei.proper} gives a finitary version of this result. A standard framework in additive combinatorics in which to `model' one set in terms of another is the \emph{Freiman homomorphism}. Let $A$ and $B$ be subsets of groups. A map $\varphi:A\to B$ is a \emph{Freiman homomorphism of order $m$} if for every $a_1,\ldots,a_{2m}\in A$ with $a_1\cdots a_m=a_{m+1}\cdots a_{2m}$ we have $\varphi(a_1)\cdots\varphi(a_m)=\varphi(a_{m+1})\cdots\varphi(a_{2m})$. The map $\varphi$ is a \emph{Freiman isomorphism of order $m$} if it is a bijection and both $\varphi$ and $\varphi^{-1}$ are Freiman homomorphisms of order $m$. In additive combinatorics, when one says that a set $A$ is `modelled' by a set $B$, in practice one usually means that $A$ is Freiman isomorphic (of some given order) to $B$. Our Lie model theorem is then as follows.
\begin{corollary}[a finitary Lie model theorem for approximate groups]\label{cor:Lie.model}
Let $A$ be a $K$-approximate group. Then for every $m\in\N$ there exist a set $L\subset A^{O_{K,m}(1)}$, a finite subset $X\subset\langle A\rangle$ with $|X|\ll_K1$ such that $A\subset XL$, and a subgroup $H\subset L$ that is normal in $\langle L\rangle$ such that $L/H$ is Freiman $m$-isomorphic to an infinitely proper ordered progression in $1$-upper triangular form in a torsion-free nilpotent group of rank and step at most $O_K(1)$.
\end{corollary}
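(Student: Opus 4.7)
The plan is to combine Theorems \ref{thm:bgt.noprog} and \ref{thm:nilp.Frei.proper} to reduce to an ordered coset progression modulo a normal subgroup, and then construct a torsion-free nilpotent group that models it up to Freiman $m$-isomorphism.

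First apply Theorem \ref{thm:bgt.noprog} to produce subgroups $\Gamma \le \langle A \rangle$ and $H_0 \lhd \Gamma$ with $H_0 \subset A^4$, a $K^3$-approximate group $B \subset A^2 \cap \Gamma$, and a translating set $X_0 \subset \langle A \rangle$ of cardinality $\ll_K 1$ with $A \subset X_0 B$, such that $\Gamma/H_0$ is nilpotent of step $s = O_K(1)$. Apply Theorem \ref{thm:nilp.Frei.proper} to the image of $B$ in $\Gamma/H_0$ with parameters $(m',1)$, where $m' = m'(m,s,d)$ is to be fixed later and $d$ is the rank produced. This yields an $m'$-proper ordered coset progression $\bar H_1 \cdot P_\ord(\bar u_1,\ldots,\bar u_d;L_1,\ldots,L_d)$ of rank $d = O_K(1)$ in $1$-upper-triangular form, contained in a bounded power of the image of $B$, with a translating set of cardinality $\ll_K 1$. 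After lifting, let $H \le \Gamma$ be the preimage of $\bar H_1$ (so $H \supset H_0$ and $H \subset A^{O_{K,m}(1)}$), pick lifts $u_1,\ldots,u_d \in \Gamma$ of the $\bar u_i$, set $L := H \cdot P_\ord(u_1,\ldots,u_d;L_1,\ldots,L_d)$, and take $X$ to be the corresponding translating set in $\langle A \rangle$. Then $A \subset XL$, $H$ is normal in $\langle L \rangle$, and $L/H$ is identified with the $m'$-proper, $1$-upper-triangular ordered progression $P_\ord(\bar u_1,\ldots,\bar u_d;L_1,\ldots,L_d)$ in the nilpotent group $\Gamma/H$.

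The heart of the argument is to construct a torsion-free nilpotent model for this progression. I will define an abstract group $N$ on generators $v_1,\ldots,v_d$ by the polycyclic presentation in which, for each $1 \le i < j \le d$, the relation
\[
[v_i, v_j] = v_{j+1}^{a_{ij}^{(j+1)}} \cdots v_d^{a_{ij}^{(d)}}
\]
is imposed with integer exponents $a_{ij}^{(k)}$ read off from the identities $[u_i,u_j] \equiv u_{j+1}^{a_{ij}^{(j+1)}} \cdots u_d^{a_{ij}^{(d)}} \pmod{H}$ furnished by the $1$-upper-triangular condition, together with the declaration that $N$ is $s$-step nilpotent. Consistency of this presentation -- equivalently, the statement that $N$ is torsion-free nilpotent of rank $d$ and step $s$ with $(v_1,\ldots,v_d)$ a Mal'cev basis -- follows by a Hall collection argument. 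Each Hall--Smith consistency relation reduces, after collection, to an equality between two normal words $v_1^{\ell_1}\cdots v_d^{\ell_d} = v_1^{\ell_1'}\cdots v_d^{\ell_d'}$, and the analogous equality holds in $\Gamma/H$; the exponents arising during collection are polynomially bounded in the $L_i$ with degree depending only on $s$ and $d$, so choosing $m'$ large enough guarantees they lie inside the $m'$-proper region of $\Gamma/H$, which forces the equality. It follows in particular that $P_\ord(v_1,\ldots,v_d;L_1,\ldots,L_d) \subset N$ is automatically infinitely proper and in $1$-upper-triangular form.

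Finally I verify that the map $\varphi \colon P_\ord(v;L) \to L/H$ sending $v_1^{\ell_1}\cdots v_d^{\ell_d} \mapsto u_1^{\ell_1}\cdots u_d^{\ell_d}H$ is a Freiman $m$-isomorphism. Given $m$ elements on either side, one brings their product into normal form via the same commutator collection algorithm; the upper-triangular structure bounds the resulting exponents by a polynomial in the $L_i$ of degree $O_{m,s,d}(1)$, and taking $m'$ larger than this bound makes the resulting normal form unique both in $N$ (by the Mal'cev basis property) and in $L/H$ (by $m'$-properness). Since the two collection algorithms invoke identical commutator relations, they return identical exponent tuples, so $\varphi$ and $\varphi^{-1}$ both preserve $m$-fold products. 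The principal obstacle is the construction and consistency verification for $N$; this is precisely where the $1$-upper-triangular form and $m'$-properness granted by Theorem \ref{thm:nilp.Frei.proper} are essential, as they allow every necessary Hall collection calculation to be certified inside the proper region of $\Gamma/H$ and then transferred to $N$.
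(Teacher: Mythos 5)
Your overall architecture -- reduce to a nilpotent quotient via Theorem~\ref{thm:bgt.noprog}, then produce a proper ordered coset progression in $1$-upper-triangular form, then exhibit a torsion-free nilpotent model -- agrees with the paper's. Where you diverge is in how the model group is manufactured. The paper invokes Theorem~\ref{thm:nilp.bilu.detailed} directly, and this theorem already hands you the model for free: it produces a connected, simply connected nilpotent Lie group $G$, a lattice $\Gamma=\exp\langle e_1,\ldots,e_d\rangle$ inside it, elements $u_i=\exp e_i$, and a homomorphism $\pi\colon\Gamma\to\langle P_0\rangle/H$ with respect to which $P_\ord(u;L)$ is $m$-proper. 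Torsion-freeness of $\Gamma$ is automatic (it sits in a simply connected nilpotent Lie group), infinite properness of $P_\ord(u;L)\subset\Gamma$ comes from the Mal'cev coordinate system of Lemma~\ref{lem:malcev.basis}, and the Freiman $m$-isomorphism is just $\pi$ restricted to the progression, with injectivity on $m$-fold products coming from the $m$-properness via Lemma~\ref{lem:lengths.powers}. You instead go through the weaker Theorem~\ref{thm:nilp.Frei.proper} and then build the abstract model $N$ from a polycyclic presentation, verifying consistency using the $m'$-properness of the progression in $\Gamma/H$. This does work, but it effectively re-proves the existence of the model that the detailed Bilu-type theorem already supplies; the consistency-of-presentation argument is the work that Lemma~\ref{lem:malcev.basis} renders unnecessary in the paper's route. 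Two small points you should tighten if you write this out: (i) ``polynomially bounded in the $L_i$'' is the wrong phrasing -- what Lemma~\ref{lem:lengths.powers} actually gives, and what you need, is a bound of the form $O_{C,d,s}(1)\cdot L_i$, i.e.\ linear in $L_i$ with a constant depending only on $C,d,s$; a genuine higher-degree polynomial dependence on the $L_i$ would defeat the $m'$-properness argument since $m'$ must be an absolute constant; (ii) a genuinely torsion-free polycyclic presentation requires specifying both positive and negative conjugate relations $v_j^{v_i^{\pm1}}$, and it is the four-signs clause in the definition of upper-triangular form (equation~\eqref{eq:C-upp-tri}) that guarantees these can all be read off from $\Gamma/H$ with the requisite exponent bounds.
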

In fact, we obtain a slightly more detailed result than this. We define at the beginning of \cref{sec:Bilu} an object that we call a \emph{Lie progression}, which is a homomorphic image of a certain progression in a simply connected nilpotent Lie group (see \cref{def:Lie.prog}). Theorem \ref{thm:nilp.bilu.detailed} then shows that a nilpotent progression can be covered by a few translates of a Lie progression (modulo a `small' subgroup), and, as we explain in \cref{sec:Bilu}, this combines with Theorems \ref{thm:nilp.Frei} and \ref{thm:bgt.noprog} and various other results of this paper to imply in particular \cref{cor:Lie.model}.

\begin{remark}
Corollary \ref{cor:Lie.model} relies on Theorem \ref{thm:bgt.noprog}, and is therefore ineffective in general; specifically, our argument does not give an explicit bound on the size of the set $X$. However, the argument is effective for any class of group for which we have an effective version of Theorem \ref{thm:bgt.noprog}.
\end{remark}

\bigskip
\noindent\textsc{Notation.} We follow the standard convention that if $X,Y$ are real quantities  and $z_1,\ldots,z_k$ are variables or constants then the expressions $X\ll_{z_1,\ldots,z_k}Y$ and $Y\gg_{z_1,\ldots,z_k}X$ each mean that there exists a constant $C>0$ depending only on $z_1,\ldots,z_k$ such that $X$ is always at most $CY$. Moreover, the notation $O_{z_1,\ldots,z_k}(Y)$ denotes a quantity that is at most a certain constant (depending on $z_1,\ldots,z_k$) multiple of $Y$, while $\Omega_{z_1,\ldots,z_k}(X)$ denotes a quantity that is at least a certain positive constant (depending on $z_1,\ldots,z_k$) multiple of $X$. Thus, for example, the meaning of the notation $X\le O(Y)$ is identical to the meaning of the notation $X\ll Y$.

Given a subset $X$ of a group, we write $\langle X\rangle$ for the subgroup generated by $X$, although if $X$ is given explicitly as $\{x_1,\ldots,x_r\}$ then we write $\langle x_1,\ldots,x_r\rangle$ rather than $\langle\{x_1,\ldots,x_r\}\rangle$. In particular, if $x_1,\ldots,x_r$ are elements of a real vector space or Lie algebra then $\langle x_1,\ldots,x_r\rangle$ means the span of the $x_i$ over $\Z$.

\section{Doubling of ordered progressions in upper-triangular form}\label{sec:ordered.v.nil}
The main purpose of this short section is to study the doubling of ordered progressions in upper-triangular form. In particular, we show in Corollary \ref{cor:prog.doub}, below, that an $m$-proper ordered progression of rank $d$ in $C$-upper-triangular form has doubling at most $O_{C,d,m}(1)$.

Given a progression $P=P_\ord(u_1,\ldots,u_d;L)$ in upper-triangular form, for every pair $i,j$ with $i<j$ and every one of the four possible choices of sign there is by definition some (not necessarily unique) expression $u_{j+1}^{\ell_{j+1}}\cdots u_d^{\ell_d}$ for $[u_i^{\pm1},u_j^{\pm1}]$. For every pair $i,j$ with $i<j$ and every one of the four possible choices of sign we fix arbitrarily one such expression, which we call the \emph{$P$-expression} for $[u_i^{\pm1},u_j^{\pm1}]$. We then define \emph{weights} $\zeta(k)$ of the $u_k$ by setting $\zeta(k)=1$ if $u_k$ does not appear in the $P$-expression for any $[u_i^{\pm1},u_j^{\pm1}]$, and
\[
\zeta(k)=\max\{\zeta(i)+\zeta(j):\text{$u_k$ appears in the $P$-expression for some $[u_i^{\pm1},u_j^{\pm1}]$}\}
\]
otherwise. Note that this is recursively well-defined, although the definition may depend on the choice of $P$-expression.

The main result of this section is then as follows.
\begin{lemma}\label{lem:lengths.powers}
Suppose that $u_1,\ldots,u_d$ are elements of a group and that $L_1,\ldots,L_r$ are positive-integer lengths such that $(u;L)$ is in $C$-upper triangular form. Then, writing $n^\zeta L=(n^{\zeta(1)}L_1,\ldots,n^{\zeta(d)}L_d)$, we have
\[
P_\ord(u;L)^n\subset P_\ord(u;O_{C,d}(n^\zeta L)).
\]
\end{lemma}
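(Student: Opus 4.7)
The plan is to take an arbitrary representative word $w = v^{(1)}v^{(2)}\cdots v^{(n)}$ of an element of $P_\ord(u;L)^n$, regarded as a word in $u_1^{\pm 1},\ldots,u_d^{\pm 1}$ of total $u_k$-magnitude at most $nL_k$ for each $k$, and to rewrite it in ordered form $u_1^{m_1}\cdots u_d^{m_d}$ via a sequence of adjacent commutator swaps $u_j u_i\mapsto u_i u_j\cdot[u_j^{-1},u_i^{-1}]$ for pairs $i<j$. By the $C$-upper-triangular hypothesis (noting that inverting a word in $u_{j+1},\ldots,u_d$ preserves each total $u_k$-magnitude), the commutator inserted during each such swap is a word in $u_{j+1},\ldots,u_d$ only, with total $u_k$-magnitude at most $CL_k/(L_iL_j)$ for each $k>j$.

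The proof then reduces to a bookkeeping estimate. Let $\bar\mu_k$ denote a uniform upper bound on the total $u_k$-magnitude appearing in the word at any stage of the sort. Since every $u_i^{\pm 1}$ letter need pass every $u_j^{\pm 1}$ letter (with $i<j$) at most once, the total number of $(u_i,u_j)$-swaps performed is bounded by $\bar\mu_i\bar\mu_j$, and each such swap contributes at most $CL_k/(L_iL_j)$ to the $u_k$-magnitude. This yields the recursive estimate
\[
\bar\mu_k \;\le\; nL_k \;+\; CL_k \sum_{\substack{i<j\\ u_k\in P\text{-expr.~of }[u_i^{\pm},u_j^{\pm}]}} \frac{\bar\mu_i\bar\mu_j}{L_iL_j},
\]
in which the sum ranges only over pairs with $j<k$, since the $P$-expression of $[u_i^{\pm},u_j^{\pm}]$ involves only $u_{j+1},\ldots,u_d$. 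Normalising by setting $\bar y_k = \bar\mu_k/L_k$ gives $\bar y_k\le n + C\sum\bar y_i\bar y_j$, and a straightforward induction on $k$ then yields $\bar y_k\ll_{C,d} n^{\zeta(k)}$. The base case $\zeta(1)=1$ is immediate (no commutator $P$-expression can involve $u_1$), and the inductive step uses precisely the definition of $\zeta(k)$ as the maximum of $\zeta(i)+\zeta(j)$ over those pairs $(i,j)$ whose commutator $P$-expression involves $u_k$ (or $1$ if there is no such pair). Since $|m_k|\le\bar\mu_k$ after sorting, the claimed containment $P_\ord(u;L)^n\subset P_\ord(u;O_{C,d}(n^\zeta L))$ follows.

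The delicate point is justifying the swap-count bound $\bar\mu_i\bar\mu_j$ and the termination of the sort: both rest on the strictly upper-triangular nature of the commutator relations, which guarantees that commutators inserted during the procedure never introduce new copies of $u_i$ or $u_j$ for the pair $(i,j)$ under consideration. Consequently the sort can be organised into $d$ rounds that successively freeze the $u_1$-count, then the $u_2$-count, and so on, with no feedback from higher-index swaps into lower-index counts; this is what licenses treating $\bar\mu_i$ and $\bar\mu_j$ as global upper bounds in the recursive estimate above.
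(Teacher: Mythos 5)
Your proposal is essentially the paper's own argument: both run Hall's collection process in rounds (first bringing all $u_1^{\pm1}$'s to the front, then $u_2^{\pm1}$'s, and so on), and both observe that the only way new copies of $u_k$ arise is from an $(i,j)$-swap with $j<k$, giving the recursive estimate $\bar\mu_k \le nL_k + CL_k\sum \bar\mu_i\bar\mu_j/(L_iL_j)$, which after normalising by $L_k$ is resolved by induction on $k$ using the very definition of $\zeta(k)$. Your remark that commutators inserted during an $(i,j)$-swap involve only indices $>j$ (so rounds freeze lower-index counts, ensuring termination and licensing the global bounds $\bar\mu_i,\bar\mu_j$) is precisely the point the paper's proof uses when it says the identity \eqref{eq:collect.identity} ``does not result in any new copies of $u_1^{\pm1}$''.
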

\begin{corollary}[bounded doubling of proper ordered progressions in upper-triangular form]\label{cor:prog.doub}
If $P$ is an $m$-proper ordered progression of rank $d$ in $C$-upper-triangular form and $n\in\N$ then $|P^n|\le O_{C,m,d}(n^{O_d(1)})|P|$.
\end{corollary}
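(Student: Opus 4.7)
The plan is to combine Lemma~\ref{lem:lengths.powers} with the trivial cardinality upper bound for an ordered progression and the lower bound on $|P|$ coming from $m$-properness; the heavy lifting has already been done by Lemma~\ref{lem:lengths.powers}.

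First, write $P=P_\ord(u_1,\ldots,u_d;L_1,\ldots,L_d)$. By Lemma~\ref{lem:lengths.powers} we have $P^n\subset P_\ord(u;M_1,\ldots,M_d)$ with $M_i = O_{C,d}(n^{\zeta(i)}L_i)$. Any ordered progression $P_\ord(u;M)$ has cardinality at most $\prod_i(2M_i+1)$, since it is by definition the image of the box $\{(\ell_1,\ldots,\ell_d)\in\Z^d:|\ell_i|\le M_i\}$ under the map $(\ell_i)\mapsto u_1^{\ell_1}\cdots u_d^{\ell_d}$. Hence
\[
|P^n|\ \le\ \prod_{i=1}^d(2M_i+1)\ \ll_{C,d}\ n^{\sum_i\zeta(i)}\prod_{i=1}^d(2L_i+1).
\]

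Next, the hypothesis that $P$ is $m$-proper says precisely that the parametrising map above is injective on $\{|\ell_i|\le mL_i\}$. Replacing $m$ by $\min\{m,1\}$ if necessary, this yields $|P|\ge\prod_i(2\lfloor mL_i\rfloor+1)\gg_m\prod_i(2L_i+1)$; the only mild annoyance is when some $L_i$ is so small that $\lfloor mL_i\rfloor=0$, but such indices contribute a factor of $1$ to the product and are harmless after absorbing a factor $2^d$ into the $\ll_{C,m,d}$ constant.

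Finally, one bounds $\sum_{i=1}^d\zeta(i)=O_d(1)$. From the recursive definition, whenever $\zeta(k)$ is not $1$ it equals $\zeta(i)+\zeta(j)$ for some $i<j<k$, so $\zeta(k)\le 2\max_{j<k}\zeta(j)$. An immediate induction gives $\zeta(k)\le 2^{k-1}$, and therefore $\sum_{i=1}^d\zeta(i)\le 2^d=O_d(1)$. Combining the three bounds yields $|P^n|\ll_{C,m,d}n^{O_d(1)}|P|$, as required.

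The only step with any content beyond bookkeeping is Lemma~\ref{lem:lengths.powers}, which is assumed; so the main potential obstacle is purely notational, namely verifying the crude bound $\zeta(k)\le 2^{k-1}$ from the slightly unusual recursive definition and handling the $m<1$ edge case in the lower bound on $|P|$. Neither is substantive.
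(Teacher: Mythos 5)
Your proof is correct and is, as far as one can tell, exactly the argument the paper has in mind: the paper states Corollary~\ref{cor:prog.doub} immediately after Lemma~\ref{lem:lengths.powers} with no further proof, and the intended deduction is precisely the one you give (bound $|P^n|$ above via the lemma and the trivial cardinality bound on $P_\ord(u;M)$, bound $|P|$ below via properness, and note $\sum_i\zeta(i)=O_d(1)$). The only blemish is a small imprecision in the constant: when $\lfloor mL_i\rfloor=0$ (i.e.\ $L_i<1/m$), the factor $2L_i+1$ being absorbed is bounded by $2/m+1=O_m(1)$, not by $2$, so the total absorbed factor is $O_m(1)^d=O_{m,d}(1)$ rather than $2^d$; this does not affect the conclusion since the implied constant in the statement is allowed to depend on $m$ and $d$ anyway.
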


We also note that if $u_1,\ldots,u_d$ are elements of a group and $L_1,\ldots,L_d$ are positive-integer lengths then we trivially have
\begin{equation}\label{eq:lengths.powers}
P_\ord(u_1,\ldots,u_d;nL)\subset P_\ord(u_1,\ldots,u_d;L)^{dn}
\end{equation}
for every $n\in\N$ and
\begin{equation}\label{eq:prog.inverse}
P_\ord(u;L)^{-1}\subset P_\ord(u;L)^d.
\end{equation}

\begin{proof}[Proof of Lemma \ref{lem:lengths.powers}]
It is straightforward to show that we may write an arbitrary element of $\langle u_1,\ldots,u_d\rangle$ in the form
\begin{equation}\label{eq:collected.form}
u_1^{\ell_1}\cdots u_d^{\ell_d}.
\end{equation}
Indeed, in light of the trivial identity $vu=uv[v,u]$, the upper-triangular form implies that whenever $1\le i<j\le d$ and $\eps_i,\eps_j\in\{\pm1\}$ we have identities of the form
\begin{equation}\label{eq:collect.identity}
u_j^{\eps_j}u_i^{\eps_i}=u_i^{\eps_i}u_j^{\eps_j}u_{j+1}^{q_{j+1}}\cdots u_d^{q_d}
\end{equation}
with
\begin{equation}\label{eq:collect.bound}
|q_k|\le\frac{CL_k}{L_iL_j}.
\end{equation}
We first use the $i=1$ versions of these identities to write an arbitrary element of $\langle u_1,\ldots,u_d\rangle$ in the form $u_1^{\ell_1}\omega$ with $\omega\in\langle u_2,\ldots,u_d\rangle$. Applying the same argument to $\omega$ with $i=2$, and so on, we arrive at the form \eqref{eq:collected.form}.

We prove by induction on $k$ that if we start with an element $p\in P_\ord(u;L)^n$ then this process results in $|\ell_k|\le O_{C,d}(n^{\zeta(k)})L_k$. The identity \eqref{eq:collect.identity} does not result in any new copies of $u_1^{\pm1}$ compared to those featuring in the original word $p$, and so we certainly have $|\ell_1|\le nL_1\le n^{\zeta(1)}L_1$. For the inductive step, note that the only way in which new copies of $u_k$ can arise is in applying the identity \eqref{eq:collect.identity} with $i,j\le k$. However, there are at most $O_d(1)$ possible pairs $i,j\le k$ and, by induction, for any such pair the numbers of instances of the elements $u_i^{\pm1}$ and $u_j^{\pm1}$ to which we will apply the identity \eqref{eq:collect.identity} are at most $O_{C,d}(n^{\zeta(i)})L_i$ and $O_{C,d}(n^{\zeta(j)})L_j$, respectively. The number of pairs of such elements is therefore at most $O_{C,d}(n^{\zeta(i)+\zeta(j)})L_jL_j\le O_{C,d}(n^{\zeta(k)})L_jL_j$. The inductive step, and hence the lemma, therefore follows from \eqref{eq:collect.bound}.
\end{proof}

\section{Basic commutators and nilpotent progressions}\label{sec:nilp-progs}
In this section we define the term \emph{nilpotent progression} appearing in the statements of some of the theorems in the introduction, and show that a nilpotent progression is in fact a special case of an ordered progression in upper-triangular form.

We follow a set up in \cite[\S1]{bg} that was in turn based on \cite[\S11.1]{hall}. We define \emph{(formal) commutators} in the letters $x_1,\ldots,x_r$ recursively by defining each $x_i$ and $x_i^{-1}$ to be a formal commutator, and for every pair $\alpha,\alpha'$ of commutators defining $[\alpha,\alpha']$ also to be a formal commutator. We also write $[\alpha',\alpha]=[\alpha,\alpha']^{-1}$. To each commutator $\alpha$ we assign a \emph{weight vector} $\chi(\alpha)=(\chi_1(\alpha),\ldots,\chi_r(\alpha))$, defined recursively by setting $\chi_i(x_j^{\pm1})=\delta_{ij}$ and, given two formal commutators $\alpha,\alpha'$ in the $x_j$, defining $\chi([\alpha,\alpha'])=\chi(\alpha)+\chi(\alpha')$. We define the \emph{total weight} $|\chi(\alpha)|$ of a commutator $\alpha$ to be $\|\chi(\alpha)\|_1$. We call $\chi_i(\alpha)$ the \emph{weight} of $x_i$ in $\alpha$, or the \emph{$x_i$-weight} of $\alpha$. We define a commutator $[\alpha,\alpha']$ to be a \emph{trivial commutator} if $\alpha=\alpha'$ or if either $\alpha$ or $\alpha'$ is trivial.

Of course, if the letters $x_i$ are elements that generate a group $G$ then we may interpret commutators recursively via $[\alpha,\beta]=\alpha^{-1}\beta^{-1}\alpha\beta$. It is easy to see that a trivial commutator always has the identity element as its interpretation. If $G$ is $s$-step nilpotent then those commutators of total weight greater than $s$ also have trivial interpretations in $G$.

Following \cite[\S11.1]{hall}, we distinguish certain commutators, which we denote by $u_1,u_2,\ldots$, as \emph{basic commutators}. These are so called because in a free group $F$ with free generators $x_1,\ldots,x_r$ and lower central series $F=F_1>F_2>\ldots$ the basic commutators total weight $k$ in the $x_i$ form a free basis of the free abelian group $F_k/F_{k+1}$ (see \cite[\S11.1]{hall}).

We define the basic commutators recursively. For $i=1,\ldots,r$ we set $u_i=x_i$. Then, having defined the basic commutators $u_1,\ldots,u_m$ of total weight less than $k$, we define a commutator $\alpha$ of total weight $k$ to be basic if
\begin{enumerate}
\item $\alpha=[u_i,u_j]$ for some $u_i,u_j$ with $i>j$, and
\item if $u_i=[u_s,u_t]$ then $j\ge t$.
\end{enumerate}
We then label the basic commutators of total weight $k$ as $u_{m+1},\ldots,u_{m'}$, ordered arbitrarily subject to the constraint that basic commutators with the same weight vector are consecutive. This is not the same definition as that used in \cite{nilp.frei}, but the two definitions are equivalent \cite[\S11.1]{hall}. Note that the arbitrariness of the order implies that the list of basic commutators is not uniquely defined. Note, however, that if $r\ge2$ the commutators $[[\cdots[[x_2,x_1],x_1]\cdots],x_1]$ are always basic, so there are always basic commutators of every total weight, whereas if $r=1$ then $x_1$ is the unique basic commutator.

Writing $u_1,\ldots,u_d$ for the list of basic commutators of total weight at most $s$, an arbitrary element $g$ of an $s$-step nilpotent group $G$ generated by the $x_i$ can be expressed in the form
\begin{equation}\label{eq:collected}
g=u_1^{\ell_1}u_2^{\ell_2}\cdots u_d^{\ell_d}
\end{equation}
with $\ell_i\in\Z$. Indeed, we have the following result.
\begin{theorem}[{\cite[Theorem 11.2.4]{hall}}]\label{thm:collected.unique}
If $G$ is the free $s$-step nilpotent group on $x_1,\ldots,x_r$ then every element $g\in G$ has a unique representation in the form \eqref{eq:collected}.
\end{theorem}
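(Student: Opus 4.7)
My plan is to prove both halves of the theorem by induction on the nilpotency class $s$: existence via the collection process and uniqueness by exploiting the lower central series, with the crux being $\Z$-linear independence of the weight-$s$ basic commutators in the centre. The existence half is a routine algorithm, the reduction step of the uniqueness half is standard, and the core difficulty is really the base-level linear independence, for which one must invoke a faithful model of the free nilpotent group.

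For existence, I would take any word $g = x_{i_1}^{\eps_1}\cdots x_{i_k}^{\eps_k}$ and apply the collection process. The identity $vu = uv[v,u]$ lets one push basic commutators of smaller index to the left at the cost of a correction $[u_j^{\pm 1}, u_i^{\pm 1}]^{\pm 1}$ with $j > i$; by condition (1) this correction is itself, up to sign, a basic commutator whenever condition (2) is satisfied, and when (2) fails I would apply the Jacobi identity and the rule $[ab,c] = [a,c]^b[b,c]$ to rewrite it as a product of basic commutators of the same weight. Every new basic commutator so introduced has strictly larger total weight than both $u_i$ and $u_j$, and since $G$ is $s$-step nilpotent all commutators of weight exceeding $s$ are trivial, so the process terminates in the collected form \eqref{eq:collected}. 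For uniqueness, the base case $s = 1$ is the corresponding statement for the free abelian group of rank $r$. For the inductive step let $Z = \gamma_s(G)$; in the free $s$-step nilpotent group $Z$ is central and is generated by the weight-$s$ basic commutators, while $G/Z$ is free of class $s-1$ on the images of the $x_i$. Reducing a putative identity $u_1^{\ell_1}\cdots u_d^{\ell_d} = 1$ modulo $Z$ kills the weight-$s$ factors, and by the inductive hypothesis all $\ell_i$ for $u_i$ of weight $< s$ vanish, leaving an identity among the weight-$s$ basic commutators in the abelian group $Z$.

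The remaining task, and the main obstacle, is to show that the basic commutators of weight exactly $s$ are $\Z$-linearly independent in $Z$. I would handle this by exhibiting a faithful concrete model of the free $s$-step nilpotent group. The cleanest route is via the Baker--Campbell--Hausdorff formula: the free $s$-step nilpotent Lie algebra $\mathfrak{f}$ over $\Q$ on $r$ generators becomes a simply connected nilpotent Lie group $F$ under BCH, and setting $u_i = \exp(U_i)$ for the Lie-algebra basic brackets $U_i$ makes the map $(\ell_1, \ldots, \ell_d) \mapsto u_1^{\ell_1}\cdots u_d^{\ell_d}$ a bijection from $\Z^d$ onto a discrete subgroup of $F$ (Mal'cev coordinates of the second kind). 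The assignment $x_i \mapsto u_i$ presents this subgroup as a quotient of $G$, which, combined with the existence half, forces it to be an isomorphism and yields uniqueness. The nontrivial input is the Hall basis theorem for free Lie algebras, namely that the basic Lie brackets of total weight at most $s$ form a $\Q$-basis of $\mathfrak{f}$; this is standardly established via the Poincar\'e--Birkhoff--Witt theorem applied to the truncated tensor algebra $\Q\langle X_1, \ldots, X_r\rangle/I_{s+1}$, together with the Dynkin--Specht--Wever idempotent projection onto the Lie subalgebra.
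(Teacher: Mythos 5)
The paper does not prove this theorem: it is imported wholesale from Hall's book (\cite[Theorem 11.2.4]{hall}) and used as a black box in Sections \ref{sec:nilp-progs} and \ref{sec:coords}. So there is no ``paper's own proof'' to compare against; what follows compares your proposal with the proof in the cited source.

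Your outline is essentially sound, but the route is genuinely different from Hall's. Hall establishes existence via the collecting process (as you do) and establishes uniqueness, specifically the $\Z$-linear independence of the weight-$k$ basic commutators in $\gamma_k/\gamma_{k+1}$, by means of the Magnus embedding: one maps the free group into the group of units of the truncated free associative algebra $\Z\langle X_1,\ldots,X_r\rangle/I_{s+1}$ via $x_i\mapsto 1+X_i$, and reads off the independence from the coefficients of the resulting power series. Your proposal instead passes to the free nilpotent Lie algebra $\mathfrak f$ over $\Q$, invokes the Hall basis theorem for $\mathfrak f$ (which you propose to get from PBW on the truncated tensor algebra together with the Dynkin--Specht--Wever idempotent), and then pushes the statement back to the group via BCH. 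These are cousins -- PBW applied to the free Lie algebra is precisely the assertion that its universal enveloping algebra is the free associative algebra, which is what the Magnus embedding exploits -- but the presentation and the supporting machinery differ, and your version fits more naturally with the BCH/Mal'cev apparatus that the rest of the paper develops in Section \ref{sec:coords}.

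One point deserves sharpening. Your claim that $(\ell_1,\ldots,\ell_d)\mapsto u_1^{\ell_1}\cdots u_d^{\ell_d}$ with $u_i=\exp U_i$ gives a bijection of $\Z^d$ onto a discrete \emph{subgroup} of $F$, and that the assignment $x_i\mapsto u_i$ is therefore a presentation whose surjectivity forces it to be an isomorphism, is not how the argument should run: for intermediate weights $1<|\chi|<s$ the image $\phi(u_i)$ of the group-theoretic basic commutator $u_i$ under $x_j\mapsto\exp X_j$ is $\exp(U_i+\text{higher-weight corrections})$, not $\exp U_i$, by BCH, so the composite $\Z^d\to G\to F$ does not coincide with the second-kind Mal'cev chart, and the image of $\Z^d$ under the latter need not even be a subgroup (cf.\ the finite-index corrections $Q_i$ in the paper's Proposition \ref{prop:nilbox.upper-tri} and Lemma \ref{lem:dilates.subgroup}). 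Fortunately your induction on $s$ makes this detour unnecessary: the only place one needs the Lie model is for the weight-$s$ basic commutators in the centre $\gamma_s(G)$, and \emph{there} the BCH correction terms all have weight $>s$ and hence vanish in $\mathfrak f$, so $\phi(u_i)=\exp U_i$ exactly, the $\exp U_i$ commute, and the independence of the $U_i$ as a $\Q$-basis of $\gamma_s\mathfrak f$ gives the required conclusion. If you excise the digression about the global Mal'cev bijection and rely solely on the top-weight computation inside the induction, the proof is correct.
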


The following definition is due to Breuillard and Green \cite{bg}.
\begin{definition}[nilpotent progression {\cite[Definition 1.4]{bg}}]
A \emph{nilpotent progression} on generators $x_1,\ldots,x_r$ in an $s$-step nilpotent group with lengths $L_1,\ldots,L_r$ is an ordered progression $P=P_\ord(u_1,\ldots,u_d;L_1,\ldots,L_d)$ on the complete list $u_1,\ldots,u_d$ of basic commutators in the $x_i$ in which
\begin{equation}\label{eq:n.prog.lengths}
L_i=L^{\chi(u_i)}
\end{equation}
for every $i>r$. Here we use the notation $L^\chi$ to denote the quantity $L_1^{\chi_1}\cdots L_r^{\chi_r}$. We define $r$ to be the \emph{rank} and $s$ to be the \emph{step} of $P$. We define the \emph{total rank} $d$ of $P$ to be the rank of $P$ as an ordered progression in the $u_i$. If $x_1,\ldots,x_r$ freely generate a free $s$-step nilpotent group of rank $r$ then we say that $P$ if a \emph{free nilpotent progression} of rank $r$ and step $s$.

If $P$ is an ordered progression in a group $G$ and $H$ is a subgroup of $G$ normalised by $P$, and if $P$ is a nilpotent progression of rank $r$, step $s$ and total rank $d$ modulo $H$, then we say $HP$ is a \emph{nilpotent coset progression} of rank $r$, step $s$ and total rank $d$.
\end{definition}
Note that the rank $r$ of a nilpotent progression $P$ is at most its total rank $d$, and indeed unless $P$ generates an abelian group the presence of at least one non-trivial basic commutator ensures that $r<d$. Nonetheless, we do have $d\ll_{r,s}1$, since $d$ is the number of basic commutators of weight at most $s$ in $r$ letters.

\begin{remark}\label{rem:free.image}
A nilpotent progression $P$ is the image of a free nilpotent progression of the same rank and step under the homomorphism taking the generators of the free group to the generators of $P$.
\end{remark}

A useful fact about nilpotent progressions is that they are already in upper triangular form, as follows.

\begin{prop}\label{prop:nilp.prog.upper.tri}
A nilpotent progression of rank $r$ and step $s$ is in $O_{r,s}(1)$-upper-triangular form.
\end{prop}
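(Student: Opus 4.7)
My plan is to reduce to the free case and analyze the collected form of $[u_i^{\pm1}, u_j^{\pm1}]$. By Remark \ref{rem:free.image}, any nilpotent progression of rank $r$ and step $s$ is the image under a group homomorphism of the corresponding free nilpotent progression; since the $C$-upper-triangular form condition is an identity preserved under homomorphisms, it suffices to prove the statement in the free $s$-step nilpotent group $F$ on $x_1, \ldots, x_r$. Fix $i < j$ and a sign choice, and use Theorem \ref{thm:collected.unique} to write $[u_i^{\pm1}, u_j^{\pm1}] = \prod_{k=1}^{d} u_k^{a_k}$ uniquely in collected form. The proposition will reduce to three claims: (a) $a_k = 0$ for all $k \le j$; (b) $\chi(u_k) \ge \chi(u_i) + \chi(u_j)$ componentwise whenever $a_k \ne 0$; and (c) $|a_k| \le O_{r,s}(1)$ for every $k$.

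Claim (a) will follow from the fact that $[u_i^{\pm1}, u_j^{\pm1}] \in F_t$ for $t = |\chi(u_i)| + |\chi(u_j)|$, combined with the standard observation (implicit in Theorem \ref{thm:collected.unique}, e.g.\ by passing to the quotient $F/F_t$ and invoking uniqueness of collected form there) that an element of $F_t$ has collected form supported only on basic commutators of total weight at least $t$; since basic commutators are ordered by increasing total weight and $|\chi(u_j)| < t$, the support lies in indices strictly greater than $j$. Claim (c) is a finiteness observation: $F$ has boundedly many basic commutators and there are boundedly many $(i, j, \pm)$ combinations, so Hall's collection algorithm produces integer coefficients $a_k$ whose magnitudes are bounded by a constant depending only on $r$ and $s$.

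The main obstacle is claim (b), a multi-homogeneity statement. I plan to establish it by embedding $F$ as a lattice in the simply connected nilpotent Lie group $F_{\Q} = \exp(\mathfrak{f}_\Q)$, where $\mathfrak{f}_\Q$ is the free nilpotent Lie algebra over $\Q$. Since $\mathfrak{f}_\Q$ is genuinely $\N^r$-multi-graded (with $\log x_\ell$ of multi-degree $e_\ell$) and the Baker--Campbell--Hausdorff expansion for the group commutator $\log[\exp X, \exp Y]$ is a Lie polynomial each of whose monomials involves at least one $X$ and at least one $Y$, an induction on the recursive building-up of the basic commutators yields $\log u_k \in \bigoplus_{\beta \ge \chi(u_k)} \mathfrak{f}_\beta$ for every $k$, and hence $\log[u_i^{\pm1}, u_j^{\pm1}] \in \bigoplus_{\beta \ge \chi(u_i)+\chi(u_j)} \mathfrak{f}_\beta$. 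A second induction, on $|\gamma|$, then translates this back to the collected form: if $g = \prod u_k^{a_k}$ satisfies $\log g \in \bigoplus_{\beta \ge \alpha} \mathfrak{f}_\beta$ then $a_k = 0$ whenever $\chi(u_k) \not\ge \alpha$, because projecting onto the $\mathfrak{f}_\gamma$ component for any $\gamma \not\ge \alpha$, all contributions beyond $\sum_{\chi(u_k)=\gamma} a_k E_k$ (with $E_k$ the multi-homogeneous leading part of $\log u_k$) involve coefficients $a_l$ with $\chi(u_l)$ strictly smaller than $\gamma$ componentwise (and hence also $\chi(u_l) \not\ge \alpha$), which vanish by the inductive hypothesis; this leaves $\sum_{\chi(u_k) = \gamma} a_k E_k = 0$, forcing $a_k = 0$ by linear independence of the $E_k$ within $\mathfrak{f}_\gamma$.

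Finally, combining the three claims: whenever $a_k \ne 0$, (b) yields $L_k = L^{\chi(u_k)} \ge L^{\chi(u_i)+\chi(u_j)} = L_i L_j$ and (c) gives $|a_k| \le O_{r,s}(1) \le O_{r,s}(1) \cdot L_k/(L_i L_j)$; together with (a) this places $[u_i^{\pm1}, u_j^{\pm1}]$ in $P_\ord(u_{j+1}, \ldots, u_d; O_{r,s}(L_{j+1}/(L_i L_j)), \ldots, O_{r,s}(L_d/(L_i L_j)))$, which is exactly the $O_{r,s}(1)$-upper-triangular form condition.
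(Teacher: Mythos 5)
Your proposal is correct, but it proves the key weight-monotonicity step --- your claim (b), which is equivalent to the paper's Proposition~\ref{prop:non-basic.collected} --- by a genuinely different route. The paper establishes Proposition~\ref{prop:non-basic.collected} purely combinatorially: it first handles the multilinear case (each letter appearing at most once, Lemma~\ref{lem:weights.<=1}) by applying the substitution $x_i\mapsto1$ and invoking uniqueness of collected form in the smaller free group (Lemma~\ref{lem:reduced.list}), and then reduces the general case to the multilinear one by relabelling repeated letters as distinct letters and constructing a compatible list of basic commutators (Lemma~\ref{lem:relabel}). Your argument instead passes to the Mal'cev completion, exploits the $\N^r$-multi-grading of the free nilpotent Lie algebra, and uses Baker--Campbell--Hausdorff together with the fact that the leading homogeneous parts $E_k$ of $\log u_k$ recover the Hall basis of each graded piece $\mathfrak{f}_\gamma$. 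What's notable is that the paper \emph{also} develops this Lie-algebraic machinery, but in the opposite logical order: its Lemma~\ref{lem:up-closed} is proved \emph{using} Proposition~\ref{prop:non-basic.collected} as input, whereas you derive the weight bound directly from the grading, effectively giving an independent analytic proof of Proposition~\ref{prop:non-basic.collected}. The tradeoff is that your route imports more machinery (BCH, multi-grading, linear independence of the Hall basis within each $\mathfrak{f}_\gamma$ --- a standard fact, but one you should cite) and works over $\Q$, while the paper's argument stays entirely inside the free group. Your claims (a) and (c) and the final assembly match the paper's reasoning.

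Two small points to tighten. First, the linear independence of the leading parts $E_k$ within $\mathfrak{f}_\gamma$ is not automatic from your set-up; it relies on the fact that the basic commutators, read as Lie brackets, form a graded (Hall) basis of the free nilpotent Lie algebra, and that $E_k$ coincides with that Lie bracket --- both true, but worth stating, since $E_k$ is defined as a component of $\log u_k$ and one must see via your first induction that the lower-order BCH corrections contribute only to strictly higher weight vectors. Second, in the second induction you should make explicit that the hypothesis $\gamma\not\ge\alpha$ is what licenses the deduction ``$\chi(u_l)\le\gamma$ strictly $\Rightarrow\chi(u_l)\not\ge\alpha$''; as written this is correct but compressed.
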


In fact, it will be useful in later sections to have a slightly more precise variant of Proposition \ref{prop:nilp.prog.upper.tri}. We define a partial order on the possible weight vectors by writing $\chi\ge\chi'$ if $\chi_i\ge\chi_i'$ for every $i$. 
\begin{prop}\label{prop:non-basic.collected}
Let $G$ be the free $s$-step nilpotent group on the generators $x_1,\ldots,x_r$, and let $\alpha$ be some (not necessarily basic) commutator in the $x_i$. Then for every basic commutator $u_i$ appearing in the expression \eqref{eq:collected} for $\alpha$ we have $\chi(u_i)\ge\chi(\alpha)$.
\end{prop}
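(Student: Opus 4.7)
My plan is to work inside the Mal'cev Lie algebra $\mathfrak{g}$ of $G$: the free $s$-step nilpotent $\Q$-Lie algebra on generators $X_1,\dots,X_r$, into which $G$ embeds via $\exp$. The key structural fact is that $\mathfrak{g}$ carries a natural weight grading $\mathfrak{g}=\bigoplus_\chi\mathfrak{g}_\chi$, where $\mathfrak{g}_\chi$ is spanned by Lie words in the $X_i$ of weight vector $\chi$, with $[\mathfrak{g}_\chi,\mathfrak{g}_{\chi'}]\subseteq\mathfrak{g}_{\chi+\chi'}$. By Hall's basis theorem, the \emph{Lie basic commutators} $\tilde u_i\in\mathfrak{g}_{\chi(u_i)}$, obtained from $u_i$ by replacing each $x_j$ with $X_j$ and each group commutator with a Lie bracket, form a homogeneous $\Q$-basis of $\mathfrak{g}$.

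First I would establish, by induction on the depth of the formal commutator tree of $\alpha$, that
\[
\log(\alpha)\in\mathfrak{g}_{\ge\chi(\alpha)}:=\bigoplus_{\chi\ge\chi(\alpha)}\mathfrak{g}_\chi.
\]
The base case $\alpha=x_i^{\pm 1}$ is immediate since $\log(x_i^{\pm 1})=\pm X_i\in\mathfrak{g}_{e_i}$. For $\alpha=[\beta,\gamma]$, the Baker--Campbell--Hausdorff group-commutator formula expresses $\log[\beta,\gamma]$ as a finite sum of iterated Lie brackets in $\log(\beta)$ and $\log(\gamma)$, each containing at least one copy of each argument; bilinearity combined with the inductive hypothesis and additivity of weight under bracket puts every such term into $\mathfrak{g}_{\ge\chi(\beta)+\chi(\gamma)}=\mathfrak{g}_{\ge\chi(\alpha)}$. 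Specialising to $\alpha=u_i$ this also yields $\log(u_i)=\tilde u_i+R_i$ with $R_i\in\bigoplus_{\chi>\chi(u_i)}\mathfrak{g}_\chi$, since at the minimal-weight level BCH reduces to the naive iterated Lie bracket.

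Given the collected form $\alpha=u_1^{\ell_1}\cdots u_d^{\ell_d}$, iterated BCH gives
\[
\log(\alpha)=\sum_i\ell_i\tilde u_i+\sum_i\ell_iR_i+B,
\]
where $B$ is a sum of iterated brackets of the $\ell_j\log(u_j)$ of bracket-length $\ge 2$. Any such bracket lies in weights dominating $\chi(u_{j_1})+\chi(u_{j_2})$, strictly greater (componentwise) than any single $\chi(u_{j_l})$ involved. Let $S=\{\chi(u_i):\ell_i\ne 0\}$ and pick $\chi_0$ minimal in $S$ in the componentwise partial order. One verifies that neither $B$ nor any $R_j$ with $\ell_j\ne 0$ can contribute to the $\mathfrak{g}_{\chi_0}$-component: brackets land only in weights strictly dominating some element of $S$, and $R_j$ lands only in weights strictly above $\chi(u_j)\in S$, both ruled out by minimality of $\chi_0$. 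Hence the $\mathfrak{g}_{\chi_0}$-component of $\log(\alpha)$ equals $\sum_{i:\chi(u_i)=\chi_0}\ell_i\tilde u_i$, which is nonzero by Hall's linear independence. Combined with $\log(\alpha)\in\mathfrak{g}_{\ge\chi(\alpha)}$ from the first step, this forces $\chi_0\ge\chi(\alpha)$; since every element of $S$ dominates some minimal one, the proposition follows.

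I expect the main obstacle to be the BCH bookkeeping at the second step, in particular verifying cleanly that the error contributions $R_i$ and $B$ live strictly above their natural weights in the \emph{partial} (not total) order on $\N^r$, and then carefully exploiting minimality of $\chi_0$ when weight vectors in $S$ may be pairwise incomparable.
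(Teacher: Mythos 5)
Your argument is correct, but it takes a genuinely different route from the paper. The paper's proof is purely combinatorial: it first handles the special case in which each letter $x_i$ has weight at most $1$ in $\alpha$ (Lemma \ref{lem:weights.<=1}), by applying the homomorphism that kills one letter and invoking uniqueness of the collected form together with the reduced-list Lemma \ref{lem:reduced.list}; it then reduces the general case to this one by a relabelling argument (Lemma \ref{lem:relabel}) that replaces the repeated letters of $\alpha$ by distinct fresh letters $y_i$ and tracks carefully how basic commutators behave under the collapse $y_i\mapsto x_j$. You instead pass to the Mal'cev Lie algebra, which is the free $s$-step nilpotent $\Q$-Lie algebra: you use the $\N^r$-grading, the Baker--Campbell--Hausdorff formula to show $\log\alpha\in\bigoplus_{\chi\ge\chi(\alpha)}\mathfrak g_\chi$ and $\log u_i=\tilde u_i+(\text{strictly higher weight})$, iterated BCH on the collected form to isolate the linear terms, and Hall's linear independence of the homogeneous basic Lie brackets to conclude that the component of $\log\alpha$ in any minimal weight of the support is nonzero. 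That gives the bound for every minimal weight in the support, and hence for all of them. Both arguments are sound; yours is shorter granted Mal'cev theory and BCH bookkeeping, while the paper's is more elementary and self-contained. It is worth noting that the paper itself develops precisely this Lie-algebra machinery later, in Section \ref{sec:coords} (Lemmas \ref{lem:comms.1} and \ref{lem:up-closed}), but deduces Lemma \ref{lem:up-closed} \emph{from} Proposition \ref{prop:non-basic.collected}; your approach inverts that logical order, proving the Lie-algebra statement directly from BCH and deriving the combinatorial proposition as a consequence, which is arguably a cleaner organisation at the cost of heavier input.
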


\begin{proof}[Proof of Proposition \ref{prop:nilp.prog.upper.tri} from Proposition \ref{prop:non-basic.collected}]
If $u_i,u_j$ are two basic commutators then it follows from Proposition \ref{prop:non-basic.collected} that
\[
[u_i^{\pm1},u_j^{\pm1}]\in P_\ord(u_{k_1},\ldots,u_{k_m};O_{r,s}(1)),
\]
where $u_{k_1},\ldots,u_{k_m}$ is the ordered list of those basic commutators whose weight vectors are coordinate wise at least $\chi(u_i)+\chi(u_j)$. It follows from \eqref{eq:n.prog.lengths} that $L_{k_\ell}\ge L_iL_j$ for all such $u_{k_\ell}$, and the proposition follows.
\end{proof}

We start our proof of Proposition \ref{prop:non-basic.collected} with the case of a commutator that has $x_i$-weight at most $1$ for all $i$, which is to say a commutator in which each letter $x_i$ appears at most once (we prove this case of Proposition \ref{prop:non-basic.collected} in Lemma \ref{lem:weights.<=1}, below).

\begin{lemma}\label{lem:reduced.list}
Let $x_1,\ldots,x_r$ be letters, and let $u_1,u_2,\ldots$ be a complete list of basic commutators in the $x_i$. Then there exists a complete list of basic commutators in the letters $x_1,\ldots,x_{i-1},x_{i+1},\ldots,x_r$ that is precisely the subsequence of those $u_j$ with zero $x_i$-weight.
\end{lemma}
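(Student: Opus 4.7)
The plan is to show by induction on total weight $k$ that two assertions hold simultaneously: (a) a commutator in the letters $\{x_j:j\ne i\}$ is basic in the reduced alphabet if and only if, viewed as a commutator in the full alphabet, it is basic there with zero $x_i$-weight; and (b) the ordering on the zero-$x_i$-weight subsequence of $u_1,u_2,\ldots$ satisfies the two ordering constraints demanded of a basic-commutator list in the reduced alphabet.

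For the base case $k=1$, the weight-$1$ basic commutators in the full alphabet are $x_1,\ldots,x_r$, of which the zero-$x_i$-weight ones are precisely $\{x_j:j\ne i\}$; these are exactly the weight-$1$ basic commutators in the reduced alphabet. For the inductive step I would take a commutator $\alpha$ of total weight $k$ with zero $x_i$-weight. If $\alpha=[u_a,u_b]$ is basic in the full alphabet, additivity of $x_i$-weight forces both $u_a$ and $u_b$ to have zero $x_i$-weight, so by the inductive hypothesis both appear as $v_{p_a}$ and $v_{p_b}$ in the subsequence, with $p_a>p_b$ since the subsequence inclusion is order-preserving. Writing $u_a=[u_s,u_t]$ similarly yields $u_s=v_{p_s}$, $u_t=v_{p_t}$, and the condition $b\ge t$ transfers under the same order-preserving bijection to $p_b\ge p_t$, making $\alpha$ basic in the reduced alphabet. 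Conversely, if $\alpha$ is basic in the reduced alphabet with defining expression $[v_{p_a},v_{p_b}]$, lifting via the inclusion produces an expression $[u_a,u_b]$ whose indices satisfy $a>b$ and the nested constraint, so $\alpha$ is basic in the full alphabet, has zero $x_i$-weight, and therefore appears as some $u_j$.

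To finish, I would verify the two ordering constraints on the subsequence. Non-decreasing total weight is inherited from $u_1,u_2,\ldots$. For consecutivity of same-weight-vector commutators, note that a weight vector in the reduced alphabet is exactly a full-alphabet weight vector with a zero in coordinate $i$; hence two zero-$x_i$-weight basic commutators share a reduced weight vector iff they share a full weight vector, in which case they are consecutive in $u_1,u_2,\ldots$ by hypothesis and therefore consecutive in the subsequence.

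The main obstacle is the bookkeeping required to shuttle the recursive basicity condition ``if $u_a=[u_s,u_t]$ then $b\ge t$'' back and forth across the relabelling induced by the order-preserving subsequence inclusion; the only substantive ingredient that makes this go through is the additivity of $x_i$-weight under commutator brackets, which guarantees that whenever $u_a$ has zero $x_i$-weight its canonical constituents $u_s,u_t$ do as well, so nothing in the recursion ever leaves the subsequence.
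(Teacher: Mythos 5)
Your proposal is correct and follows essentially the same route as the paper: an induction on total weight, using the freedom in the recursive definition to choose the order on the reduced alphabet's basic commutators to be the restriction of the order on the $u_j$, with additivity of $x_i$-weight ensuring that constituents of zero-$x_i$-weight commutators stay inside the subsequence. The paper phrases the induction as explicitly \emph{constructing} the reduced list level by level (which sidesteps the slight circularity of speaking of ``basic in the reduced alphabet'' before a reduced list has been fixed), but the content is the same as your assertions (a) and (b).
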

\begin{proof}
Define the ordered list of basic commutators of weight $1$ to be $x_1,\ldots,x_{i-1},x_{i+1},\ldots,x_r$. When defining the basic commutators of weight $n$ we may then assume by induction that the sequence of basic commutators of weight less than $n$ is precisely the subsequence of those $u_j$ of total weight less than $n$ and zero $x_i$-weight. It is then trivial that a commutator of weight exactly $n$ with zero $x_i$-weight satisfies the conditions for being included as a basic commutator in one list if and only if it satisfies the conditions for inclusion on the other list. If we then choose the order of the basic commutators of total weight $n$ in $x_1,\ldots,x_{i-1},x_{i+1},\ldots,x_r$ to be the restriction of the order on the $u_j$, it follows that the sequence of basic commutators of weight at most $n$ in $x_1,\ldots,x_{i-1},x_{i+1},\ldots,x_r$ is precisely the subsequence of those $u_j$ of total weight at most $n$ and zero $x_i$-weight, as required.
\end{proof}

\begin{lemma}\label{lem:weights.<=1}
Let $G$ be the free $s$-step nilpotent group on the generators $x_1,\ldots,x_r$, and let $\alpha$ be a commutator in the $x_i$. Then every $x_i$ that has non-zero weight in $\alpha$ also has non-zero weight in every basic commutator appearing in the expression \eqref{eq:collected} for $\alpha$.
\end{lemma}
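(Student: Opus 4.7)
The plan is to prove the contrapositive: fix a letter $x_i$ with $\chi_i(\alpha)>0$, and show that every basic commutator $u_j$ with zero $x_i$-weight must satisfy $\ell_j=0$ in the collected expression $\alpha=u_1^{\ell_1}\cdots u_d^{\ell_d}$. The idea is to exploit the universal property of free $s$-step nilpotent groups by projecting $G$ onto the free $s$-step nilpotent group $G_i$ on the reduced alphabet $\{x_k:k\ne i\}$, and then to invoke the uniqueness of the collected form (Theorem \ref{thm:collected.unique}) inside $G_i$.

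First I would construct the homomorphism $\pi_i:G\to G_i$ sending $x_i\mapsto 1$ and $x_k\mapsto x_k$ for $k\ne i$; this is well defined by the universal property of $G$, since $G_i$ is itself $s$-step nilpotent. Next I would verify by a short induction on the formation of formal commutators that any formal commutator $\beta$ in the $x_k$ with $\chi_i(\beta)>0$ is sent to the identity by $\pi_i$. The base case $\beta=x_i^{\pm 1}$ is immediate, while if $\beta=[\beta_1,\beta_2]$ then at least one of $\chi_i(\beta_1),\chi_i(\beta_2)$ is positive by additivity of weights, so the inductive hypothesis yields $\pi_i(\beta)=[\pi_i(\beta_1),\pi_i(\beta_2)]=1$ (using $[1,g]=[g,1]=1$). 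In particular $\pi_i(\alpha)=1$, and similarly $\pi_i(u_j)=1$ for every basic commutator $u_j$ with $\chi_i(u_j)>0$.

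Applying $\pi_i$ to the collected expression gives
\[
1=\prod_{j:\,\chi_i(u_j)=0}u_j^{\ell_j}
\]
in $G_i$, where the product is taken in the original order. By Lemma \ref{lem:reduced.list}, this surviving subsequence is precisely a complete list of basic commutators for $G_i$, in the correct order. Theorem \ref{thm:collected.unique}, applied in $G_i$, then forces $\ell_j=0$ for every $j$ with $\chi_i(u_j)=0$, which is the desired conclusion.

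I do not anticipate any serious conceptual obstacle: the argument is essentially formal, resting on (i) the universal property defining $\pi_i$, (ii) an easy induction on commutator structure, and (iii) the uniqueness theorem for collected forms. The one point requiring genuine care is the invocation of Lemma \ref{lem:reduced.list}: one must check that the surviving factors assemble into a genuine complete list of basic commutators of $G_i$ (rather than an arbitrary subsequence that happens to be made up of basic commutators), for otherwise Theorem \ref{thm:collected.unique} could not be applied directly.
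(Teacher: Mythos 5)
Your proposal is correct and follows essentially the same route as the paper: projecting onto the free $s$-step nilpotent group on the reduced alphabet (the paper realizes this as the subgroup $G'=\langle x_k:k\ne i\rangle\le G$, but the two are canonically isomorphic), applying the homomorphism to the collected expression, and then invoking Lemma \ref{lem:reduced.list} together with the uniqueness in Theorem \ref{thm:collected.unique}. Your version merely spells out the small induction showing $\pi_i$ kills commutators of positive $x_i$-weight, which the paper leaves implicit.
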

\begin{proof}
Let $G'$ be the subgroup of $G$ generated by $x_1,\ldots,x_{i-1},x_{i+1},\ldots,x_r$, noting that $G'$ is the free $s$-step nilpotent group on these generators. Let $\pi:G\to G'$ be the unique homomorphism $G\to G'$ such that $\pi(x_i)=1$ and $\pi(x_j)=x_j$ otherwise. Expressing $\alpha$ in the form \eqref{eq:collected} as $\alpha=u_1^{\ell_1}u_2^{\ell_2}\cdots u_d^{\ell_d}$ we have
\[
\pi(\alpha)=\prod_{j\,:\,\chi_i(u_j)=0}u_j^{\ell_j}.
\]
However, if $x_i$ has non-zero weight in $\alpha$ then $\pi(\alpha)=1$, and so it follows from Theorem \ref{thm:collected.unique} and Lemma \ref{lem:reduced.list} that $\ell_j=0$ whenever $\chi_i(u_j)=0$.
\end{proof}

We now move onto the general case of Proposition \ref{prop:non-basic.collected}.

\begin{lemma}\label{lem:relabel}
Let $x_1,\ldots,x_r$ be letters, and let $u_1,u_2,\ldots$ be a complete list of basic commutators in the $x_\ell$. Let $0\le k_1\le k_2\le\ldots\le k_r$ be integers, and let $y_1,\ldots,y_{k_r}$ be letters. Then there exists a complete list $v_1,v_2,\ldots$ of basic commutators in the $y_\ell$ such that if $\rho$ is the map from commutators in the $y_\ell$ to commutators in the $x_\ell$ defined by relabelling $y_i$ as $x_j$ for $k_{j-1}<i\le k_j$, then there is a map $\xi:\N\to\N\cup\{0\}$ such that
\begin{enumerate}
\renewcommand{\labelenumi}{(\alph{enumi})}
\item$\rho(v_i)$ is a trivial commutator if $\xi(i)=0$;
\item$\rho(v_i)=u_{\xi(i)}$ for every $i$ with $\xi(i)\ne0$; and
\item if $i<j$ and $\xi(j)\ne0$ then $\xi(i)\le\xi(j)$.
\end{enumerate}
\end{lemma}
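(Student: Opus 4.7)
The plan is to construct the list $v_1, v_2, \ldots$ and the map $\xi$ simultaneously by induction on total weight $n$, maintaining (a), (b), (c) throughout. For the base case $n = 1$, I would set $v_i = y_i$ for $1 \le i \le k_r$ and $\xi(i) = j$ where $j$ is the unique index with $k_{j-1} < i \le k_j$. The hypothesis $k_1 \le \cdots \le k_r$ makes $\xi$ non-decreasing on $\{1, \ldots, k_r\}$, and since each $y_i$ relabels to $x_j = u_j$ all three conditions are immediate.

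For the inductive step, suppose all basic commutators of weight less than $n$ in the $y_\ell$ have been listed as $v_1, \ldots, v_N$ with $\xi$ satisfying (a)--(c). The first task is to prove a structural claim: whenever $w = [v_p, v_q]$ is basic in the $y_\ell$ of weight $n$, either $\rho(w)$ is a trivial commutator or $\rho(w)$ itself is a basic commutator in the $x_\ell$, and hence equals some $u_k$. I would verify the two defining conditions of basicness for $\rho(w)$. Assuming neither $\rho(v_p)$ nor $\rho(v_q)$ is trivial, the inductive hypothesis gives $\rho(v_p) = u_{\xi(p)}$ and $\rho(v_q) = u_{\xi(q)}$; basicness of $w$ (i.e.\ $p > q$) combined with (c) yields $\xi(p) \ge \xi(q)$, strict because $[u_s,u_s]$ would be trivial. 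Next, if $\rho(v_p) = u_s = [u_{s'}, u_{t'}]$ as formal commutators, then necessarily $v_p = [v_{p'}, v_{q'}]$ with $\rho(v_{p'}) = u_{s'}$ and $\rho(v_{q'}) = u_{t'}$; basicness of $w$ forces $q \ge q'$, and (c) then gives $\xi(q) \ge \xi(q')$, i.e.\ $t \ge t'$.

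It remains to order the weight-$n$ basics in the $y_\ell$ so as to maintain (c), subject to the requirement that basic commutators of common $y$-weight-vector are consecutive. I would group these basics first by their projected $x$-weight-vector $\psi = \pi(\chi)$. Since all basic $u_k$ sharing an $x$-weight-vector are consecutive in the $u$-list, the ordering among different $\psi$-classes is forced by the $u$-list. Within a fixed $\psi$-class, every $y$-weight-vector block $V_\chi$ with $\pi(\chi) = \psi$ must remain contiguous, and its non-trivial $\xi$-values lie in the consecutive range of $u$-indices attached to $\psi$.

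The main obstacle I anticipate is precisely this last ordering step: different blocks $V_\chi$ projecting to the same $\psi$ may contain elements with overlapping ranges of $\xi$-values, so arranging them linearly while keeping each block contiguous and making the global $\xi$-sequence non-decreasing on positive values is delicate. I plan to exploit the freedom in the arbitrary ordering of same-weight-vector basic commutators in the $u$-list, combined with the induced matching between $y$-blocks and ranges of $u$-indices, to show that a compatible global arrangement always exists. Once the ordering is fixed, (a) and (b) follow directly from the definition of $\xi$ via $\rho$, while (c) is ensured by construction, completing the inductive step and the proof.
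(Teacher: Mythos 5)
Your induction scheme, the statement of the structural claim (that $\rho(v)$ of a weight-$n$ basic $v$ is either trivial or basic in the $x_\ell$), and the verification of that claim all match the paper's argument almost line for line: you use the previous weight's condition~(c) together with $p>q$ to get $\xi(p)>\xi(q)$, and condition~(c) again (applied via $q\ge q'$, which comes from basicness of $w$) to verify Hall's second defining property of a basic commutator. Up to this point the proposal is essentially the paper's proof.

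The gap is in your final step. The paper concludes simply by observing that, once the structural claim is in place, one may order the weight-$n$ basic commutators in the $y_\ell$ having non-trivial image so that $\xi$ is non-decreasing, placing the trivial-image ones freely — that is, the ordering is treated as the easy part, not an obstacle to be overcome. Your proposal instead treats the ordering as the hard part and, to resolve the interleaving you anticipate between $\chi$-blocks projecting to the same $\psi$, plans to ``exploit the freedom in the arbitrary ordering of same-weight-vector basic commutators in the $u$-list.'' But there is no such freedom: the list $u_1,u_2,\ldots$ is \emph{given} as a hypothesis of the lemma, not something you get to rebuild. So the proposed resolution is not available, and since the proposal explicitly stops at the plan (``I plan to exploit\,\ldots\,to show that a compatible global arrangement always exists''), the ordering step is left genuinely unfinished. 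To match the paper you should instead argue, or at least assert as the paper does, that the $v$'s of weight $n$ with non-trivial image can be sorted by $\xi$ compatibly with the constraint that same-weight-vector blocks be consecutive, and then slot the trivial-image ones in wherever the weight-vector constraint requires.
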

\begin{proof}
Conditions (a)--(c) hold automatically for the weight-$1$ basic commutators if we take these to be the $y_\ell$ in order, so by induction we may assume that all basic commutators $v_1,\ldots,v_m$ in the $y_\ell$ of weight less than $n$ have been chosen so that conditions (a)--(c) all hold. Suppose that $v_k=[v_i,v_j]$ is a basic commutator of weight $n$ in the $y_\ell$. We claim that $\rho(v_k)$ is either a trivial commutator or a basic commutator in the $x_\ell$. This is sufficient to prove the lemma, since we may then order those basic commutators $v$ of weight $n$ in the $y_\ell$ with $\rho(v)$ not trivial precisely so that condition (c) holds.

To prove the claim, note first that if either $\rho(v_i)$ or $\rho(v_j)$ is trivial then so is $\rho(v_k)$, so by condition (b) we may assume that
\begin{equation}\label{eq:rho}
\rho(v_k)=[u_{\xi(i)},u_{\xi(j)}].
\end{equation}
Since $v_k$ is basic in the $y_\ell$ we have $i<j$, and so by condition (c) we have either $\xi(i)=\xi(j)$ or $\xi(i)>\xi(j)$. If $\xi(i)=\xi(j)$ then \eqref{eq:rho} implies that $\rho(v_k)$ is trivial, and the claim holds. If $\xi(i)>\xi(j)$ we consider separately the cases in which $v_i$ has total weight $1$ and in which $v_i$ has total weight greater than $1$. If $v_i$ has total weight $1$ then $u_{\xi(i)}$ also has total weight $1$, and so \eqref{eq:rho} implies that $\rho(v_k)$ is basic and the claim holds. If $v_i$ has total weight greater than $1$ then we may write $v_i=[v_s,v_t]$, with $j\ge t$ since $v_k$ is basic. If either $\rho(v_s)$ or $\rho(v_t)$ is trivial then $\rho(v_k)$ is trivial and the claim holds. If not then condition (b) implies that $u_{\xi(i)}=[u_{\xi(s)},u_{\xi(t)}]$, while condition (c) implies that $\xi(j)\ge\xi(t)$, and so \eqref{eq:rho} implies that $\rho(v_k)$ is basic. This proves the claim, and hence the lemma.
\end{proof}

\begin{proof}[Proof of Proposition \ref{prop:non-basic.collected}]
For each $i=1,\ldots,r$ set $k_i=\sum_{j=1}^i\chi_j(\alpha)$, so that $|\chi(\alpha)|=k_r$. There are $k_r$ letters appearing in the commutator expression for $\alpha$. Let $\alpha'$ be the commutator obtained from $\alpha$ by relabelling these letters as $y_1,\ldots,y_{k_r}$ in turn, starting by labelling the $k_1$ copies of $x_1$ as $y_1,\ldots,y_{k_1}$, respectively, then relabelling  the $k_2-k_1$ copies of $x_2$ as $y_{k_1+1},\ldots,y_{k_2}$, respectively, and continuing in this fashion until we have relabelled the $k_r-k_{r-1}$ copies of $x_r$ as $y_{k_{r-1}+1},\ldots,y_{k_r}$, respectively. Note that the weight of each $y_i$ in $\alpha'$ is precisely $1$.

Writing $v_1,\ldots,v_n$ as the complete list of commutators of weight at most $s$ in the $y_i$ given by Lemma \ref{lem:relabel}, Theorem \ref{thm:collected.unique} implies that in the free $s$-step nilpotent group generated by the $y_i$ we have
\[
\alpha'=v_1^{m_1}\cdots v_n^{m_n}
\]
for some integers $m_i$. Lemma \ref{lem:weights.<=1} then implies that each $y_i$ has weight at least $1$ in each $v_j$ for which $m_j\ne0$.

Defining $\rho$ as in Lemma \ref{lem:relabel} we have $\rho(\alpha')=\alpha$, and hence
\[
\alpha=\prod_{i\,:\,\xi(i)\ne0}u_{\xi(i)}^{m_i}
\]
in $G$. However, it follows from condition (c) of Lemma \ref{lem:relabel} that this expression is of the form \eqref{eq:collected}, and so the proposition is proved.
\end{proof}

We close this section with an application of \cref{prop:non-basic.collected} that will be useful later. We noted in Remark \ref{rem:free.image} that a nilpotent progression $P=P_\ord(u_1,\ldots,u_d;L)$ is the image of a free nilpotent progression $\hat P=P(v;L)$ under the homomorphism mapping $v_i$ to $u_i$ for every $i$. It also follows from Proposition \ref{prop:nilp.prog.upper.tri} that $\hat P$ is in upper-triangular form. This implies that $[v_i^{\pm1},v_j^{\pm1}]$ has a $\hat P$-expression as defined at the beginning of Section \ref{sec:ordered.v.nil}, and Theorem \ref{thm:collected.unique} implies that this $\hat P$-expression is unique. We may therefore choose as a $P$-expression for $[u_i^{\pm1},u_j^{\pm1}]$ the expression obtained by relabelling each $v_k$ as $u_k$ in the $\hat P$-expression for $[v_i^{\pm1},v_j^{\pm1}]$. This expression is uniquely defined, and we call it the \emph{free $P$-expression} for $[u_i^{\pm1},u_j^{\pm1}]$

\begin{lemma}\label{lem:weights.agree}
Let $P=P_\ord(u_1,\ldots,u_d;L)$ be a nilpotent progression, and define weights $\zeta(i)$ as at the beginning of Section \ref{sec:ordered.v.nil} using the free $P$-expression for each commutator $[u_i^{\pm1},u_j^{\pm1}]$. Then for each $k$ we have $\zeta(k)=|\chi(k)|$.
\end{lemma}
\begin{proof}
We proceed by induction on $|\chi(k)|$, noting that if $|\chi(k)|=1$ then Proposition \ref{prop:non-basic.collected} implies that $\zeta(k)=1$. If $|\chi(k)|>1$ then by definition $u_k=[u_i,u_j]$ for some $i,j<k$, and so
\begin{align*}
\zeta(k)&\ge\zeta(i)+\zeta(j)&\text{(by definition of $\zeta$)}\\
    &=|\chi(i)|+|\chi(j)|&\text{(by induction)}\\
    &=|\chi(k)|&\text{(by definition of $\chi$).}
\end{align*}
On the other hand, for any $i',j'$ such that $u_k$ appears in the expression $[u_{i'}^{\pm1},u_{j'}^{\pm1}]$ we have
\begin{align*}
|\chi(k)|&\ge|\chi(i')|+|\chi(j')|&\text{(by Proposition \ref{prop:non-basic.collected})}\\
    &=\zeta(i')+\zeta(j')&\text{(by induction),}
\end{align*}
and so $|\chi(k)|\ge\zeta(k)$ by definition of $\zeta$.
\end{proof}

\section{Progressions and boxes in Lie algebras}\label{sec:coords}

We noted in Remark \ref{rem:free.image} that a nilpotent progression of rank $r$ and step $s$ is always the homomorphic image of a free nilpotent progression of rank $r$ and step $s$, which is by definition a subset of a free nilpotent group of rank $r$ and step $s$. As is well known, this free nilpotent group can in turn be embedded in a connected, simply connected nilpotent Lie group of rank $r$ and step $s$ \cite[Theorem 2.18]{rag}. We call this the \emph{free nilpotent Lie group} of rank $r$ and step $s$. It turns out that this gives us access to a fairly rich theory of additive combinatorics in nilpotent Lie groups. Such an approach has previously been exploited by Breuillard and Green \cite{bg} to prove a version of Theorem \ref{thm:nilp.Frei} for torsion-free nilpotent groups.

The central idea in the early theory of additive combinatorics in nilpotent Lie groups was to transfer everything to the Lie algebra and then apply the theory of abelian additive combinatorics. This idea was developed by Fisher, Katz and Peng \cite{fkp}, and then taken further in the Breuillard--Green paper \cite{bg}. It also played an implicit role in some earlier arguments of Tao in the Heisenberg group  \cite[Theorem 7.12]{tao.product.set}, which inspired the more general work of Fisher--Katz--Peng.

The main reason this approach is useful in the present paper is that the Lie algebra turns out to be a very convenient location in which to model the abelian geometry-of-numbers arguments we described in the introduction, as will become clear in Section \ref{sec:geom-of-nos}. However, we first need to develop some basic techniques for passing back and forth between a nilpotent Lie group and its Lie algebra, and that is the purpose of the present section.

It is well known that if $G$ is a simply connected nilpotent Lie group with Lie algebra $\g$ then there are mutually inverse diffeomorphisms $\exp:\g\to G$ and $\log:G\to\g$ \cite{bourbaki}. One can describe the group operation in $G$ in terms of addition and the Lie bracket in $\g$ via the \emph{Baker--Campbell--Hausdorff formula}, which states that for elements $X,Y\in\g$ we have
\begin{equation}\label{eq:bch}
\textstyle\exp(X)\exp(Y)=\exp(X+Y+\frac{1}{2}[X,Y]+\frac{1}{12}[X,[X,Y]]+\cdots).
\end{equation}
The precise values of the rationals appearing later in the series \eqref{eq:bch} are not important for our arguments; all that matters is that in a nilpotent Lie group the series is finite and depends only on the nilpotency class of the group.

We start in a fairly general setting. If $e_1,\ldots,e_d$ is a basis for a real vector space $V$ and $L_1,\ldots,L_d$ are non-negative integers then given a subring $A\subset \R$ we define the \emph{box} $B_A(e;L)=B_A(e_1,\ldots,e_d;L_1,\ldots,L_d)$ via
\[
B_A(e;L)=\{\ell_1e_1+\cdots+\ell_de_d:\ell_i\in A ,|\ell_i|\le L_i\}.
\]
We will be interested in the cases where $e_1,\ldots,e_d$ is a basis of a Lie algebra (with the Lie algebra is viewed as a real vector space), and $A=\Z, \Q$ or $\R$.

If $V$ is the Lie algebra of a simply connected nilpotent Lie group then, given $C>0$, we say that $(e;L)=(e_1,\ldots,e_d;L_1,\ldots,L_d)$ is in \emph{$C$-upper-triangular form} if whenever $i<j$ we have
\[
[e_i,e_j]\in B_\Z\left(e_{j+1},\ldots,e_d;\textstyle{\frac{CL_{j+1}}{L_iL_j},\ldots,\frac{CL_d}{L_iL_j}}\right).
\]
If $\pi$ is a homomorphism from $\langle\exp B_\Z(e;L)\rangle$ to some other group then $B_\Z(e;L)$ is said to be \emph{$m$-proper with respect to $\pi$} if the elements $\pi(\exp(\ell_1e_1+\cdots+\ell_de_d))$ are all distinct as the $\ell_i$ range over those integers with $|\ell_i|\le mL_i$.

Note that if a tuple $(e_1,\ldots,e_d;L_1,\ldots,L_d)$ is in $C$-upper-triangular form for some $C>0$ then the Lie algebra generated by $e_1,\ldots,e_d$ is nilpotent of step at most $d$, meaning that the set of terms with non-zero coefficients in the Baker--Campbell--Hausdorff formula \eqref{eq:bch} is a finite set depending only on $d$.

The main result of this section is as follows.

\begin{prop}\label{prop:dilates}
Let $e_1,\ldots,e_d$ be a basis of the Lie algebra $\g$ of a connected, simply connected nilpotent Lie group $G$. Let $L_1,\ldots,L_d$ be positive integers such that $(e;L)$ is in $C$-upper-triangular form, and suppose that $\exp\langle e_1,\ldots,e_d\rangle$ is a subgroup of $G$. Then, writing $u_i=\exp e_i$, we have
\begin{equation}\label{eq:dilates.concl.1}
P_\ord(u;L)\subset \exp B_\Z(e;O_{C,d}(L))
\end{equation}
and
\begin{equation}\label{eq:dilates.concl.2}
\exp B_\Z(e;L)\subset P_\ord(u;O_{C,d}(L)).
\end{equation}
Moreover, $P_\ord(u;L)$ is in $O_{C,d}(1)$-upper-triangular form. Finally, there exists a function $p_{C,d}:(0,\infty)\to(0,\infty)$ such that if $P_\ord(u;L)$ is $p_{C,d}(m)$-proper with respect to some homomorphism $\pi:\langle P_\ord(u;L)\rangle\to N$ then $B_\Z(e;L)$ is $m$-proper with respect to $\pi$, and if $B_\Z(e;L)$ is $p_{C,d}(m)$-proper with respect to $\pi$ then $P_\ord(u;L)$ is $m$-proper with respect to $\pi$.
\end{prop}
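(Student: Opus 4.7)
The plan is to run everything through the Baker--Campbell--Hausdorff formula, which, as noted just before the proposition, is a finite sum with $O_d(1)$ terms since the Lie algebra generated by $e_1,\ldots,e_d$ is nilpotent of step at most $d$. All four assertions then reduce to tracking how the $C$-upper-triangular bounds propagate through iterated Lie brackets.

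First I would prove \eqref{eq:dilates.concl.1} by induction on $d$. Write $u_1^{\ell_1}\cdots u_d^{\ell_d}=\exp(\ell_1 e_1)\cdot(u_2^{\ell_2}\cdots u_d^{\ell_d})$, and apply the inductive hypothesis to the sub-tuple $(e_2,\ldots,e_d;L_2,\ldots,L_d)$, which inherits the $C$-upper-triangular form, to write the second factor as $\exp(Y)$ with $Y\in B_\Z(e_2,\ldots,e_d;O_{C,d}(L_2),\ldots,O_{C,d}(L_d))$. Then BCH gives $\exp(\ell_1 e_1)\exp(Y)=\exp(\ell_1 e_1+Y+R)$ where $R$ is a sum of iterated brackets of $\ell_1 e_1$ and $Y$ of length $\ge 2$. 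The key estimate is that for every such bracket, every $e_k$-coefficient is bounded by $O_{C,d}(L_k)$: at the first level, $[e_i,e_j]$ contributes $e_k$-coefficient at most $CL_k/L_iL_j$, and after multiplying by the $|\ell_1|\le L_1$ and $|Y_j|\le O_{C,d}(L_j)$ factors this yields $O_{C,d}(L_k)$; at higher levels one iterates the upper-triangular bound, noting that $L_i\ge 1$, and the length of each bracket is bounded by $d$, so only $O_d(1)$ terms appear. Relation \eqref{eq:dilates.concl.2} is proved by inverting the same correspondence: given $Z=\sum\ell_ie_i$ with $|\ell_i|\le L_i$, extract the $e_1$-component as $u_1^{\ell_1}\cdot\exp(Z')$ by running BCH in reverse, and bound the correction terms of $Z'$ in the same manner.

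For assertion (3), I would compute $[u_i^{\pm1},u_j^{\pm1}]=\exp([e_i^{\pm1},e_j^{\pm1}]+R')$ where $R'$ is a sum of iterated brackets in $e_i,e_j$ of length $\ge 3$. The leading term $[e_i,e_j]$ lies in $B_\Z(e_{j+1},\ldots,e_d;CL_k/L_iL_j)$ by hypothesis, and by iterating the upper-triangular bound once more (as in the previous step) each iterated bracket in $R'$ has $e_k$-coefficient bounded by a constant multiple of $L_k/L_iL_j$, so the full exponent lies in $B_\Z(e_{j+1},\ldots,e_d;O_{C,d}(L_k/L_iL_j))$. Applying \eqref{eq:dilates.concl.2} to the sub-tuple $(e_{j+1},\ldots,e_d)$ with these scaled side lengths (this sub-tuple is itself in $C$-upper-triangular form) then writes the commutator as an ordered product in $u_{j+1},\ldots,u_d$ with the required side lengths.

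Finally, for the properness equivalence, the point is that \eqref{eq:dilates.concl.1} and \eqref{eq:dilates.concl.2} are realized by explicit polynomial maps $\Phi,\Psi:\Z^d\to\Z^d$ (in $\ell_1,\ldots,\ell_d$) with coefficients depending only on $C$ and $d$, such that $u_1^{\ell_1}\cdots u_d^{\ell_d}=\exp\bigl(\sum_k\Phi_k(\ell)e_k\bigr)$ and $\exp\bigl(\sum_k\ell_ke_k\bigr)=u_1^{\Psi_1(\ell)}\cdots u_d^{\Psi_d(\ell)}$, each mapping $B_\Z(e;L)$ into an $O_{C,d}(1)$-dilate of itself. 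Hence a failure of $m$-properness of one box transfers, via $\Phi$ or $\Psi$, to a failure of $p_{C,d}(m)$-properness of the other (with respect to the same $\pi$), once $p_{C,d}(m)$ is taken large enough to absorb these polynomial dilates. The main obstacle is the bookkeeping in the BCH calculation underlying \eqref{eq:dilates.concl.1}: one must verify inductively that the $e_k$-coefficient of every iterated bracket appearing in any number of successive applications of BCH is bounded by $O_{C,d}(L_k)$, with an implied constant depending only on $C$ and $d$, and this is where the $L_i\ge 1$ hypothesis and the finiteness of BCH in step $\le d$ do the real work.
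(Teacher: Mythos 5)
Your proposal takes essentially the same route as the paper's proof: everything is channelled through the Baker--Campbell--Hausdorff formula together with the estimate (packaged in the paper as Lemma \ref{lem:upper-tri}) that iterated brackets in a $C$-upper-triangular tuple have $e_k$-coefficient $O_{C,d}(L_k/\prod L_{i_\ell})$; parts \eqref{eq:dilates.concl.1} and \eqref{eq:dilates.concl.2} are proved by the same peel-off-one-coordinate induction (which the paper runs via Lemmas \ref{lem:QL.group} and \ref{lem:upper-tri.dilate}); part (3) is an application of \eqref{eq:dilates.concl.2} to the BCH exponent of $[u_i^{\pm1},u_j^{\pm1}]$ (the paper's Lemma \ref{lem:comms.1}); and the properness transfer follows from \eqref{eq:dilates.concl.1} and \eqref{eq:dilates.concl.2} applied to the dilated tuple, exactly as you describe. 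Two details you elide but should be aware of: the BCH correction terms have \emph{rational} coefficients, so the exponent of $[u_i^{\pm1},u_j^{\pm1}]$ a priori only lies in $B_\Q$, and you need the hypothesis that $\exp\langle e_1,\ldots,e_d\rangle$ is a group to upgrade this to $B_\Z$; and the properness transfer implicitly requires that the coordinate maps $\ell\mapsto u_1^{\ell_1}\cdots u_d^{\ell_d}$ and $\ell\mapsto\exp(\sum_i\ell_ie_i)$ are injective on $\Z^d$ before projecting under $\pi$, which the paper supplies via the strong Mal'cev coordinates of Lemma \ref{lem:malcev.basis}.
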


Before we present the main content of the proof, let us mention some standard theory of nilpotent Lie groups that plays an important role in our arguments. It is well known that if $G$ is a connected, simply connected nilpotent Lie group with Lie algebra $\g$ then one can use certain bases of $\g$ to define certain coordinate systems on $G$. For our purposes we record the following.
\begin{lemma}\label{lem:malcev.basis}
Let $G$ be a connected, simply connected nilpotent Lie group with Lie algebra $\g$, and let $e_1,\ldots,e_d$ be a basis for $\g$ such that whenever $i<j$ we have
\begin{equation}\label{eq:malcev.basis}
[e_i,e_j]\in\Span_\R(e_j,\ldots,e_d).
\end{equation}
Then $\exp:\g\to G$ is a bijection and, writing $u_i=\exp e_i$ for $i=1,\ldots,d$, every element of $G$ has a unique expression of the form $u_1^{\ell_1}\cdots u_d^{\ell_d}$ with $\ell_i\in\R$.
\end{lemma}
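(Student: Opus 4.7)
The plan is to prove the two assertions in turn, using the standard fact that for a connected, simply connected nilpotent Lie group $G$ the exponential map $\exp:\g\to G$ is a (global) diffeomorphism; this appears e.g.\ as \cite[Theorem 2.10]{rag}. This gives the first statement directly, so the main work will be establishing the unique decomposition $g=u_1^{\ell_1}\cdots u_d^{\ell_d}$.

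The key structural observation is that hypothesis \eqref{eq:malcev.basis} implies that the nested subspaces $\g_i:=\Span_\R(e_i,\ldots,e_d)$ form a descending chain of ideals in $\g$. Indeed, for any $j\ge i$ and any $k$, the bracket $[e_k,e_j]$ lies in $\Span_\R(e_{\max(k,j)},\ldots,e_d)\subset\g_i$. Setting $G_i:=\exp(\g_i)$, each $G_i$ is therefore a closed connected normal subgroup of $G$, and $G_i$ is itself a simply connected nilpotent Lie group with Lie algebra $\g_i$ (since $\exp$ restricts to a diffeomorphism $\g_i\to G_i$, as the image of a linear subspace under the global diffeomorphism $\exp$). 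In particular $G/G_2$ is a simply connected nilpotent (abelian) Lie group with Lie algebra $\g/\g_2\cong\R$, hence isomorphic to $\R$, with the isomorphism sending $u_1 G_2$ to a generator.

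I would then prove the second assertion by induction on $d$. The case $d=1$ is immediate, since $\g=\R e_1$ and $\exp$ identifies $\R$ with $G$. For the inductive step, given $g\in G$ consider the image of $g$ in $G/G_2\cong\R$: this gives a unique $\ell_1\in\R$ with $gG_2=u_1^{\ell_1}G_2$, and hence $u_1^{-\ell_1}g\in G_2$. The basis $e_2,\ldots,e_d$ of $\g_2$ still satisfies the upper-triangular hypothesis \eqref{eq:malcev.basis}, so the inductive hypothesis applied inside $G_2$ produces unique $\ell_2,\ldots,\ell_d\in\R$ with $u_1^{-\ell_1}g=u_2^{\ell_2}\cdots u_d^{\ell_d}$, giving both existence and uniqueness of the desired expression.

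The only real subtlety is verifying that the $\g_i$ are ideals and that $G_i=\exp(\g_i)$ really is a closed subgroup to which the induction applies; once these are in hand, everything is an easy quotient argument. I do not anticipate a serious obstacle, since the bijectivity of $\exp$ on any simply connected nilpotent Lie group is a standard input and the upper-triangular assumption is precisely what is needed to make the filtration by ideals work.
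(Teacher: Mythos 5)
Your argument is correct, but it takes a more self-contained route than the paper does. The paper dispatches this lemma in one line by observing that \eqref{eq:malcev.basis} is precisely the definition of a strong Mal'cev basis in Corwin--Greenleaf \cite{cor-gre}, and then citing \cite[Theorem 1.2.1(a)]{cor-gre} (for bijectivity of $\exp$) and \cite[Proposition 1.2.7(c)]{cor-gre} (for the diffeomorphism $\R^d\to G$, $(\ell_1,\ldots,\ell_d)\mapsto u_1^{\ell_1}\cdots u_d^{\ell_d}$). You instead reconstruct the relevant part of that theory from scratch: the filtration $\g=\g_1\supset\g_2\supset\cdots\supset\g_d\supset 0$ by ideals, the normal closed subgroups $G_i=\exp(\g_i)$, and the induction peeling off one coordinate at a time via $G/G_2\cong\R$. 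Both are standard, and yours has the pedagogical advantage of exposing why the upper-triangular hypothesis is exactly what makes the ``second-kind'' Mal'cev coordinates work, at the cost of invoking a handful of background facts (that $\exp$ of a Lie subalgebra of a nilpotent Lie algebra is a closed simply connected subgroup, via Baker--Campbell--Hausdorff; that a quotient of a simply connected Lie group by a connected closed normal subgroup is simply connected) which you gesture at but do not fully spell out. Those facts are all genuinely standard and cause no trouble, so there is no gap; the paper's choice to cite the consolidated statements in Corwin--Greenleaf is simply the shorter path to the same place.
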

\begin{proof}
The assumption \eqref{eq:malcev.basis} says that $e_1,\ldots,e_d$ is a \emph{strong Mal'cev basis} for $\g$ in the sense of \cite[\S1.1.13]{cor-gre}, and so the lemma follows from \cite[Theorem 1.2.1 (a)]{cor-gre} and \cite[Proposition 1.2.7 (c)]{cor-gre}.
\end{proof}

Let us also say a few words about the assumption in Proposition \ref{prop:dilates} that $\exp\langle e_1,\ldots,e_d\rangle$ is a group, which at first glance might appear to be somewhat restrictive. The key reason why it is not is the following lemma, which shows that if $\Lambda=\langle e_1,\ldots,e_d\rangle$ satisfies $[\Lambda,\Lambda]\subset\Lambda$ (which is true in particular if $(e;L)$ is in upper-triangular form) then this assumption holds, at least up to finite index in some sense.
\begin{lemma}\label{lem:dilates.subgroup}
Let $G$ be a connected, simply connected nilpotent Lie group of step at most $s$ with Lie algebra $\g$, and suppose that $\Lambda$ is an additive subgroup of $\g$ with $[\Lambda,\Lambda]\subset\Lambda$. Then there exists $Q=Q_s\in\N$ such that $\exp(Q\cdot\Lambda)$ is a subgroup of $G$.
\end{lemma}
\begin{proof}
We adapt an argument that appears throughout the paper \cite{bg}. Taking $Q_s$ to be the lowest common multiple of the denominators of the rationals appearing in those terms with weight at most $s$ in the Baker--Campbell--Hausdorff formula \eqref{eq:bch}, the lemma follows from that formula.
\end{proof}

The following proposition, which we prove after Lemma \ref{lem:up-closed} below, then shows that if $P_\ord(u;L)$ is a free nilpotent progression viewed as a subset of the corresponding free nilpotent Lie group then, writing $e_i=\log u_i$, and again passing to finite index in some sense, the tuple $(e;L)$ is in upper-triangular form, and hence, in particular, generates a lattice to which Lemma \ref{lem:dilates.subgroup} applies.

\begin{prop}\label{prop:nilbox.upper-tri}
Let $u_1,\ldots,u_d$ be a complete ordered list of basic commutators in the free $s$-step nilpotent group $N_{r,s}$ on $r$ generators, viewed as a subset of the corresponding free nilpotent Lie group, and write $e_i=\log u_i$ for each $i$. Let $L_1,\ldots,L_r$ be positive integers, and for $i=r+1,\ldots,d$ write $L_i=L^{\chi(u_i)}$. Then there exist integers $Q_1,\ldots,Q_d\ll_{r,s}1$ such that $(Q_1e_1,\ldots,Q_de_d;L)$ is in $O_{r,s}(1)$-upper-triangular form.
\end{prop}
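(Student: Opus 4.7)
The plan is to work inside the free nilpotent Lie algebra $\g$ of rank $r$ and step $s$ on generators $y_1=\log x_1,\ldots,y_r=\log x_r$, exploiting its weight grading. Write $\g=\bigoplus_\chi\g_\chi$, where $\g_\chi$ is spanned by the basic Lie commutators of weight vector $\chi$, and for the componentwise partial order put $\g^{\geq\chi}:=\bigoplus_{\chi'\geq\chi}\g_{\chi'}$. The Lie bracket is graded, $[\g_\chi,\g_{\chi'}]\subset\g_{\chi+\chi'}$, so in particular $[\g^{\geq\chi},\g^{\geq\chi'}]\subset\g^{\geq\chi+\chi'}$. My main aim will be to show that $e_i\in\g^{\geq\chi(u_i)}$ for every $i$; from this it will follow quickly that $[e_i,e_j]$ is a rational linear combination of those $e_k$ with $\chi(u_k)\geq\chi(u_i)+\chi(u_j)$, and then clearing denominators will yield the desired conclusion.

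The key step, and the main technical obstacle, is to establish $e_i\in\g^{\geq\chi(u_i)}$, which I will prove by induction on the total weight $|\chi(u_i)|$. The base case $u_i=x_j$ is immediate since $e_i=y_j\in\g_{\chi(x_j)}$. For the inductive step I write $u_i=[u_a,u_b]$ and observe that $e_i=\log[\exp e_a,\exp e_b]$ admits, via iterated applications of the Baker--Campbell--Hausdorff formula \eqref{eq:bch}, an expansion as a finite sum of iterated Lie brackets of $e_a$ and $e_b$ with rational coefficients (the Hausdorff commutator series), in which every non-vanishing term contains at least one copy of $e_a$ and at least one of $e_b$. By the inductive hypothesis, $e_a\in\g^{\geq\chi(u_a)}$ and $e_b\in\g^{\geq\chi(u_b)}$, so any such bracket with $m\geq 1$ copies of $e_a$ and $n\geq 1$ copies of $e_b$ lies in $\g^{\geq m\chi(u_a)+n\chi(u_b)}\subset\g^{\geq\chi(u_i)}$, completing the induction.

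Once this grading statement is available, $[e_i,e_j]\in\g^{\geq\chi(u_i)+\chi(u_j)}$. Comparing dimensions (the number of basic Lie commutators of each weight vector coincides with the number of basic group commutators of that weight vector), the set $\{e_k:k\in I\}$ with $I:=\{k:\chi(u_k)\geq\chi(u_i)+\chi(u_j)\}$ is a linearly independent subset of $\g^{\geq\chi(u_i)+\chi(u_j)}$ of the correct cardinality, hence a basis. So $[e_i,e_j]=\sum_{k\in I}a_{ijk}e_k$ for some rationals $a_{ijk}$ that depend only on $r,s$. I let $Q$ be a common denominator of these finitely many rationals, so $Q\ll_{r,s}1$, and set $Q_1=\cdots=Q_d=Q$, giving $[Qe_i,Qe_j]=\sum_{k\in I}(Qa_{ijk})(Qe_k)$ with integer coefficients of size $\ll_{r,s}1$. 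Since basic commutators are listed in order of non-decreasing total weight, $I\subset\{j+1,\ldots,d\}$; and since $L_k=L^{\chi(u_k)}$ with each $L_i\geq 1$, for $k\in I$ we have $L_k/(L_iL_j)=L^{\chi(u_k)-\chi(u_i)-\chi(u_j)}\geq 1$, so the coefficients $Qa_{ijk}$ are automatically bounded by a suitable $O_{r,s}(1)$ multiple of $L_k/(L_iL_j)$, producing the required $O_{r,s}(1)$-upper-triangular form.
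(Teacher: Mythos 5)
You take a genuinely different route from the paper, and it is correct. The paper's proof rests on Lemma \ref{lem:up-closed}, which is established by a delicate downward double induction over two intertwined assertions (one about Lie brackets, one about logarithms of products $u_{i_1}^{\ell_{i_1}}\cdots u_{i_p}^{\ell_{i_p}}$), and which in turn invokes the group-theoretic Proposition \ref{prop:non-basic.collected}. Your argument instead exploits the $\N^r$-grading of the free $s$-step nilpotent Lie algebra by Hall brackets: a single induction on total weight, using only the fact that every non-vanishing term of the Hausdorff series $\log[\exp X,\exp Y]$ features both $X$ and $Y$ (which is essentially the substance of Lemma \ref{lem:comms.1}), gives $e_i\in\g^{\ge\chi(u_i)}$; a dimension count -- Witt's formula assigns identical multiplicities to basic group commutators and to basic Lie brackets of a given weight vector -- then identifies $\Span_\Q\{e_k:\chi(u_k)\ge\chi\}$ with the rational points of $\g^{\ge\chi}$, so that $[e_i,e_j]$ has a rational expansion in the $e_k$ with $\chi(u_k)\ge\chi(u_i)+\chi(u_j)$. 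This sidesteps the heavier machinery and avoids any appeal to Proposition \ref{prop:non-basic.collected} in this part of the paper. You do lean on some standard Hall-basis facts that you assert rather than argue (the matching of graded dimensions between basic group commutators and basic Lie brackets; linear independence of the $e_k$; rationality of the change-of-basis coefficients); this is a mild gap, though the paper itself also uses several of these implicitly. Both proofs then conclude identically: clear denominators and observe that $L_k/(L_iL_j)=L^{\chi(u_k)-\chi(u_i)-\chi(u_j)}\ge1$ for the relevant $k$.
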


Finally, the following lemma shows how to deal with the caveat `up to finite index' attached to the previous two results.

\begin{lemma}\label{lem:coset.reps}
Let $u_1,\ldots,u_d$ be elements of a group and let $L_1,\ldots,L_d$ be positive integers such that $(u;L)$ is in $C$-upper-triangular form. Suppose moreover that every element of $\langle u_1,\ldots,u_d\rangle$ has a unique expression of the form $u_1^{\ell_1}\cdots u_d^{\ell_d}$ with $\ell_i\in\Z$. Let $Q_1,\ldots,Q_d$ be positive integers. Then
\[
P_\ord(u;L)\subset P_\ord(u;Q)\cdot P_\ord(u_1^{Q_1},\ldots,u_d^{Q_d};O_{C,d,Q}(L)).
\]
\end{lemma}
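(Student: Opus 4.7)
The plan is to exploit the uniqueness of the normal form $u_1^{k_1}\cdots u_d^{k_d}$ to recast the desired factorisation $g=vw$ as a triangular system of integer equations. Given $g=u_1^{\ell_1}\cdots u_d^{\ell_d}\in P_\ord(u;L)$, I seek $v=u_1^{a_1}\cdots u_d^{a_d}$ with $|a_i|\le Q_i$ and $w=u_1^{Q_1c_1}\cdots u_d^{Q_dc_d}$ with $|c_i|\le O_{C,d,Q}(L_i)$ such that $vw=g$. Writing $k_i:=Q_ic_i$, this amounts to finding integers $a_i,k_i$ with $Q_i\mid k_i$, $|a_i|\le Q_i$, and $|k_i|\le O_{C,d,Q}(L_i)$ so that the normal form of $u_1^{a_1}\cdots u_d^{a_d}u_1^{k_1}\cdots u_d^{k_d}$ is precisely $u_1^{\ell_1}\cdots u_d^{\ell_d}$.

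The key structural observation, established by the same collecting process as in the proof of Lemma~\ref{lem:lengths.powers}, is that the $i$-th exponent of $vw$ in normal form has the triangular shape
\[
(vw)_i \;=\; a_i + k_i + p_i\bigl(a_1,\ldots,a_{i-1},k_1,\ldots,k_{i-1}\bigr),
\]
where $p_i$ is an integer-valued polynomial determined by the group. That $p_i$ depends only on the preceding $a_j,k_j$ (with $j<i$) is a consequence of the $C$-upper-triangular form: any commutator $[u_j,u_l]$ arising in the collecting process has $j<l$ and lies in $\langle u_{l+1},\ldots,u_d\rangle$, so it can contribute to the $u_i$-coordinate only when $l<i$ (whence also $j<i$), and iterated commutators inherit the same constraint.

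I then build $a_i$ and $k_i$ one index at a time. With $a_j,k_j$ determined for $j<i$, the integer $p_i$ becomes known; since the interval $[-Q_i,Q_i]$ meets every residue class modulo $Q_i$, I may pick $a_i$ with $a_i\equiv\ell_i-p_i\pmod{Q_i}$ and $|a_i|\le Q_i$, and then $k_i:=\ell_i-a_i-p_i$ is automatically a multiple of $Q_i$. By construction the resulting $v,w$ satisfy $(vw)_i=\ell_i$ for every $i$, so $vw=g$, and the uniqueness of the normal form guarantees that $w$ really is the element $u_1^{Q_1c_1}\cdots u_d^{Q_dc_d}$ with $c_i=k_i/Q_i$.

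The main obstacle is the bound $|k_i|\le O_{C,d,Q}(L_i)$, which via $|k_i|\le L_i+Q_i+|p_i|$ reduces to showing $|p_i|\le O_{C,d,Q}(L_i)$. Each basic commutator $[u_j,u_l]$ with $j<l<i$ has $u_i$-coefficient bounded by $O_C(L_i/L_jL_l)$ thanks to the $C$-upper-triangular form, and the iterated commutators contributing to $p_i$ are controlled by the commutator calculus underlying Lemma~\ref{lem:lengths.powers}. Plugging in the inductive bounds $|a_j|\le Q_j$ and $|k_j|\le O_{C,d,Q}(L_j)$ into each such term, the products of $L_j$'s arising from the variable bounds cancel the $L_jL_l\cdots$ denominators from the structure constants, and a careful (if slightly tedious) tracking of the finitely many resulting monomials leaves $|p_i|\le O_{C,d,Q}(L_i)$ as required.
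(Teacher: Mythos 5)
Your proof is correct and takes essentially the same route as the paper's: both exploit the triangular dependence of the collected exponents on earlier coordinates and perform division with remainder by $Q_i$ one coordinate at a time, invoking Lemma~\ref{lem:lengths.powers}-style commutator estimates to bound the carry term. The paper packages the coordinate-by-coordinate step as an induction on $d$ working modulo the central generator $u_d$ (so the ``carry'' is the single residual power $u_d^p$), whereas you set out the triangular system $(vw)_i = a_i + k_i + p_i(a_{<i},k_{<i})$ explicitly, but the substance is the same.
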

\begin{remark*}
Lemma \ref{lem:malcev.basis} implies that Lemma \ref{lem:coset.reps} applies in the setting of Proposition \ref{prop:dilates} (once we have proved that proposition).
\end{remark*}

The utility of Lemma \ref{lem:coset.reps} of course lies in the fact that $|P_\ord(u;Q)|\ll_{d,Q}1$.

\begin{proof}[Proof of Lemma \ref{lem:coset.reps}]
Let $\ell_1,\ldots,\ell_d$ be integers satisfying $|\ell_i|\le L_i$. We may assume by induction on $d$ that the lemma holds modulo $\langle u_d\rangle$, which is to say that there exist $n_1,\ldots,n_{d-1}$ with $|n_i|\ll_{C,d,Q} L_i$ and $r_1,\ldots,r_{d-1}$ with $|r_i|\le Q_i$, as well as some $p\in\Z$ such that
\begin{equation}\label{eq:mod.u_d}
u_1^{\ell_1}\cdots u_{d-1}^{\ell_{d-1}}=u_1^{r_1}\cdots u_{d-1}^{r_{d-1}}u_1^{n_1Q_1}\cdots u_{d-1}^{n_{d-1}Q_{d-1}}u_d^p,
\end{equation}
and in particular
\begin{equation}\label{eq:fin.index}
u_1^{\ell_1}\cdots u_d^{\ell_d}=u_1^{r_1}\cdots u_{d-1}^{r_{d-1}}u_1^{n_1Q_1}\cdots u_{d-1}^{n_{d-1}Q_{d-1}}u_d^{p+\ell_d}.
\end{equation}
It follows from \eqref{eq:mod.u_d}, \eqref{eq:lengths.powers}, Lemma \ref{lem:lengths.powers}, the upper-triangular form and the uniqueness of the expression $u_d^p$ that $|p|\ll_{C,d,Q}L_d$, and so there exists $n_d\in\Z$ with $|n_d|\ll_{C,d,Q}L_d$ and $r_d\in\Z$ with $|r_d|\le Q_d$ such that $p+\ell_d=n_dQ_d+r_d$. The upper-triangular form implies that $u_d$ is central, and it then follows from \eqref{eq:fin.index} that
\[
u_1^{\ell_1}\cdots u_d^{\ell_d}=u_1^{r_1}\cdots u_d^{r_d}u_1^{n_1Q_1}\cdots u_d^{n_dQ_d},
\]
and so the lemma is proved.
\end{proof}

We now pass to the main details of the proofs of Propositions \ref{prop:dilates} and \ref{prop:nilbox.upper-tri}. Given elements $v_1,\ldots,v_r$ of a Lie algebra we define the \emph{(formal) Lie brackets} in the $v_i$ analogously to how we define formal commutators. Specifically, we define every $v_j$ to be a Lie bracket of weight $1$ in the $v_i$, and for every pair $\alpha,\alpha'$ of Lie brackets of weights $\omega,\omega'$, respectively, we define $[\alpha,\alpha']$ to be a Lie bracket in the $v_i$ of weight $\omega+\omega'$.

Following \cite[Definition 3.2]{nilp.frei}, we also define certain functions mapping a set of letters to a commutator or Lie bracket in those letters. We momentarily treat brackets as formal objects that can be interpreted either as commutators or as Lie brackets depending on the context. Given letters $v_1,\ldots,v_r$, the function $\alpha_i$ defined by $\alpha_i(v_1,\ldots,v_r)=v_i$ is a bracket form of weight $1$, and then given two bracket forms $\alpha,\alpha'$ of weights $\omega,\omega'$, respectively, the function $[\alpha,\alpha']$ defined by $[\alpha,\alpha'](v_1,\ldots,v_r)=[\alpha(v_1,\ldots,v_r),\alpha'(v_1,\ldots,v_r)]$ is a bracket form of weight $\omega+\omega'$. Thus, for example, the function $\alpha:(w_1,w_2)\mapsto[w_1,[w_1,w_2]]$ is a bracket form of weight $3$, and if $x_1,x_2$ are elements of a group then $\alpha(x_1,x_2)$ is the commutator $[x_1,[x_1,x_2]]$, whilst if $v_1,v_2$ are elements of a Lie algebra then $\alpha(v_1,v_2)$ is the Lie bracket $[v_1,[v_1,v_2]]$.
\begin{lemma}\label{lem:comms.1}
Let $\alpha$ be a bracket form of weight $m$. Then there exists a sequence $\beta_1,\beta_2,\ldots$ of bracket forms of weight greater than $m$, of which at most finitely many have any given weight, and rationals $q_1,q_2,\ldots$ such that if $x_1,\ldots,x_m$ are elements of a connected, simply connected nilpotent Lie group, and $v_i=\log x_i$ are elements of the corresponding Lie algebra, then
\[
\log\alpha(x_1,\ldots,x_m)=\alpha(v_1,\ldots,v_m)+q_1\beta_1(v_1,\ldots,v_m)+q_2\beta_2(v_1,\ldots,v_m)+\cdots,
\]
with each $\beta_j$ featuring each $v_i$ at least once.
\end{lemma}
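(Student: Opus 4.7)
The plan is to induct on the weight $m$ of $\alpha$, with the Baker--Campbell--Hausdorff formula \eqref{eq:bch} doing most of the work. The base case $m=1$ is immediate: $\alpha(v_1)=v_1$ and $\log x_1 = v_1$, so no correction terms are required.

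For the inductive step I decompose $\alpha = [\alpha_1,\alpha_2]$ into bracket forms of strictly smaller weights $m_1,m_2$ with $m_1+m_2=m$, whose variable sets together exhaust $\{v_1,\ldots,v_m\}$. By the inductive hypothesis,
\[ X_i := \log \alpha_i(x) = \alpha_i(v) + \sum_j q_j^{(i)}\beta_j^{(i)}(v), \]
with each $\beta_j^{(i)}$ of weight $>m_i$ featuring every variable of $\alpha_i$ at least once. Applying \eqref{eq:bch} three times to $\exp(-X_1)\exp(-X_2)\exp(X_1)\exp(X_2)$ yields the standard commutator expansion
\[ \log[\exp X_1,\exp X_2] = [X_1,X_2] + \tfrac{1}{2}[X_1,[X_1,X_2]] - \tfrac{1}{2}[[X_1,X_2],X_2] + \cdots, \]
in which every term is an iterated Lie bracket involving both $X_1$ and $X_2$ at least once, and only finitely many terms of each weight occur.

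I then substitute the inductive expressions for $X_1,X_2$ into this series and expand multilinearly. The leading contribution $[X_1,X_2]$ produces $[\alpha_1,\alpha_2]=\alpha$ together with cross terms of the form $[\alpha_1,\beta_j^{(2)}]$, $[\beta_j^{(1)},\alpha_2]$ and $[\beta_j^{(1)},\beta_k^{(2)}]$, all of weight strictly exceeding $m$. Every higher iterated bracket contributes terms of weight at least $m_1+m_1+m_2>m$. Because every bracket in the commutator series involves both $X_1$ and $X_2$, and because $\alpha_1,\alpha_2$ between them feature every $v_i$, each resulting bracket form in the $v_i$ features every variable at least once. Finiteness of the number of $\beta_j$ of each given weight follows from the corresponding finiteness in the commutator series together with the inductive hypothesis.

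The main obstacle, and essentially the only non-routine point, is establishing the structural properties of the commutator series used above --- in particular, that each term is a pure Lie bracket featuring both $X_1$ and $X_2$, and that only finitely many terms of each weight arise. This follows fairly directly from applying \eqref{eq:bch} to the four-fold product, but requires some bookkeeping; once it is in hand, the rest of the argument is a routine verification of substitutions and weight bounds.
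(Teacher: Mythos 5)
Your proof is correct and takes essentially the same route as the paper: induct on the weight, decompose $\alpha=[\alpha_1,\alpha_2]$ into sub-forms of strictly smaller weight, and apply the Baker--Campbell--Hausdorff formula \eqref{eq:bch} to the string $\exp(-X_1)\exp(-X_2)\exp(X_1)\exp(X_2)$, then track weights and variable occurrences through the multilinear expansion. The paper's own proof is a one-line version of exactly this; your write-up supplies the bookkeeping (in particular, that every term of the commutator BCH series is a pure Lie bracket involving both $X_1$ and $X_2$, so that the leading term is $\alpha$, the correction terms have weight $>m$, and each variable of $\alpha_1\cup\alpha_2$ propagates into every correction term) that the paper takes as read.
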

\begin{proof}
The result is trivial for $m=1$, so by induction we may assume that the result is true for all bracket forms of weight less than $m$. However, by definition we have $\alpha=[\gamma_1,\gamma_2]$ for some forms $\gamma_1,\gamma_2$ of weight less than $m$, and so applying the Baker--Campbell--Hausdorff formula \eqref{eq:bch} to the string $\gamma_1^{-1}\gamma_2^{-1}\gamma_1\gamma_2$ yields the desired result.
\end{proof}

Let $u_1,\ldots,u_d$ be a complete ordered list of basic commutators in the free $s$-step nilpotent Lie group $G$ on generators $x_1,\ldots,x_r$, and let $e_i=\log u_i$ be elements of the corresponding Lie algebra $\g$. Define recursively the \emph{adjusted weight vector} $\omega(\alpha)$ of a formal Lie bracket $\alpha$ in the $e_i$ by setting $\omega(e_i)=\chi(u_i)$ and setting $\omega([\alpha_1,\alpha_2])=\omega(\alpha_1)+\omega(\alpha_2)$ whenever $\alpha_1$ and $\alpha_2$ are formal Lie brackets whose adjusted weight vectors have already been defined.

As in Section \ref{sec:nilp-progs}, we define a partial order on the weight vectors of commutators in the $x_i$ by declaring that $\chi\ge\chi'$ if $\chi_i\ge\chi'_i$ for every coordinate $i$. We say that an increasing sequence $u_{i_1},\ldots,u_{i_p}$ of basic commutators is \emph{upwards closed} if for every $u_{i_j}$ in the sequence and every $u_k$ with $\chi(u_k)>\chi(u_{i_j})$ we have $u_k$ also in the sequence. For each $v\in\Z^r$ we write
\[
\g_v^\Q=\Span_\Q\{e_i:\chi(u_i)\ge v\}.
\]

\begin{lemma}\label{lem:up-closed}
Let $u_1,\ldots,u_d$ be a complete ordered list of basic commutators in the free $s$-step nilpotent Lie group $G$ on generators $x_1,\ldots,x_r$, and let $e_i=\log u_i$ be elements of the corresponding Lie algebra $\g$. Then for every Lie bracket $\alpha$ in the $e_i$ we have $\alpha\in\g_{\omega(\alpha)}^\Q$.
\end{lemma}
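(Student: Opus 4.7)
The plan is to exploit the multigrading of the free nilpotent Lie algebra $\g$ on $\log x_1,\ldots,\log x_r$ by $x_\ell$-weights, writing $\g=\bigoplus_v\g^{(v)}$ where $\g^{(v)}$ is the $\R$-span of those iterated Lie brackets in the $\log x_\ell$'s whose $x_\ell$-weight equals $v_\ell$ for each $\ell$, so that $[\g^{(v)},\g^{(v')}]\subset\g^{(v+v')}$. The strategy breaks into two steps: first show $\alpha\in\bigoplus_{v\geq\omega(\alpha)}\g^{(v)}$, and then identify this subspace with the $\Q$-span of $\{e_i:\chi(u_i)\geq\omega(\alpha)\}$, which is by definition $\g_{\omega(\alpha)}^\Q$.

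The first step is a direct induction on the formal weight $w(\alpha)$. For the base case $\alpha=e_i$, Lemma \ref{lem:comms.1} applied to the bracket form defining the basic commutator $u_i$ expresses $e_i=\log u_i$ as the corresponding iterated bracket in the $\log x_\ell$'s (lying in $\g^{(\chi(u_i))}$) plus correction terms of strictly larger formal weight, each of which features every occurrence of every variable appearing in $u_i$ at least once and therefore lies in $\g^{(v)}$ for some $v\geq\chi(u_i)$. The inductive step $\alpha=[\beta_1,\beta_2]$ then follows at once from the gradedness of the Lie bracket together with the additivity $\omega([\beta_1,\beta_2])=\omega(\beta_1)+\omega(\beta_2)$.

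For the second step, I would first observe that $\{e_i\}$ is an $\R$-basis of $\g$: Lemma \ref{lem:malcev.basis} provides a smooth bijection $(\ell_i)\mapsto\log(u_1^{\ell_1}\cdots u_d^{\ell_d})$ from $\R^d$ to $\g$ whose linearisation at the origin is the map $(\ell_i)\mapsto\sum\ell_ie_i$, forcing that map to be a linear isomorphism. Writing $e_i=e_i^{(0)}+(\text{strictly higher graded pieces})$ with $e_i^{(0)}\in\g^{(\chi(u_i))}$, the transition between $\{e_i\}$ and the leading components $\{e_i^{(0)}\}$ is unitriangular once one orders the index set by weakly decreasing $|\chi(u_i)|$. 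Consequently $\{e_i^{(0)}\}$ is also an $\R$-basis of $\g$ and, being graded, restricts to a basis of each $\g^{(v)}$ via $\{e_i^{(0)}:\chi(u_i)=v\}$; this is the classical Hall basis of the free Lie algebra, which in this setting can alternatively be extracted from Theorem \ref{thm:collected.unique} by induction on the step, using that the top-level quotient is free abelian on the basic commutators of maximal weight. Reversing the unitriangular change of basis on the subset $\{i:\chi(u_i)\geq\omega\}$ then shows that this set is a basis of $\bigoplus_{v\geq\omega}\g^{(v)}$, and rationality of the coefficients of $\alpha$ is automatic since Lemma \ref{lem:comms.1}, Proposition \ref{prop:non-basic.collected}, and the transition matrix all have rational entries.

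The main obstacle I anticipate is precisely this identification. A naive inductive approach via Baker--Campbell--Hausdorff and bilinearity is frustrated by a genuine conflict of directions, since BCH reduces a weight-two bracket $[e_k,e_l]$ to iterated brackets of strictly larger $|\omega|$, while bilinearity of a bracket of formal weight at least three reduces to subexpressions of strictly smaller $|\omega|$, and neither monotone induction on $|\omega|$ accommodates both moves at once. The grading-based argument sidesteps this by re-casting the conclusion as a statement about a direct-sum decomposition of $\g$, at the cost of invoking (an instance of) the Hall basis theorem for the free Lie algebra.
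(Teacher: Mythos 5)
Your proposal is correct but follows a genuinely different route from the paper's own proof. The paper runs a simultaneous downward induction on the basic-commutator index $m$, proving in tandem (i) the result restricted to brackets $\alpha$ with $|\omega(\alpha)|>|\chi(u_m)|$ and (ii) the auxiliary assertion that $\log(u_{i_1}^{\ell_{i_1}}\cdots u_{i_p}^{\ell_{i_p}})\in\Span_\Q(e_{i_1},\ldots,e_{i_p})$ for upwards-closed subsequences with $i_1\ge m$; the only inputs are Theorem \ref{thm:collected.unique}, Proposition \ref{prop:non-basic.collected} and Lemma \ref{lem:comms.1}, and the multigrading of $\g$ by weight vectors is never introduced. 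You instead factor through that multigrading: your step 1 is a clean upward induction on formal weight that places $\alpha\in\bigoplus_{v\ge\omega(\alpha)}\g^{(v)}$ (and your reading of the ``each variable at least once'' clause of Lemma \ref{lem:comms.1} in the base case is exactly what is needed), and step 2 reduces the claim to identifying $\{e_i:\chi(u_i)\ge\omega\}$ as a basis of that graded piece via a block-unitriangular change of basis. This is conceptually more transparent than the paper's double induction and correctly diagnoses why a naive single induction on $|\omega|$ fails; the cost is that step 2 imports the Hall basis theorem for the free Lie algebra, which the paper avoids by using only the group-theoretic version (Theorem \ref{thm:collected.unique}). Your sketch of how to derive the Lie-algebra statement from the group one by induction on step is plausible but would need real elaboration; and the appeal to Lemma \ref{lem:malcev.basis} to see that $\{e_i\}$ is a basis is circular, since that lemma already presupposes a strong Mal'cev basis (the basis property does follow from the Hall-basis argument you then sketch, or from the standard fact that the logarithm of a lattice spans $\g$). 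The one place you wave your hands too briskly is the final rationality claim: knowing $\alpha\in\Span_\R\{e_i:\chi(u_i)\ge\omega(\alpha)\}$, you still need that $\alpha$ has rational coordinates in the basis $\{e_i\}$, which is a genuine additional fact. Saying the relevant maps ``all have rational entries'' does not by itself supply the argument; one needs either a separate downward induction on formal weight via Lemma \ref{lem:comms.1} and Theorem \ref{thm:collected.unique} (which is in effect the paper's assertion (ii)), or the observation from Mal'cev theory that $\Q\log\langle u_1,\ldots,u_d\rangle$ is a $\Q$-Lie algebra with $\Q$-basis $\{e_i\}$, closed under Lie brackets.
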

\begin{proof}
The lemma is trivial when $r=1$, so we may assume that $r\ge2$. In that case we prove, for each $m$, that the following assertions hold.
\begin{enumerate}
\item If $\alpha$ is a Lie bracket in the $e_i$ with $|\omega(\alpha)|>|\chi(u_m)|$ then $\alpha\in\g_{\omega(\alpha)}^\Q$.
\item If $u_{i_1},\ldots,u_{i_p}$ is an upwards-closed subsequence of basic commutators with $i_1\ge m$ then for all rationals $\ell_{i_1},\ldots,\ell_{i_p}\in\Q$ we have $\log u_{i_1}^{\ell_{i_1}}\cdots u_{i_p}^{\ell_{i_p}}\in\Span_\Q(e_{i_1},\ldots,e_{i_p})$.
\end{enumerate}
This is sufficient, since if $|\omega(\alpha)|>1$ then the lemma follows from the $m=1$ case of (1), whereas if $|\omega(\alpha)|=1$ then $\alpha=e_i$ for some $i$ and the lemma is trivially satisfied.

Assertions (1) and (2) are trivially true if $m=d$, so we may fix $m$ and assume by induction that both assertions hold for all larger values of $m$.

We start with assertion (2), assuming that $i_1\ge m$ and that $\ell_{i_1},\ldots,\ell_{i_p}\in\Q$. The sequence $u_{i_2},\ldots,u_{i_p}$ is upwards closed, so the inductive hypothesis for assertion (2) implies that, writing $y=\log u_{i_2}^{\ell_{i_2}}\cdots u_{i_p}^{\ell_{i_p}}$, we have $y\in\Span_\Q(e_{i_2},\ldots,e_{i_p})$. The Baker--Campbell--Hausdorff formula \eqref{eq:bch} then implies that
\[
\textstyle\log u_{i_1}^{\ell_{i_1}}\cdots u_{i_p}^{\ell_{i_p}}=\ell_{i_1}e_{i_1}+y+\frac{\ell_{i_1}}{2}[e_{i_1},y]+\cdots,
\]
and hence that $\log u_{i_1}^{\ell_{i_1}}\cdots u_{i_p}^{\ell_{i_p}}$ is a rational linear combination of $e_{i_1},\ldots,e_{i_p}$ and some set of Lie brackets, each of which has $e_{i_1}$ and at least one $e_{i_j}$ with $j\ge2$ amongst its components. The inductive hypothesis for assertion (1) and the fact that $u_{i_1},\ldots,u_{i_p}$ is upwards closed implies that each of these Lie brackets is itself a rational linear combination of $e_{i_2},\ldots,e_{i_p}$, and so assertion (2) is proved.

We now move to assertion (1), assuming that $\alpha$ is a Lie bracket in the $e_i$ with $|\omega(\alpha)|>|\chi(u_m)|$. Writing $\alpha=\alpha(e_{j_1},\ldots,e_{j_k})$, it follows from Lemma \ref{lem:comms.1} that there exist rationals $q_1,\ldots,q_n$ and Lie brackets $\beta_1,\ldots,\beta_n$ in the $e_i$, each of which satisfies $\omega(\beta_j)>\omega(\alpha)$, such that
\[
\alpha=\log\alpha(u_{j_1},\ldots,u_{j_k})+\sum_{i=1}^n q_i\beta_i.
\]
Since there are basic commutators of every weight when $r\ge2$, the fact that $\omega(\beta_i)>\omega(\alpha)$ implies in particular that $|\omega(\beta_i)|>|\chi(u_{m+1})|$ and $\g_{\omega(\beta_i)}^\Q\subset\g_{\omega(\alpha)}^\Q$. The induction hypothesis for assertion (1) therefore implies that each $\beta_i$ satisfies $\beta_i\in\g_{\omega(\alpha)}^\Q$. We therefore have
\[
\alpha\in\log\alpha(u_{j_1},\ldots,u_{j_k})+\g_{\omega(\alpha)}^\Q,
\]
and so assertion (1) follows from Proposition \ref{prop:non-basic.collected} and assertion (2) applied to the upwards-closed set $\{u_i:\chi(u_i)\ge\omega(\alpha)\}$.
\end{proof}

\begin{proof}[Proof of Proposition \ref{prop:nilbox.upper-tri}]
It follows from Lemma \ref{lem:up-closed} that for every $i,j$ we have
\[
[e_i,e_j]\in B_\Q(e_{k_1},\ldots,e_{k_n};O_{r,s}(1)),
\]
where $u_{k_1},\ldots,u_{k_m}$ is the ordered list of those basic commutators whose weight vectors are coordinatewise at least $\chi(u_i)+\chi(u_j)$. We may therefore pick natural numbers $Q_d,\ldots,Q_1$ in turn so that
\[
[Q_ie_i,Q_je_j]\in B_\Z(Q_{k_1}e_{k_1},\ldots,Q_{k_n}e_{k_n};O_{r,s}(1)).
\]
However, $L_{k_\ell}\ge L_iL_j$ for every $\ell$ by definition, and so the proposition follows.
\end{proof}

We now move onto the proof of Proposition \ref{prop:dilates}. We start by recording the following observation as a lemma for ease of later reference.
\begin{lemma}\label{lem:upper-tri.dilate}
Let $k>0$. Then for elements $x_1,\ldots,x_d$ of a group or a Lie algebra, if the tuple $(x;L)$ is in $C$-upper-triangular form then the tuple $(x;kL)$ is in $Ck$-upper-triangular form.
\end{lemma}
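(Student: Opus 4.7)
The plan is to unwind the definition of $C$-upper-triangular form and observe that the factors of $k$ cancel exactly in the ratios that appear. Fix indices $i<j$. In the group case, $(x;L)$ being in $C$-upper-triangular form means that for every choice of signs,
\[
[x_i^{\pm 1},x_j^{\pm 1}]\in P_\ord\!\left(x_{j+1},\ldots,x_d;\tfrac{CL_{j+1}}{L_iL_j},\ldots,\tfrac{CL_d}{L_iL_j}\right),
\]
and the condition for $(x;kL)$ to be in $Ck$-upper-triangular form, after substituting $L\mapsto kL$ and $C\mapsto Ck$, is that $[x_i^{\pm 1},x_j^{\pm 1}]$ lie in the progression whose $\ell$-th side length is
\[
\frac{(Ck)(kL_\ell)}{(kL_i)(kL_j)}=\frac{CL_\ell}{L_iL_j}.
\]
This is exactly the original side length, so the two conditions coincide and the implication is immediate.

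The Lie-algebra case is handled identically: the defining inclusion $[e_i,e_j]\in B_\Z\!\left(e_{j+1},\ldots,e_d;\tfrac{CL_{j+1}}{L_iL_j},\ldots,\tfrac{CL_d}{L_iL_j}\right)$ is preserved by the same cancellation, since $B_\Z$ depends on its length parameters in the same way $P_\ord$ does.

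There is essentially no obstacle; the only minor point to be careful about is that the side lengths in \eqref{eq:C-upp-tri} are meant to be positive integers (or are implicitly floored), while $k$ is an arbitrary positive real. Provided one uses the same convention (e.g.\ $|\ell_i|\le\lfloor\,\cdot\,\rfloor$) on both sides, the equality of the two expressions $\tfrac{CL_\ell}{L_iL_j}$ and $\tfrac{(Ck)(kL_\ell)}{(kL_i)(kL_j)}$ remains an exact equality of real numbers, and the same floor is taken in both cases. Hence the lemma reduces to this single line of arithmetic.
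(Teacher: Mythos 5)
Your proof is correct and is precisely the one-line unwinding the authors had in mind; the paper in fact states Lemma~\ref{lem:upper-tri.dilate} without proof, treating the cancellation $\frac{(Ck)(kL_\ell)}{(kL_i)(kL_j)}=\frac{CL_\ell}{L_iL_j}$ as immediate. One small remark: your worry about integrality is not really an issue, since $P_\ord$ and $B_\Z$ are defined by the constraint $|\ell_i|\le L_i$ with $\ell_i\in\Z$, which makes perfect sense for arbitrary real side lengths, so no flooring convention is needed.
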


\begin{lemma}\label{lem:upper-tri}
Let $e_1,\ldots,e_d$ be elements of a Lie algebra and $L_1,\ldots,L_d$ non-negative integers such that $(e;L)$ is in $C$-upper-triangular form. Let $\beta$ be a bracket form of weight $r$. Then for every $i_1\le\ldots\le i_r$ we have
\[
\textstyle\beta(e_{i_1},\ldots,e_{i_r})\in B_\Z\left(e_{i_r+1},\ldots,e_d;\textstyle{\frac{O_{C,d,r}(L_{i_r+1})}{L_{i_1}\cdots L_{i_r}},\ldots,\frac{O_{C,d,r}(L_d)}{L_{i_1}\cdots L_{i_r}}}\right)
\]
\end{lemma}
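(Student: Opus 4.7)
The plan is induction on the weight $r$ of the bracket form $\beta$ (implicitly assuming $r\ge 2$, since for $r=1$ the form is just a projection and the stated conclusion makes no sense). For the base case $r=2$, every weight-$2$ bracket form is $[\alpha_a,\alpha_b]$ for some slot indices $a,b\in\{1,2\}$, so $\beta(e_{i_1},e_{i_2})=\pm[e_{i_1},e_{i_2}]$ (vanishing if $i_1=i_2$). By antisymmetry we may assume $i_1<i_2=i_r$, at which point the conclusion is precisely the defining property of $C$-upper-triangular form.

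For the inductive step with $r\ge 3$, I would write $\beta=[\gamma,\delta]$ where $\gamma,\delta$ are bracket forms of weights $r_\gamma,r_\delta<r$ with $r_\gamma+r_\delta=r$. The $r$ input slots of $\beta$ partition between $\gamma$ and $\delta$; let $I_\gamma,I_\delta\subset\{1,\dots,r\}$ be the sets of slot indices $j$ such that $e_{i_j}$ is fed into $\gamma$ and $\delta$ respectively, and set $J_\gamma=\max_{j\in I_\gamma}i_j$, $J_\delta=\max_{j\in I_\delta}i_j$, so $\max(J_\gamma,J_\delta)=i_r$. Applying the induction hypothesis to $\gamma$ and $\delta$ (after reordering their inputs into sorted order, which yields an equivalent bracket form of the same weight), one obtains integer expansions
\[
\gamma(e_{i_j}:j\in I_\gamma)=\sum_{p>J_\gamma}a_p e_p,\qquad \delta(e_{i_j}:j\in I_\delta)=\sum_{q>J_\delta}b_q e_q,
\]
with $|a_p|\le O_{C,d,r}(L_p)/\prod_{j\in I_\gamma}L_{i_j}$ and $|b_q|\le O_{C,d,r}(L_q)/\prod_{j\in I_\delta}L_{i_j}$.

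Expanding $\beta(e_{i_1},\dots,e_{i_r})=[\gamma,\delta]$ bilinearly, and applying the $C$-upper-triangular form (with antisymmetry for $p>q$) to write each nonzero $[e_p,e_q]=\sum_{m>\max(p,q)}c^{p,q}_m e_m$ with $|c^{p,q}_m|\le CL_m/(L_pL_q)$, one collects the coefficient of $e_m$ as an integer bounded by
\[
\sum_{p,q:\max(p,q)<m}\frac{O_{C,d,r}(L_p)}{\prod_{I_\gamma}L_{i_j}}\cdot\frac{O_{C,d,r}(L_q)}{\prod_{I_\delta}L_{i_j}}\cdot\frac{CL_m}{L_pL_q}=\frac{O_{C,d,r}(L_m)}{L_{i_1}\cdots L_{i_r}},
\]
where the factors of $L_p$ and $L_q$ telescope away and the at most $O_d(1)$ summands are absorbed into the constant. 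Only indices $m$ with $m>\max(p,q)>\max(J_\gamma,J_\delta)=i_r$ contribute, so $m\ge i_r+1$, yielding the required inclusion.

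The main obstacle is simply bookkeeping: one must be careful to relabel the arguments of $\gamma,\delta$ to sorted order when invoking the induction hypothesis, and to track that when the two coefficient bounds are multiplied the factors $L_p,L_q$ cancel exactly against the denominator coming from the upper-triangular form, leaving the clean denominator $L_{i_1}\cdots L_{i_r}$. Beyond this the argument is a direct bilinear expansion.
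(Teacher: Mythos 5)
Your argument is the ``routine induction on $r$'' that the paper leaves unspelled, and in spirit it matches exactly what is intended: decompose $\beta=[\gamma,\delta]$, expand each factor by the inductive hypothesis, expand bilinearly, invoke the $C$-upper-triangular bound on each $[e_p,e_q]$, and observe that the $L_p$ and $L_q$ cancel so that the denominator $L_{i_1}\cdots L_{i_r}$ comes out cleanly, while the index constraint $m>\max(p,q)>i_r$ propagates. The base case $r=2$ is handled correctly, and the final bookkeeping (integrality of the coefficients, at most $O_d(1)$ summands absorbed into the constant) is sound.

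There is, however, a genuine gap in the inductive step as you have written it. You observe at the outset that the statement only makes sense for $r\ge 2$, so your induction hypothesis is only available in weight $\ge 2$. But in the step you write $\beta=[\gamma,\delta]$ with $r_\gamma+r_\delta=r$ and apply the hypothesis to \emph{both} $\gamma$ and $\delta$, without excluding the case that one of them has weight $1$ --- which certainly occurs, e.g.\ for $\beta(x,y,z)=[x,[y,z]]$, where $\gamma$ has weight $1$. In that case the claimed expansion $\gamma(\cdots)=\sum_{p>J_\gamma}a_pe_p$ is false: the value is $e_{i_{j_0}}$ itself, a nonzero coefficient at $p=J_\gamma$, not at $p>J_\gamma$. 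The repair is cheap but needs to be stated. One clean option is to run the induction with the slightly weaker hypothesis, valid for all weights $r'\ge 1$, that $\gamma(e_{i'_1},\ldots,e_{i'_{r'}})$ lies in $B_\Z\bigl(e_{i'_{r'}},\ldots,e_d;\,O_{C,d,r'}(L_{i'_{r'}})/(L_{i'_1}\cdots L_{i'_{r'}}),\ldots\bigr)$, i.e.\ with the box starting at index $i'_{r'}$ rather than $i'_{r'}+1$; for $r'=1$ this is the tautology $e_{i'_1}\in B_\Z(e_{i'_1},\ldots,e_d;O(1),\ldots)$. In the inductive step one then has $p\ge J_\gamma$, $q\ge J_\delta$, and $[e_p,e_q]$ vanishes when $p=q$, so every surviving index $m$ satisfies $m>\max(p,q)\ge\max(J_\gamma,J_\delta)=i_r$, and the \emph{stronger} conclusion (box starting at $i_r+1$) still drops out, with the telescoping of denominators unchanged. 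With this adjustment your proof is complete.
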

\begin{proof}
This is a routine induction on $r$.
\end{proof}

\begin{lemma}\label{lem:QL.group}
Let $e_1,\ldots,e_d$ be a basis of the Lie algebra $\g$ of a connected, simply connected nilpotent Lie group $G$, and let $L_1,\ldots,L_d$ be positive integers such that $(e;L)$ is in $C$-upper-triangular form. Then
\begin{equation}\label{eq:sm.doub.rational}
(\exp B_\Z(e;L))^2\subset\exp B_\Q(e;O_{C,d}(L)).
\end{equation}
In particular, if $\exp\langle e_1,\ldots,e_d\rangle$ is a subgroup of $G$ then
\[
(\exp B_\Z(e;L))^2\subset\exp B_\Z(e;O_{C,d}(L)).
\]
\end{lemma}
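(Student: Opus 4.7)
The plan is to apply the Baker--Campbell--Hausdorff formula \eqref{eq:bch} to a product $\exp(X)\exp(Y)$ with $X,Y\in B_\Z(e;L)$, and then to expand the resulting Lie brackets in the basis $e_1,\ldots,e_d$ while carefully tracking coefficient sizes via the upper-triangular form. The first observation is that, since $(e;L)$ is in $C$-upper-triangular form, the Lie algebra generated by $e_1,\ldots,e_d$ is nilpotent of step at most $d$, so the BCH series terminates after boundedly many terms (the bound depending only on $d$), and its nonzero terms are iterated Lie brackets of $X$ and $Y$ whose rational coefficients are universal functions of $d$.

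Next I would expand each such iterated bracket multilinearly. Writing $X=\sum_i a_ie_i$ and $Y=\sum_i b_ie_i$ with $|a_i|,|b_i|\le L_i$, a bracketing form $\beta$ of weight $r$ evaluated at elements of $\{X,Y\}$ expands as a $\Z$-linear combination of terms $\beta(e_{j_1},\ldots,e_{j_r})$ whose scalar coefficients have magnitude at most $O_d(L_{j_1}\cdots L_{j_r})$. I would then invoke Lemma \ref{lem:upper-tri}, together with antisymmetry (and, if needed, the Jacobi identity) to reorder the indices, to bound $\beta(e_{j_1},\ldots,e_{j_r})$ coordinatewise by $O_{C,d,r}(L_k)/(L_{j_1}\cdots L_{j_r})$ in the $e_k$-direction. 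The factors $L_{j_1}\cdots L_{j_r}$ therefore cancel, so summing over all terms of all brackets in the BCH expansion (of which there are $O_d(1)$) yields that the resulting element $Z$ satisfies $Z\in B_\Q(e;O_{C,d}(L))$, giving \eqref{eq:sm.doub.rational}.

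For the final assertion, the upper-triangular form of $(e;L)$ implies in particular the strong Mal'cev condition \eqref{eq:malcev.basis}, so Lemma \ref{lem:malcev.basis} applies and the exponential map $\exp:\g\to G$ is a bijection. Under the additional assumption that $\exp\langle e_1,\ldots,e_d\rangle$ is a subgroup of $G$, the element $\exp(X)\exp(Y)$ lies in this subgroup, and so equals $\exp(Z')$ for some $Z'$ in the $\Z$-span of the $e_i$. Injectivity of $\exp$ forces $Z=Z'$, so the rational combination $Z$ produced by BCH must in fact have integer coordinates; combined with the bound just obtained, this gives $Z\in B_\Z(e;O_{C,d}(L))$, as required.

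The main technical obstacle is the bookkeeping in the middle paragraph: one must verify that the scalar factors $L_{j_1}\cdots L_{j_r}$ incurred by multilinearity are exactly cancelled by the denominators produced by iterated applications of the upper-triangular bound, and that reordering the indices $j_1,\ldots,j_r$ (necessary to put Lemma \ref{lem:upper-tri} into play) only costs universal constants depending on $C,d,r$. Once this is checked for a single iterated bracket, the rest is summation over the $O_d(1)$ terms of the truncated BCH series.
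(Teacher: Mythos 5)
Your proposal is correct and follows essentially the same route as the paper, whose proof of this lemma is the single sentence that it ``follows from the Baker--Campbell--Hausdorff formula \eqref{eq:bch} and Lemma \ref{lem:upper-tri}.'' You have simply filled in the bookkeeping the authors left implicit, including the cancellation of the products $L_{j_1}\cdots L_{j_r}$ against the denominators from Lemma \ref{lem:upper-tri}, the need to reorder indices via antisymmetry, and the use of injectivity of $\exp$ to upgrade from rational to integer coordinates in the final assertion.
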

\begin{proof}
This follows from the Baker--Campbell--Hausdorff formula \eqref{eq:bch} and Lemma \ref{lem:upper-tri}.
\end{proof}

\begin{corollary}
If $e_1,\ldots,e_d$ is a basis of the Lie algebra $\g$ of a connected, simply connected nilpotent Lie group $G$, and $L_1,\ldots,L_d$ are positive integers such that $(e;L)$ is in $C$-upper-triangular form, then $\exp B_\Z(e;L)$ has doubling at most $O_{C,d}(1)$.
\end{corollary}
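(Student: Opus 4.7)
The plan is to combine Lemma \ref{lem:QL.group} with the observation that the rationals appearing in the Baker--Campbell--Hausdorff formula, applied in a nilpotent Lie algebra of class at most $d$, have denominators bounded by a constant depending only on $d$. First I would note that the $C$-upper-triangular form gives $[e_i,e_j]\in\Span_\R(e_{j+1},\ldots,e_d)\subset\Span_\R(e_j,\ldots,e_d)$ whenever $i<j$, so $e_1,\ldots,e_d$ is a strong Mal'cev basis of $\g$. Lemma \ref{lem:malcev.basis} then implies that $\exp:\g\to G$ is a bijection; in particular $\exp$ restricts to a bijection on any finite subset of $\g$, and so $|\exp B_\Z(e;L)|=|B_\Z(e;L)|=\prod_{i=1}^d(2L_i+1)$.

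Next I would invoke Lemma \ref{lem:QL.group} to write $(\exp B_\Z(e;L))^2\subset\exp B_\Q(e;C'L)$ with $C'=O_{C,d}(1)$. Inspecting its proof, the rationals that arise come only from the Baker--Campbell--Hausdorff series and from the integer expansions of brackets of the $e_i$'s supplied by Lemma \ref{lem:upper-tri}. Because the $\Q$-span of $e_1,\ldots,e_d$ is nilpotent of class at most $d$, only finitely many BCH terms (of weight at most $d$) are nonzero, and each such term carries a fixed rational coefficient. Letting $Q=Q_d$ be a common denominator for these finitely many coefficients, I conclude that every element of $(\exp B_\Z(e;L))^2$ lies in $\exp\bigl(\tfrac{1}{Q}B_\Z(e;C'L)\bigr)$. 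Injectivity of $\exp$ then yields
\[
|(\exp B_\Z(e;L))^2|\le\left|\tfrac{1}{Q}B_\Z(e;C'L)\right|=\prod_{i=1}^d(2QC'L_i+1)\le O_{C,d}\!\left(\prod_{i=1}^d(2L_i+1)\right),
\]
which is $O_{C,d}(|\exp B_\Z(e;L)|)$, as desired.

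There is no substantive obstacle here; the only point requiring minor care is the claim that the BCH denominators are bounded in terms of $d$ alone. This is immediate from the combinatorial structure of the formula, since the truncation depth (the nilpotency class) and the number of multilinear commutator terms of each weight are both bounded in terms of $d$, and each coefficient is a universal rational independent of the particular Lie algebra.
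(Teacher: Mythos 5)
Your argument is correct and follows essentially the same route as the paper: apply Lemma \ref{lem:QL.group} to land in a rational box, then observe that the denominators appearing come from the finitely many BCH coefficients of weight at most $d$ (together with the integer structure constants supplied by the upper-triangular form) and are therefore bounded in terms of $d$ alone, so the cardinality of the rational box is $O_{C,d}(1)$ times that of $B_\Z(e;L)$. One trivial notational slip: the set you intend is the collection of points with coordinates in $\tfrac{1}{Q}\Z$ bounded by $C'L_i$, which is $\tfrac{1}{Q}B_\Z(e;QC'L)$ rather than $\tfrac{1}{Q}B_\Z(e;C'L)$; your cardinality count $\prod_i(2QC'L_i+1)$ is for the former, and the conclusion is unaffected.
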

\begin{proof}
The rationals appearing in the right-hand side of \eqref{eq:sm.doub.rational} arise from a single application of the Baker--Campbell--Hausdorff formula, and so have denominators bounded in terms of $d$.
\end{proof}

\begin{proof}[Proof of Proposition \ref{prop:dilates}]
The inclusion \eqref{eq:dilates.concl.1} follows from repeated application of Lemmas \ref{lem:upper-tri.dilate} and \ref{lem:QL.group}. To prove \eqref{eq:dilates.concl.2}, observe using the Baker--Campbell--Hausdorff formula \eqref{eq:bch} that for $\ell_i\in\Z$ we have
\[
\exp(-\ell_1e_1)\exp(\ell_1e_1+\cdots+\ell_de_d)\subset\exp\Span_\R(e_2,\ldots,e_d)
\]
Since $e_1,\ldots,e_d$ is a basis for $\g$ and $\exp:\g\to G$ is injective, this combines with Lemma \ref{lem:QL.group} to imply that
\[
\exp B_\Z(e;L)\subset P_\ord(u_1;L_1)\exp B_\Z(e_2,\ldots,e_d;O_{C,d}(L_2),\ldots,O_{C,d}(L_d)),
\]
from which \eqref{eq:dilates.concl.2} follows by induction and Lemma \ref{lem:upper-tri.dilate}.

To see that $P_\ord(u;L)$ is in $O_{C,d}(1)$-upper-triangular form, note first that Lemma \ref{lem:comms.1} followed by Lemma \ref{lem:upper-tri} imply that for $i<j$ we have
\[
[u_i^{\pm1},u_j^{\pm1}]\in\exp B_\Q\left(e_{j+1},\ldots,e_d;\textstyle{\frac{O_{C,d}(L_{j+1})}{L_iL_j},\ldots,\frac{O_{C,d}(L_d)}{L_iL_j}}\right),
\]
and hence
\[
[u_i^{\pm1},u_j^{\pm1}]\in\exp B_\Z\left(e_{j+1},\ldots,e_d;\textstyle{\frac{O_{C,d}(L_{j+1})}{L_iL_j},\ldots,\frac{O_{C,d}(L_d)}{L_iL_j}}\right)
\]
since $\exp\langle e_1,\ldots,e_d\rangle$ is a group. It therefore follows from Lemma \ref{lem:upper-tri.dilate} and \eqref{eq:dilates.concl.2} applied to
\[
B_\Z\left(e_{j+1},\ldots,e_d;\textstyle{\frac{O_{C,d}(L_{j+1})}{L_iL_j},\ldots,\frac{O_{C,d}(L_d)}{L_iL_j}}\right)
\]
that
\[
[u_i^{\pm1},u_j^{\pm1}]\in P_\ord\left(u_{j+1},\ldots,u_d;\textstyle{\frac{O_{C,d}(L_{j+1})}{L_iL_j},\ldots,\frac{O_{C,d}(L_d)}{L_iL_j}}\right).
\]

Finally, Lemma \ref{lem:upper-tri.dilate} implies that for every $m$ the tuple $(e,mL)$ is in $Cm$-upper-triangular form, and so \eqref{eq:dilates.concl.1} and \eqref{eq:dilates.concl.2} imply that there exists $p_{C,d}:(0,\infty)\to(0,\infty)$ such that
\[
P_\ord(u;mL)\subset\exp B_\Z(e;p_{C,d}(m)L)
\]
and
\[
\exp B_\Z(e;mL)\subset P_\ord(u;p_{C,d}(m)L).
\]
In light of Lemma \ref{lem:malcev.basis}, this proves the final assertion of the proposition.
\end{proof}

\section{Geometry of numbers}\label{sec:geom-of-nos}

As we described in the introduction, the geometry of numbers plays an important role in the proof of the abelian version of Theorem \ref{thm:nilp.bilu}. In this section we describe how to transfer this aspect of the argument to the nilpotent setting.

Given a set $A$ in a Lie algebra we write $[A,A]=\{[a,a']:a,a'\in A\}$. We say that a symmetric convex body in $\R^d$ is \emph{strictly thick} with respect to a lattice $\Lambda$ if there exists some $\lambda<1$ such that $\lambda B\cap\Lambda$ generates $\Lambda$. The main result of this section is the following, which may be thought of as a nilpotent version of part of the proof of \cite[Theorem 1.2]{bilu} (see in particular \cite[(3.2) \& (3.3)]{bilu}).

\begin{prop}\label{prop:nilp.box}
Let $\g$ be a nilpotent Lie algebra of dimension $d$, and let $\Lambda$ be a lattice in $\g$ satisfying $[\Lambda,\Lambda]\subset\Lambda$. Suppose that $B$ is a strictly thick symmetric convex body in $\g$ satisfying $[B,B]\subset B$. Then there exists a basis $e_1,\ldots,e_d$ for $\Lambda$ and integers $L_1,\ldots,L_d$ such that
\begin{equation}\label{eq:nilp.box.concl}
B\subset B_\R(e;L)\subset O_d(1)B,
\end{equation}
and such that $(e;L)$ is in $1$-upper-triangular form.
\end{prop}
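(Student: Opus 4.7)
The plan is to induct on the nilpotency step $s$ of $\g$. For the base case $s=1$, the algebra is abelian, the $1$-upper-triangular condition is vacuous, and the desired conclusion is essentially Bilu's classical geometry-of-numbers result: the strict thickness hypothesis is precisely what allows one to extract a $\Z$-basis $e_1,\ldots,e_d$ of $\Lambda$ together with integers $L_i$ chosen so that $L_ie_i\in B$ and $B\subset B_\R(e;L)\subset O_d(1)B$, rather than merely an $\R$-basis of $\g$.

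For the inductive step, let $\g_s$ denote the last non-trivial term of the lower central series of $\g$, which is central. The quotient $\bar\g=\g/\g_s$ is nilpotent of step $s-1$, the image $\bar\Lambda$ is a lattice with $[\bar\Lambda,\bar\Lambda]\subset\bar\Lambda$, and the image $\bar B$ remains a strictly thick symmetric convex body with $[\bar B,\bar B]\subset\bar B$. The inductive hypothesis furnishes a basis $\bar e_1,\ldots,\bar e_{d'}$ of $\bar\Lambda$ and integers $L_1,\ldots,L_{d'}$ in $1$-upper-triangular form with $\bar B\subset B_\R(\bar e;L)\subset O_d(1)\bar B$ and $L_i\bar e_i\in\bar B$. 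Applying the base case to the pair $(B\cap\g_s,\Lambda\cap\g_s)$ inside $\g_s$ — whose strict-thickness is inherited by iterating the bracket closure $[B,B]\subset B$ on generators of $\Lambda\cap\lambda B$ — produces basis vectors $e_{d'+1},\ldots,e_d$ and lengths $L_{d'+1},\ldots,L_d$ with $L_ke_k\in B\cap\g_s$. Since $L_i\bar e_i\in\bar B$ is the image of some $b_i\in B$, an initial $\Z$-lift of $\bar e_i$ can be adjusted by an element of $\Lambda\cap\g_s$ (using the approximation of $B\cap\g_s$ by $B_\R(e_{d'+1},\ldots,e_d;L_{d'+1},\ldots,L_d)$) to obtain a lift $e_i\in\Lambda$ with $L_ie_i\in B$; the combined tuple is then a $\Z$-basis of $\Lambda$.

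The $1$-upper-triangular form follows: for $i<j\leq d'$ the bracket $[e_i,e_j]$ lies in $\Lambda$, and its image in $\bar\g$ is by induction an integer combination of $\bar e_{j+1},\ldots,\bar e_{d'}$ with coefficients of modulus at most $L_k/(L_iL_j)$; the residual modulo $\g_s$ is an integer combination of $e_{d'+1},\ldots,e_d$ whose coefficients are bounded in the same way via the key identity $L_iL_j[e_i,e_j]=[L_ie_i,L_je_j]\in[B,B]\subset B\subset B_\R(e;L)$. For $j>d'$ the bracket vanishes since $e_j\in\g_s$ is central. The containment $B\subset B_\R(e;L)\subset O_d(1)B$ follows by combining the two partial approximations modulo $\g_s$ and within $\g_s$. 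The main obstacle is arranging that each $L_ie_i$ lies \emph{exactly} in $B$ rather than merely in $O_d(1)B$, since this is precisely what forces the constant $1$ (rather than $O_d(1)$) in the upper-triangular form via $[L_ie_i,L_je_j]\in[B,B]\subset B$. Simultaneously securing this property and the $\Z$-basis property of $(e_1,\ldots,e_d)$ hinges on the interplay between strict thickness and $[B,B]\subset B$: strict thickness produces enough lattice elements within $B$ to correct arbitrary lifts, and the bracket closure keeps the correction inside $B$. This is the nilpotent analogue of Bilu's geometry-of-numbers argument that the proposition advertises.
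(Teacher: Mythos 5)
Your strategy---inducting on the nilpotency step by passing to $\bar{\g}=\g/\g_s$ and treating the last lower-central-series term $\g_s$ by the abelian (Bilu) base case---is genuinely different from the paper's proof, which works in $\g$ all at once: it takes witnesses $a_1,\ldots,a_d$ to the successive minima $\lambda_1\le\cdots\le\lambda_d$ of $B$ with respect to $\Lambda$, constructs a Mahler basis $e_1,\ldots,e_d$ via Lemma \ref{lem:mahler}, gets \eqref{eq:nilp.box.concl} by Bilu's argument together with strict thickness (which forces $L_i\ge1$), and then derives $[e_i,e_j]\in\langle e_{j+1},\ldots,e_d\rangle$ directly from $[a_i,a_j]\in\lambda_i\lambda_j B$ combined with $\lambda_j<1$, which (by the defining property of the successive minima) forces $[a_i,a_j]\in\Span_\R(a_1,\ldots,a_{i-1})$. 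A final pass inflates $L_{j+1},\ldots,L_d$ by $O_d(1)$ factors to obtain $1$-upper-triangular form. Your quotient-by-centre scheme is conceptually appealing because the vanishing of $[e_i,e_j]$ for $j>d'$ is then automatic, but it runs into a concrete obstruction that the Mahler-basis approach sidesteps.

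The serious gap is the claim that $B\cap\g_s$ is strictly thick with respect to $\Lambda\cap\g_s$. Your justification---iterating the bracket closure $[B,B]\subset B$ on generators of $\Lambda$ lying in $\lambda B$---only produces, inside $\lambda^s B$, generators for the sublattice spanned by $s$-fold brackets of lattice vectors. That sublattice is typically a \emph{proper} finite-index sublattice of $\Lambda\cap\g_s$: in the Heisenberg algebra with $\Lambda=\Z X+\Z Y+\Z(Z/N)$, for instance, the bracket sublattice is $\Z Z$, of index $N$ in $\Lambda\cap\g_s=\Z(Z/N)$. You have not shown that $\mu B\cap\Lambda\cap\g_s$ generates all of $\Lambda\cap\g_s$ for some $\mu<1$, and this is not clear unless the witnessing $\lambda$ for $B$ is bounded away from $1$ (the definition only supplies \emph{some} $\lambda<1$). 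Without it your base case cannot be invoked in $\g_s$, since the lengths cannot be rounded up to positive integers without destroying the containment $B\cap\g_s\subset B_\R(e_{d'+1},\ldots,e_d;L')$. A secondary issue: you strengthen the inductive hypothesis mid-argument to require $L_i\bar e_i\in\bar B$, which neither the proposition nor Bilu's construction delivers (one only gets $L_ie_i\in B_\R(e;L)\subset O_d(1)B$), and the adjustment of the lifts $e_i$ by elements of $\Lambda\cap\g_s$ to force $L_ie_i\in B$ is not actually carried out. That strengthening is in fact unnecessary---as the paper shows, once $[e_i,e_j]\in\langle e_{j+1},\ldots,e_d\rangle$ is known, a bounded dilation of $L_{j+1},\ldots,L_d$ yields $1$-upper-triangular form at the cost of worsening the $O_d(1)$ in \eqref{eq:nilp.box.concl}---but removing it does not repair the strict-thickness gap, which is where the argument breaks.
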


Throughout this section and the rest of the paper, if $B$ is a symmetric convex body in $\R^d$ we denote by $\|\cdot\|_B$ the norm on $\R^d$ whose unit ball is the closure $\overline B$ of $B$.

\begin{lemma}[{\cite[Lemma 6.6]{bilu}}]\label{lem:rom.5}
Let $\|\cdot\|$ be a Euclidean norm on $\R^d$, and given $v\in\R^d$ write $\pi_v$ for the orthogonal projection of $\R^d$ onto the orthogonal complement of $\Span_\R(v)$. Suppose $B$ is a symmetric convex body and $v\in\R^d$ with $v\ne0$. Then
$$\vol\left(\pi_v(B)\right)\leq \frac{d}{2}\frac{\|v\|_B}{\|v\|}\vol(B).$$
\end{lemma}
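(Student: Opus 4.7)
The plan is to mimic the standard convex-geometric proof of this sort of projection-volume inequality. By rescaling $v$ it suffices to treat the case $\|v\|_B = 1$, so that $\pm v$ both lie in the closure $\overline{B}$; the general bound follows by homogeneity. Using the orthogonal decomposition $\R^d = v^\perp \oplus \R v$ and Fubini, I would write
\[
\vol(B) = \int_{\pi_v(B)} L(y)\, dy,
\]
where $L(y)$ denotes the Euclidean length of the chord $\{x \in B : \pi_v(x) = y\}$, and then focus on obtaining a lower bound on $L$ that is compatible with the shape of $\pi_v(B)$.

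The geometric heart of the argument is the following triangle construction. For each $y \in \pi_v(B)$, pick $x \in B$ with $\pi_v(x) = y$; by convexity and symmetry, the closed triangle with vertices $v,-v,x$ lies in $\overline{B}$. A one-line barycentric calculation shows that the intersection of this triangle with the affine line $\pi_v^{-1}(\lambda y)$ is a segment of Euclidean length exactly $2(1-\lambda)\|v\|$ for each $\lambda \in [0,1]$. Introducing the radial function $R(\hat y) = \max\{r \geq 0 : r\hat y \in \pi_v(B)\}$ for unit vectors $\hat y$ in $v^\perp$, applying the construction to $y = R(\hat y)\hat y$ yields the bound
\[
L(r\hat y) \geq 2\bigl(1 - r/R(\hat y)\bigr)\|v\| \qquad \bigl(0 \leq r \leq R(\hat y)\bigr).
\]

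To finish, I would integrate this lower bound over $\pi_v(B)$ in polar coordinates on $v^\perp$, using $dy = r^{d-2}\,dr\,d\sigma(\hat y)$ and the elementary identity
\[
\int_0^{R}\!\bigl(1 - r/R\bigr) r^{d-2}\,dr \;=\; \frac{R^{d-1}}{d(d-1)}.
\]
This gives $\vol(B) \geq \frac{2\|v\|}{d}\vol(\pi_v(B))$ in the normalised case; unscaling restores the factor $\|v\|_B$ and yields the stated inequality.

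I do not expect any serious obstacle: the result is classical convex geometry, and the only content is the triangle-chord estimate coupled with a one-variable integration. The one subtlety worth being careful about is to inscribe the triangle using the \emph{longest} chord of $\overline{B}$ in the direction of $v$, i.e.\ the segment from $-v$ to $v$ in the renormalised picture, rather than a shorter chord; this is what allows the Euclidean length $\|v\|$ (after unscaling, $\|v\|/\|v\|_B$) to appear in the denominator of the final bound.
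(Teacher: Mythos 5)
Your proof is correct, and the calculations check out: the triangle with vertices $v,-v,x$ does indeed have fibre length $2(1-\lambda)\|v\|$ over $\lambda y$, the one-variable integral $\int_0^R(1-r/R)r^{d-2}\,dr = R^{d-1}/(d(d-1))$ is right, and combining this with polar coordinates on $v^\perp$ (where $\vol(\pi_v(B)) = \frac{1}{d-1}\int_{S^{d-2}} R(\hat y)^{d-1}\,d\sigma$) gives exactly the stated bound. The one point worth making explicit is that you need $\overline B$ rather than $B$ when choosing $x$ over the boundary point $R(\hat y)\hat y$, but you are already working with the closure and this costs nothing since $\vol(B)=\vol(\overline B)$.

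The paper does not reprove this lemma; it simply cites Bilu's Lemma 6.6, so there is no internal proof to compare against. Your argument is the natural convex-geometric one (effectively showing that the chords of $\overline B$ dominate those of an inscribed bicone-like body with apexes at $\pm v$, and observing that equality holds for a genuine bipyramid, so the constant $d/2$ is sharp), and I would expect it to be essentially the argument in Bilu's paper.
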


Recall that the \emph{successive minima} $\lambda_1\le\ldots\le\lambda_d$ of a convex body $B\subset\R^d$ with respect to a lattice $\Lambda<\R^d$ are defined by $\lambda_i=\inf\{\lambda\in\R:\dim\Span_\R(\lambda B\cap\Lambda)\ge i\}$. We call a sequence $a_1,\ldots,a_d$ of elements of $\Lambda$ \emph{witnesses} to the successive minima if they are linearly independent over $\R$ and if $a_1,\ldots,a_i\subset\lambda_i\overline B$ for each $i$.

In a similar argument to \cite{bilu}, we make use of the following lemma, which is essentially \cite[Ch.~VIII, corollary of Theorem VII]{cassels}.
\begin{lemma}\label{lem:mahler}
Let $\Lambda\subset\R^d$ be a lattice, and let $B$ be a symmetric convex body in $\R^d$ with successive minima $\lambda_1,\ldots,\lambda_d$ with respect to $\Lambda$ witnessed by $a_1,\ldots,a_d$. Then there exists a basis $e_1,\ldots,e_d$ for $\Lambda$ such that
\begin{enumerate}
\item $\|e_1\|_B=\lambda_1$;
\item $\|e_i\|_B\le\frac{i}{2}\lambda_i$\qquad$(2\le i\le d)$;
\item $\Span_\R(e_1,\ldots,e_i)=\Span_\R(a_1,\ldots,a_i)$\qquad$(1\le i\le d)$.
\end{enumerate}
\end{lemma}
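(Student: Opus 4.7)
The plan is to prove this by induction on $i$, following the classical inductive construction underpinning Cassels' corollary. For $i=1$ I would take $e_1 := a_1$, which gives (1) and the $i=1$ case of (3) immediately. For the inductive step, supposing $e_1, \ldots, e_{i-1}$ form a $\Z$-basis of $\Lambda_{i-1} := \Lambda \cap \Span_\R(a_1, \ldots, a_{i-1})$ with the required norm bounds, I would set $\Lambda_i := \Lambda \cap \Span_\R(a_1, \ldots, a_i)$. This sublattice has rank $i$, and since $\Lambda_i / \Lambda_{i-1}$ is infinite cyclic, every lift $v \in \Lambda_i$ of a generator completes $\{e_1, \ldots, e_{i-1}\}$ to a $\Z$-basis of $\Lambda_i$; this automatically secures (3), and $v$ is determined only modulo $\Lambda_{i-1}$, giving significant latitude to minimise $\|e_i\|_B$.

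To force the norm bound in (2) I would exploit the freedom to replace $v$ by $v + \sum_{j<i} c_j e_j$ for arbitrary integers $c_j$. Writing $a_i = nv + \sum_{j<i} n_j e_j$ with $n, n_j \in \Z$ and $n \ne 0$, there are two cases. If $|n| = 1$ then $a_i$ itself lifts a generator and the choice $e_i := a_i$ gives $\|e_i\|_B \le \lambda_i$. If $|n| \ge 2$ then substituting $v = (a_i - \sum_j n_j e_j)/n$ and choosing each $c_j \in \Z$ so that $|c_j - n_j/n| \le 1/2$ yields a candidate $e_i$ satisfying
\[ \|e_i\|_B \;\le\; \frac{\lambda_i}{|n|} + \frac{1}{2}\sum_{j<i}\|e_j\|_B \;\le\; \frac{\lambda_i}{2} + \frac{1}{2}\sum_{j<i}\|e_j\|_B. \]

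The main obstacle will be extracting the precise linear bound $\|e_i\|_B \le (i/2)\lambda_i$ claimed in (2) from this recursion. The naive induction that plugs the inductive hypothesis $\|e_j\|_B \le (j/2)\lambda_j$ directly into the right-hand side produces a bound growing roughly like $(3/2)^i\lambda_i$, not $(i/2)\lambda_i$; closing this gap requires a more careful case analysis exploiting the distinction between $|n|=1$ (where $e_i$ can be chosen in the much smaller set $\lambda_i \overline B$) and $|n| \ge 2$, as well as a judicious initial choice of the witnesses $a_i$. Since this sharper analysis is the content of Cassels' treatment, and the lemma is essentially a restatement of \cite[Ch.~VIII, corollary of Theorem VII]{cassels}, I would simply invoke that result directly rather than reproduce the proof in full.
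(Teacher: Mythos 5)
Your proposal takes essentially the same route as the paper, which also dispatches conditions (1) and (2) by citing Cassels directly (the paper invokes \cite[Ch.~V, Lemma 8]{cassels} in its proof rather than the Ch.~VIII corollary you cite, but these are the same result) and, like you, notes that condition (3) falls out of the inductive construction rather than being stated explicitly by Cassels. Your diagnosis of why the naive recursion blows up is exactly right, and for what it is worth the resolution in Cassels is to reduce the coefficients of $e_i$ in its $\Q$-expansion over the \emph{witnesses} $a_1,\ldots,a_i$ (each correction term then costs at most $\tfrac{1}{2}\lambda_j$) rather than over the previously constructed $e_1,\ldots,e_{i-1}$, which turns the exponential recursion into the linear bound $\tfrac{1}{2}(\lambda_1+\cdots+\lambda_i)\le\tfrac{i}{2}\lambda_i$.
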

\begin{proof}
The lemma follows from applying \cite[Ch.~V, Lemma 8]{cassels} with $F=\|\cdot\|_B$. The third condition is not stated explicitly there, but follows automatically from the construction of the elements $e_i$, as can be seen from equations (3) and (4) in the proof.
\end{proof}
\begin{remark*}
A basis for $\Lambda$ satisfying conditions (1) and (2) of Lemma \ref{lem:mahler} is sometimes called a \emph{Mahler basis} of $B$.
\end{remark*}

\begin{proof}[Proof of Proposition \ref{prop:nilp.box}]
Let $a_1,\ldots,a_d$ be witnesses to the successive minima $\lambda_1,\ldots,\lambda_d$ of $B$, and let $e_1,\ldots,e_d$ be the basis of $\Lambda$ given by Lemma \ref{lem:mahler}. Bilu shows in the proof of \cite[Theorem 1.2]{bilu} that conditions (1) and (2) of Lemma \ref{lem:mahler} are sufficient to imply that there are positive reals $L_1,\ldots,L_d$ such that $B\subset B_\R(e;L)\subset O_d(1)B$. The fact that $B$ is strictly thick implies that $L_i\ge1$ for each $i$, so at the expense of some loss in the implied constant $O_d(1)$ we may in fact assume that $L_i\in\N$ for each $i$. The basis $e_1,\ldots,e_d$ and integers $L_i$ therefore satisfy \eqref{eq:nilp.box.concl}.

We may assume without loss of generality that $B$ is closed, and hence that $a_i\in\lambda_i B$ for each $i$. The fact that $[B,B]\subset B$ therefore implies that whenever $1\le i<j\le d$ we have $\lambda_i^{-1}\lambda_j^{-1}[a_i,a_j]\subset B$, and hence $[a_i,a_j]\subset\lambda_i\lambda_jB$. Since $B$ is strictly thick we have $\lambda_j<1$, and so by definition of the successive minima we conclude that $[a_i,a_j]\subset\Span_\R(a_1,\dots,a_{i-1})$. Condition (3) of Lemma \ref{lem:mahler} therefore implies that whenever $1\le i<j\le d$ we have $[e_i,e_j]\subset\Span_\R(e_1,\dots,e_{i-1})$. Reversing the order of the $e_i$, and since $[\Lambda,\Lambda]\subset\Lambda$ implies in particular that $[e_i,e_j]\subset\Lambda$, we conclude that whenever $1\le i<j\le d$ we have
\begin{equation}\label{eq:upper.tri}
[e_i,e_j]\in\langle e_{j+1},\ldots,e_d\rangle.
\end{equation}

To obtain the stronger condition that $(e;L)$ is in $1$-upper-triangular form, we increase some of the integers $L_i$ by factors bounded in terms of $d$ only; this clearly does not affect the truth of condition \eqref{eq:nilp.box.concl} apart from some worsening of the implied constants. Indeed, we show by induction on $k$ that it is possible to increase the $L_i$ in this way to ensure that whenever $1\le i<j\le k$ we have
\begin{equation}\label{eq:upper-tri}
[e_i,e_j]\in B_\Z\left(e_{j+1},\ldots,e_d;\textstyle{\frac{L_{j+1}}{L_iL_j},\ldots,\frac{L_d}{L_iL_j}}\right).
\end{equation}
To that end, let $1\le k\le d$ and suppose that \eqref{eq:upper-tri} holds whenever $1\le i<j<k$. 
Condition \eqref{eq:nilp.box.concl} and the assumption that $[B,B]\subset B$ imply that for each $i=1,\ldots,k-1$ we have
\begin{align*}
[L_ie_i,L_ke_k] &\subset O_d(1)[B,B]\\
         &\subset O_d(1)B\\
         &\subset B_\R(e;O_d(L)),
\end{align*}
and so \eqref{eq:upper.tri} and the fact that the $e_i$ form a basis for $\Lambda$ imply that
\[
[L_ie_i,L_ke_k]\subset B_\Z(e_{k+1},\ldots,e_d;O_d(L_{k+1}),\ldots,O_d(L_d)).
\]
Upon mutiplying each of $L_{k+1},\ldots,L_d$ by a constant depending only on $d$, we may therefore ensure that condition \eqref{eq:upper-tri} is satisfied whenever $1\le i<j\le k$.
\end{proof}

\section{Generation of progressions}\label{sec:gen.of.progs}
The main purpose of this section is to prove the following result. The proof we give is considerably simpler than in the original version of the paper; we are grateful to the anonymous referee for suggesting it.
\begin{prop}\label{prop:rom.8}
For every $d\in\N$ there exists a constant $M=M_d$ such that if $e_1,\ldots,e_d$ is the standard basis for $\Z^d$, if $L_1,\ldots,L_d$ are integers at least $M$, and if $A\subset B_\Z(e;L)$ is a symmetric generating set for $\Z^d$ satisfying $|A|\geq cL_1\ldots L_d$ for some $c>0$, then $B_\Z(e;L)\subset O_{c,d}(1)A$.
\end{prop}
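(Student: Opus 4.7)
I aim to prove the following sub-box containment, from which the proposition follows by a single dilation: there exist $k_0 = k_0(d) \in \N$ and $c' = c'(c,d) > 0$ such that
\[
k_0 A \;\supset\; B_\Z(e; c'L),
\]
where $c'L := (c'L_1,\ldots,c'L_d)$. Granting this, the $\lceil 1/c' \rceil$-fold sumset of $k_0 A$ contains $B_\Z(e; L)$ (up to harmless integer-part losses absorbed into the constant), so $B_\Z(e;L) \subset O_{c,d}(1)\cdot A$. That the sub-box be coordinate-aligned is essential: a generic Bohr-set conclusion of the kind produced by the classical Bogolyubov lemma would not dilate cleanly into $B_\Z(e;L)$.

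\textbf{Plan.} First record the small-doubling estimate $|A+A| \le |B+B| = \prod_i(4L_i+1) = O_d(|B|) = O_{c,d}(|A|)$, so $A$ has doubling at most $O_{c,d}(1)$. I would then prove the sub-box containment by induction on $d$. For the base case $d = 1$: $A \subset [-L, L]$ is symmetric with $|A| \ge cL$ and $\gcd(A) = 1$; sorting $A \cap [1, L]$ and averaging consecutive gaps produces a positive $\delta \in A - A$ with $\delta = O(1/c)$, and combining this with $\gcd(A) = 1$, the Freiman $3k-4$-style structure available at doubling $O(1/c)$, and the density of $A$ in $[-L,L]$, yields $k_0 A \supset [-L, L]$ for some $k_0 = O_c(1)$, provided $L \ge M_1$ is large enough for the pigeonhole step. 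For the inductive step from $d - 1$ to $d$, project $A$ onto a coordinate $\pi$: by Fubini, $\pi(A) \subset [-L_1, L_1]$ is symmetric of size $\Omega_{c,d}(L_1)$, and since $A$ generates $\Z^d$ the set $\pi(A)$ generates $\Z$, so the base case supplies $k_1\pi(A) \supset [-L_1, L_1]$ for $k_1 = O_{c,d}(1)$, handling the first coordinate with $O_{c,d}(1)$ summands. The inductive hypothesis, applied to a symmetric, dense, generating subset of $\ker\pi \cong \Z^{d-1}$ extracted from a bounded sumset of $A$, then handles the remaining $d-1$ coordinates, and the two representations are recombined.

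\textbf{Main obstacle.} The delicate point is producing, inside a bounded sumset of $A$, a symmetric dense subset of $\ker\pi$ that generates $\ker\pi$ as a group --- rather than merely some proper sublattice. Pure within-fiber differences $\bigcup_n (A^{(n)} - A^{(n)}) \subset A - A$ may lie in a proper sublattice of $\ker\pi$ even when $A$ itself generates $\Z^d$, so cross-fiber information is essential. A naive correction --- using $a_0 \in O_c(1)\,A$ with $\pi(a_0) = 1$ (supplied by the base case applied to $\pi(A)$) and then replacing each $a \in A$ by $a - \pi(a)\,a_0 \in \ker\pi$ --- introduces up to $O(L_1)$ copies of $a_0$ and so is too expensive for an $O_{c,d}(1)$ summand count. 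A subtler multi-scale procedure --- combining Ruzsa's covering lemma restricted to $\ker\pi$ with the full density of $A$ at every scale, or invoking the Freiman--Ruzsa structure theorem on the kernel slice $(A - A) \cap \ker\pi$ --- should allow one to build short cross-fiber generators of $\ker\pi$ within a bounded sumset of $A$ while keeping all summand counts bounded in terms of $c$ and $d$ only. The hypothesis $L_i \ge M_d$ is used throughout to guarantee that all pigeonhole and density estimates produce non-trivial structure.
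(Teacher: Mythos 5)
You have correctly identified the crux: after projecting along a coordinate direction $\pi$, nothing in the hypotheses ensures that a bounded sumset of $A$ intersected with $\ker\pi$ generates $\ker\pi$ rather than a proper finite-index sublattice, and the obvious correction (subtracting $\pi(a)$ copies of a fixed $a_0$ with $\pi(a_0)=1$) costs $\Theta(L_1)$ summands. What you offer in place of a proof --- a ``multi-scale procedure,'' Ruzsa covering on the kernel slice, or Freiman--Ruzsa applied to $(A-A)\cap\ker\pi$ --- is not worked out, and it is far from clear that any of these produces a generating set for $\ker\pi$ inside $O_{c,d}(1)A$ with all constants depending only on $c$ and $d$. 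The density hypothesis does bound the index (in $\ker\pi$) of the sublattice generated by $(A-A)\cap\ker\pi$ by $O_{c,d}(1)$, but bounding the index is not the same as showing it is $1$, and repairing it cheaply is exactly the difficulty. As written this is a genuine gap, not a routine detail, and it sits precisely where the $d\ge 2$ case becomes nontrivial.

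The paper sidesteps this by \emph{not} inducting on dimension. Rescaling so that the real box becomes $C_d$, it uses the density of $A$ together with a geometry-of-numbers argument (Lemmas \ref{lem:cube.basis}, \ref{lem:det}, \ref{lem:dense.covers}) to extract elements $a_1,\ldots,a_d\in O_{c,d}(1)A$ which, after rescaling, lie near the standard basis and so span $\R^d$ with determinant bounded below; for a bounded integer $m$ the full-rank sublattice $\Lambda'=\langle ma_1,\ldots,ma_d\rangle$ has the property that the quotient map $\Z^d\to\Z^d/\Lambda'$ is \emph{injective on $B_\Z(e;L)$}. Here $\Z^d/\Lambda'$ is a finite group of order $\asymp_{c,d}L_1\cdots L_d$ in which the image of $A$ automatically generates (since $A$ generates $\Z^d$) and has density $\gg_{c,d}1$, so iterating Lemma \ref{lem:Frei.comp} (Lemma \ref{lem:proportionFiniteGroup}) shows that $O_{c,d}(1)A$ surjects onto $\Z^d/\Lambda'$. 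Injectivity on the box, together with the fact that the lattice generators $a_i$ themselves lie in $O_{c,d}(1)A$, then folds this back to $B_\Z(e;L)\subset O_{c,d}(1)A$. Working in the bounded-index quotient rather than in the kernel is precisely what makes generation come for free; if you want to salvage the dimension induction you will in effect need a version of this quotient argument to handle the kernel, at which point you may as well run it globally.
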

Since $\Z^d$ is most naturally written as an additive group, here we write $nA=\{a_1+\ldots+a_n:a_i\in A\}$. In particular, $x\in O(1)A$ if there exists $t\ll1$ and $a_1,\ldots,a_t\in A$ such that  $x=a_1+\ldots+a_t$.

We also deduce the following more general corollary, which we do not use in this paper but which may be of independent interest.
\begin{corollary}\label{cor:rom.8}
For every $r,s\in\N$ there exists a constant $M=M_{r,s}$ such that if $P$ is a free nilpotent progression of rank $r$ and step $s$ with side lengths at least $M$, and if $A\subset P$ is a symmetric generating set for $\langle P\rangle$ such that $|A|\ge c|P|$ for some $c>0$, then $P\subset A^{O_{c,r,s}(1)}$.
\end{corollary}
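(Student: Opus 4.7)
The approach is induction on the step $s$. The base case $s=1$ is precisely Proposition~\ref{prop:rom.8} applied to $\Z^r$. For the inductive step, set $N=\langle P\rangle$, $Z=\gamma_s(N)$, and $\pi:N\to N/Z$; then $N/Z$ is the free $(s-1)$-step nilpotent group of rank $r$ and $\pi(P)$ is a free nilpotent progression of step $s-1$ on the same generators with the same side lengths. Using Theorem~\ref{thm:collected.unique}, every fibre of $\pi\vert_P$ has cardinality exactly $\prod_{|\chi(u_i)|=s}(2L_i+1)$, so the density bound $|A|\ge c|P|$ transfers to $|\pi(A)|\ge c|\pi(P)|$. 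The inductive hypothesis then yields $\pi(P)\subset\pi(A)^{K_1}$ with $K_1=O_{c,r,s}(1)$.

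Given $p\in P$, pick $a_1,\dots,a_{K_1}\in A$ with $\pi(a_1\cdots a_{K_1})=\pi(p)$ and set $z=(a_1\cdots a_{K_1})^{-1}p\in Z$. Combining \eqref{eq:prog.inverse}, Proposition~\ref{prop:nilp.prog.upper.tri} and Lemma~\ref{lem:lengths.powers} places $z$ in $P_\ord(u;O_{c,r,s}(L^\zeta))$, and then Theorem~\ref{thm:collected.unique} applied to $\pi(z)=1$ forces $z=\prod_{|\chi(u_k)|=s}u_k^{\ell_k}$ with $|\ell_k|\le O_{c,r,s}(L_k)$. It remains to show that every such $z$ lies in $A^{O_{c,r,s}(1)}$. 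To this end I would invoke Proposition~\ref{prop:rom.8} a second time, now on the abelianisation $\rho:N\to\Z^r$ (the same fibre argument yields $|\rho(A)|\ge c|\rho(P)|$), to produce, for each $i\in\{1,\dots,r\}$ and each $|m|\le L_i$, an element $\alpha^{(i,m)}\in A^{K_0}$ with $\alpha^{(i,m)}\equiv x_i^m\pmod{[N,N]}$, where $K_0=O_{c,r}(1)$.

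The crux is the commutator identity: if $u_k=\alpha(x_{j_1},\dots,x_{j_s})$ for a bracket form $\alpha$ of weight $s$, then in the $s$-step group
\[
\alpha(\alpha^{(j_1,\ell_1)},\dots,\alpha^{(j_s,\ell_s)})=u_k^{\ell_1\cdots\ell_s}
\]
\emph{exactly}. This holds because writing $\alpha^{(j_i,\ell_i)}=x_{j_i}^{\ell_i}w_i$ with $w_i\in[N,N]$ and expanding the bracket multilinearly, every term other than the intended one contains a factor from $[N,N]$ within a bracket of total weight $s$, and therefore lies in $\gamma_{s+1}(N)=1$. The left-hand side is an iterated commutator of $s$ elements of $A^{K_0}$, so it sits in $A^{O_s(K_0)}$. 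Since $L_k=L_{j_1}\cdots L_{j_s}$, iterated Euclidean division decomposes any $|m|\le O_{c,r,s}(L_k)$ as a sum of $O_s(1)$ products $\ell_1\cdots\ell_s$ with $|\ell_i|\le L_{j_i}$, placing $u_k^m$ in $A^{O_{c,r,s}(1)}$; multiplying over the $O_{r,s}(1)$ weight-$s$ basic commutators covers $z$, and the induction closes. The main obstacle I anticipate is verifying the exact commutator identity above with care; once that is established, the rest combines the density tracking, Lemma~\ref{lem:lengths.powers}, and the exponent decomposition in a routine way.
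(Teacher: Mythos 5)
Your proof is correct, and in the key step it takes a genuinely different route from the paper's. You agree with the paper in the inductive set-up: reduce modulo $\gamma_s$, transfer the density bound using Theorem~\ref{thm:collected.unique}, apply the inductive hypothesis, and reduce the problem to placing a controlled central element $z=\prod_{|\chi(u_k)|=s}u_k^{\ell_k}$ in a bounded power of $A$. Where you diverge is in \emph{how} you cover the central part. The paper runs a pigeonhole on the fibres of $\pi$ to find some coset in which $A$ is $c$-dense, deduces $|A^2\cap B^m|\gg_{r,s} c|B^m|$ for the progression $B$ of weight-$s$ basic commutators, notes (via the same weight-$s$ multilinearity you use) that the basic commutators themselves lie in $A^{O(1)}\cap B$, and then applies Proposition~\ref{prop:rom.8} a second time \emph{inside $\gamma_s$}. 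You instead apply Proposition~\ref{prop:rom.8} a second time to the \emph{abelianisation} $\rho:N\to\Z^r$, obtaining for every generator $x_i$ and every $|m|\le L_i$ an element $\alpha^{(i,m)}\in A^{K_0}$ congruent to $x_i^m$ modulo $[N,N]$, and then exploit multilinearity of weight-$s$ bracket forms in a step-$s$ group to build $u_k^{\ell_1\cdots\ell_s}$ directly, finishing with an iterated Euclidean-division decomposition of the exponent. This entirely avoids the pigeonhole/density step and is somewhat more constructive; the paper's version is shorter but leans on the second density application. Both are valid, and each requires the weight-$s$ commutator identity at some point.

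Two small points worth tidying. First, ``$P_\ord(u;O_{c,r,s}(L^\zeta))$'' should read $P_\ord(u;O_{c,r,s}(1)L)$: Lemma~\ref{lem:lengths.powers} gives lengths $O_{C,d}(n^{\zeta(i)})L_i$ with $n=O_{c,r,s}(1)$, so the $n^{\zeta(i)}$ factors are absorbed into the constant, not into a power of $L_i$. Second, the number of products produced by iterated Euclidean division should be stated as $O_s(C)$, where $C=O_{c,r,s}(1)$ is the constant bounding $|m|/L_k$, rather than $O_s(1)$; this is still $O_{c,r,s}(1)$ so the final bound is unaffected, but the dependence on the implied constant should be tracked.
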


\begin{remark}
Corollary \ref{cor:rom.8} does not hold for an arbitrary nilpotent progression, even in the abelian case. For example, if $P=\{-2,-1,0,1,2\}$ inside $\Z/n\Z$ for some large odd $n$ then the set $A=\{-2,0,2\}$ is a symmetric generating set for $\Z/n\Z$ with $|A|\ge\frac{1}{2}|P|$, but we do not have $P\subset O(1)A$ as $n\to\infty$. One can also take $P=\{-n-1,-n,-n+1,-1,0,1,n-1,n,n+1\}$ and $A=\{\pm1\}$ in $\Z$.
\end{remark}

We use the following standard facts from additive combinatorics. 

\begin{lemma}\label{lem:Frei.comp}
Let $G$ be a group, let $\eps<1$, and suppose that $A$ is a finite symmetric subset of $G$ satisfying $|A^3|\le(1+\eps)|A|$. Then $A^2$ is a subgroup of $G$ of size at most $(1+\eps)|A|$.
\end{lemma}
\begin{proof}
It is certainly true that $|A^2|\le(1+\eps)|A|$. Note also that if $x,y\in A^2$ then $x^{-1}A$ and $yA$ have non-trivial intersection. In particular, this implies that $xy\in A^2$, and so $A^2$ is a group.
\end{proof}
\begin{lemma}\label{lem:proportionFiniteGroup}
Let $c>0$, let $G$ be a finite group and suppose that $A$ is a symmetric generating subset of $G$ satisfying $|A|\ge c|G|$. Then $G=A^{O_c(1)}$.
\end{lemma}
\begin{proof}
This follows from repeated application of Lemma \ref{lem:Frei.comp} with $\eps=\frac{1}{2}$, say.
\end{proof}

\begin{proof}[Proof of \cref{prop:rom.8}]
Since $|B_\Z(e;L)|=\prod_{i=1}^d(2L_i+1)$, the quantities $2L_i+1$ appear naturally in many places in this proof. However, to make the notation less cumbersome we use the rather loose bounds $(2L_i+1)\le3L_i$.

For each $i=1,\ldots,d$, the pigeonhole principle implies that there exists $x\in A$ such that $|A\cap(x+\langle e_i\rangle)|\ge3^{1-d}cL_i$, and hence that $|2A\cap\langle e_i\rangle)|\ge3^{1-d}cL_i$. There therefore exist $r_1,\ldots,r_d\in\Z$ with $r_i\ge3^{-d}L_i$ for each $i$ such that $r_ie_i\in2A$ for each $i$. Let $\Lambda=\langle r_ie_i\rangle$, noting that $A$ generates $\Z^d/\Lambda$ and that $|(A+\Lambda)/\Lambda|\ge3^{-d^2}cL_1\cdots L_d\ge2^{-d}3^{-d^2}c|\Z^d/\Lambda|$. It therefore follows from \cref{lem:proportionFiniteGroup} that $\Z^d=O_{c,d}(1)A+\Lambda$.

This implies in particular that $B_\Z(e;L)\subset O_c(1)A+\Lambda$. Given $b\in B_\Z(e;L)$, therefore, we may write $b=a+z$ with $a\in O_{c,d}(1)A$ and $z\in\Lambda$. This implies in particular that $z=b-a\in O_{c,d}(1)B_\Z(e;L)$, and hence $z\in O_{c,d}(1)\{r_1e_1,\ldots,r_de_d\}\subset O_{c,d}(1)A$. Thus $b\in O_{c,d}(1)A$, as required.
\end{proof}

\begin{proof}[Proof of Corollary \ref{cor:rom.8}]
We proceed by induction on $s$, noting that when $s=1$ this is simply Proposition \ref{prop:rom.8}. By definition there exists a free $s$-step nilpotent group $G$ on generators $x_1,\ldots,x_r$ such that, writing $u_1,\ldots,u_d$ for the ordered list of basic commutators of weight at most $s$ in the $x_i$, we have $P$ of the form $P=P_\ord(u;L)$ for some $L_1,\ldots,L_d$ satisfying \eqref{eq:n.prog.lengths}.

Writing $\pi$ for the projection $\pi:G\to G/G_s$, the induction hypothesis implies that
\begin{equation}\label{eq:projected.1}
\pi(P)\subset\pi(A)^{O_{c,r,s}(1)},
\end{equation}
and in particular that
\[
P\subset A^{O_{c,r,s}(1)}(A^{O_{c,r,s}(1)}P\cap G_s)\subset A^{O_{c,r,s}(1)}(P^{O_{c,r,s}(1)}\cap G_s).
\]
However, writing $u_t,\ldots,u_d$ for the basic commutators of weight exactly $s$ and denoting
\[
B=P_\ord(u_t,\ldots,u_d;L^{\chi(u_t)},\ldots,L^{\chi(u_d)}),
\]
it follows from Lemma \ref{lem:lengths.powers} that $P^{O_{c,r,s}(1)}\subset  P_\ord(u;O_{c,r,s}(L))$, and in particular $P^{O_{c,r,s}(1)}\cap G_s\subset B^{O_{c,r,s}(1)}$, and so in fact we have
\begin{equation}\label{eq:projected.2}
P\subset A^{O_{c,r,s}(1)}\cdot B^{O_{c,r,s}(1)}.
\end{equation}
The pigeonhole principle implies that there exists $p\in\pi(P)$ such that $|\pi^{-1}(p)\cap A|\ge c|B|$, and hence that $|G_s\cap A^2|\ge c|B|$. However, another application of Lemma \ref{lem:lengths.powers} implies that $A^2\cap G_s\subset A^2\cap B^{O_{r,s}(1)}$, and so there exists $m\ll_{r,s}1$ such that
\begin{equation}\label{eq:G_s.dense}
|A^2\cap B^m|\ge c|B|\gg_{r,s}c|B^m|.
\end{equation}
A commutator of weight $s$ in the $x_i$ depends only on the images of the $x_i$ in $G/[G,G]$, and so \eqref{eq:projected.1} implies that each $u_i$ of weight $s$ is contained in $A^{O_{c,r,s}(1)}$. In particular, there is a generating set for $G_s$ contained in $A^{O_{c,r,s}(1)}\cap B$. In light of \eqref{eq:G_s.dense}, Proposition \ref{prop:rom.8} therefore implies that $B\subset A^{O_{c,r,s}(1)}$, and so the result follows from \eqref{eq:projected.2}.
\end{proof}

\section{Construction of a proper progression}\label{sec:Bilu}

The main aim of this section is to prove Theorem \ref{thm:nilp.bilu}. The starting point of the argument is to view the nilpotent progression $P_0$ as the image of a free nilpotent progression as described in \cref{rem:free.image}. We then view this free nilpotent progression as lying in a free nilpotent Lie group and apply the geometry-of-numbers arguments from \cref{sec:geom-of-nos} in the Lie algebra of this free nilpotent Lie group.

In order to keep track of the various objects featuring in this argument, it will be convenient to introduce the following definition.
\begin{definition}[Lie coset progression]\label{def:Lie.prog}
Let $N$ be a group, let $H$ be a finite subgroup of $N$, let $G$ be a connected, simply connected nilpotent Lie group with Lie algebra $\g$, let $e_1,\ldots,e_d$ be a basis of $\g$ such that $\Gamma=\exp\langle e_1,\ldots,e_d\rangle$ is a group, let $L_1,\ldots,L_d\in\N$ be such that $(e,L)$ is in $C$-upper-triangular form, let $\pi:\Gamma\to N$ be a map such that $H\lhd\langle\pi(\Gamma)H\rangle$ and such that $\pi$ is a homomorphism modulo $H$, and write $u_i=\exp(e_i)$ for each $i$. Then the set
\[
P_\Lie(N,H,G,\g,\Gamma,\pi,e,u,L)=H\pi(P_\ord(u;L))
\]
is said to be a \emph{Lie coset progression of dimension $d$ in $C$-upper-triangular form} in $N$. We say that $P_\Lie(N,H,G,\g,\Gamma,\pi,e,u,L)$ is $m$-proper if the ordered coset progression $HP_\ord(u;L)$ is $m$-proper. If $H=\{1\}$ then we say simply that $P_\Lie(N,H,G,\g,\Gamma,\pi,e,u,L)$ is a \emph{Lie progression of dimension $d$ in $C$-upper-triangular form}.
\end{definition}

The main result of this section is then the following, which implies Theorem \ref{thm:nilp.bilu} and, in conjunction with Theorems \ref{thm:nilp.Frei} and \ref{thm:bgt.noprog} and Lemmas \ref{lem:lengths.powers} and \ref{lem:malcev.basis}, yields Corollary \ref{cor:Lie.model}.

\begin{theorem}\label{thm:nilp.bilu.detailed}
Let $P_0$ be a nilpotent progression of rank $r$ and step $s$. Then for every $m,C>0$ there exists a normal subgroup $H\lhd\langle P_0\rangle$ with $H\subset P_0^{O_{C,r,s,m}(1)}$, an $m$-proper Lie coset progression $P=P_\Lie(\langle P_0\rangle,H,G,\g,\Gamma,\pi,e,u,L)$ of dimension $d\ll_{r,s}1$ in $O_{C,r,s}(1)$-upper-triangular form such that the tuple $(u;L)$ is in $C$-upper-triangular form, and a set $X\subset\langle P_0\rangle$ with $|X|\ll_{r,s}1$ such that $HP_0\subset XP\subset HP_0^{O_{C,r,s,m}(1)}$.
\end{theorem}

Using \cref{rem:free.image} and embedding the free nilpotent group inside the free nilpotent Lie group as described above already brings us fairly close to the conclusion of \cref{thm:nilp.bilu.detailed}. The main issue with this is that $P_0$ may not be proper. The following proposition allows us to obtain the desired properness.

\begin{prop}\label{prop:bilu.main}
Let $N$ be a group, and let $P=P_\Lie(N,\{1\},G,\g,\Gamma,\pi,e,u,L)$ be a Lie progression of dimension $d$ in $C$-upper-triangular form. Suppose that $P$ is not $m$-proper. Then there exists a finite normal subgroup $H\lhd\langle P\rangle$ satisfying $H\subset P^{O_{C,d,m}(1)}$, and an $m$-proper Lie coset progression $P'=P_\Lie(\langle P\rangle,H,G',\g',\Gamma',\pi',e',u',L')$ of dimension $d'<d$ in $O_d(1)$-upper-triangular form such that $(u';L')$ is in $O_d(1)$-upper-triangular form, such that $G'$ is a quotient of $G$, and such that
\begin{equation}\label{eq:bilu.main}
HP\subset P'\subset HP^{O_{C,d,m}(1)}.
\end{equation}
\end{prop}

Proposition \ref{prop:bilu.main} essentially follows from a nilpotent version of the abelian argument that we described in the introduction. Key to this argument is the following inductive step, in which we use the geometry-of-numbers arguments developed in \cref{sec:geom-of-nos} to keep control over a Lie progression after projecting it to one of lower dimension. Here, and throughout this section, given an element $z$ of a Lie algebra $\g$ we write $z_\R$ for the one-dimensional subspace of $\g$ spanned by $z$.

\begin{prop}\label{prop:bilu}
Let $e_1,\ldots,e_d$ be a basis for a Lie algebra $\g$, and write $\Lambda$ for the lattice generated by $e_1,\ldots,e_d$. Suppose that $L_1,\ldots,L_d$ are integer lengths such that $(e;L)$ is in $C$-upper-triangular form. Let $z\ne0$ be a central element of $B_\Z(e;mL)$, and let $\varphi:\g\to\g/z_\R$ be the projection homomorphism. Then there exists a generating set $e'_1,\ldots,e'_{d-1}$ for $\varphi(\Lambda)$ and lengths $L'_1,\ldots,L'_{d-1}$ such that $B_\Z(e';L')$ is in $1$-upper-triangular form, and such that
\[
\varphi(B_\Z(e;L))\subset B_\Z(e';L')\subset\varphi(B_\Z(e;O_{C,d,m}(L))).
\]
\end{prop}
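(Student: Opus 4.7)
The plan is to project all the data to the quotient Lie algebra $\g' := \g/z_\R$ and apply the nilpotent Mahler-basis argument of Proposition \ref{prop:nilp.box} there. Set $\Lambda' := \varphi(\Lambda)$ and $B := \varphi(B_\R(e;L))$; then $\Lambda'$ is a lattice of rank $d-1$ in $\g'$ (since $\Lambda\cap z_\R$ has rank $1$), and $B$ is a symmetric convex body. Centrality of $z$ makes $\varphi$ a Lie algebra homomorphism, so the $C$-upper-triangular form of $(e;L)$ transfers: it implies immediately that $[\Lambda',\Lambda']\subset\Lambda'$, while expanding a bracket $[x,y]$ with $x,y\in B_\R(e;L)$ in the basis $e_i$ and invoking the upper-triangular bounds yields $[B,B]\subset O_{C,d}(1)\cdot B$.

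The main task is then to apply a variant of Proposition \ref{prop:nilp.box} accommodating the weaker conditions $[B,B]\subset K\cdot B$ with $K=O_{C,d}(1)$ and ``$K$-strict thickness'' (meaning $\tfrac{1}{K}B$ contains a generating set for $\Lambda'$) in place of $[B,B]\subset B$ and strict thickness. Inspecting the proof, both uses survive under these relaxations: the Mahler-basis step is unchanged, and the crucial inclusion $[a_i,a_j]\in\lambda_i\lambda_j B\subsetneq\lambda_i B$ becomes $[a_i,a_j]\in K\lambda_i\lambda_j B\subsetneq\lambda_i B$, still valid because $K$-strict thickness forces every $\lambda_j<1/K$; the upgrade to $1$-upper-triangular form is the same rescaling step at the end of the proof, with constants absorbed into $O_{C,d}(1)$. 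To arrange $K$-strict thickness, we pre-process by replacing each $L_i$ with $\max(L_i,K)$; this preserves $O_{C,d}(1)$-upper-triangular form of $(e;L)$ and enlarges $B$ by a factor $O_{C,d}(1)$, and each nonzero $\varphi(e_i)$ then lies in $\tfrac{1}{L_i}B\subset\tfrac{1}{K}B$, with these vectors generating $\Lambda'$.

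This produces a basis $e'_1,\ldots,e'_{d-1}$ of $\Lambda'$ and positive integers $L'_1,\ldots,L'_{d-1}$ such that $(e';L')$ is in $1$-upper-triangular form and
\[
B \;\subset\; B_\R(e';L') \;\subset\; O_{C,d}(1)\cdot B.
\]
The left-hand containment of the Proposition follows immediately: $\varphi(B_\Z(e;L))\subset B\cap\Lambda'\subset B_\R(e';L')\cap\Lambda'=B_\Z(e';L')$, the last equality because $e'_1,\ldots,e'_{d-1}$ is a $\Z$-basis of $\Lambda'$. For the right-hand containment, any $x'\in B_\Z(e';L')\subset O_{C,d}(1)\cdot B$ equals $\varphi(w)$ for some $w\in B_\R(e;O_{C,d}(L))$. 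Since $x'\in\Lambda'$, we can choose an arbitrary lift $\widetilde{w}\in\Lambda$ of $x'$ and then adjust by an appropriate integer multiple of the primitive vector in $\Lambda\cap z_\R$ (whose coordinates are still bounded by $mL_j$) to obtain a lift $w^\ast\in\Lambda$ within $\tfrac{1}{2}z$ of $w$. Since $z\in B_\Z(e;mL)$, the coordinates of $w^\ast$ are bounded by $O_{C,d}(L_j)+\tfrac{m}{2}L_j=O_{C,d,m}(L_j)$, so $w^\ast\in B_\Z(e;O_{C,d,m}(L))$ and lifts $x'$, as required.

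The hardest part will be verifying carefully that the Mahler-basis and successive-minima steps of the proof of Proposition \ref{prop:nilp.box} genuinely survive the relaxations $[B,B]\subset KB$ and $K$-strict thickness while still delivering a \emph{clean} $1$-upper-triangular conclusion; the lifting and pre-processing steps, by contrast, should be routine bookkeeping once this variant is in hand.
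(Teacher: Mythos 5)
Your overall strategy---project to $\g/z_\R$, apply a Mahler-basis argument to obtain a basis $e'$ and lengths $L'$ with $B\subset B_\R(e';L')\subset O_{C,d}(1)B$, then deduce the two containments---matches the paper's. The genuine difference is in the right-hand containment $B_\Z(e';L')\subset\varphi(B_\Z(e;O_{C,d,m}(L)))$. The paper obtains this via a cardinality argument: Bilu's volume lemma (Lemma \ref{lem:rom.5}) and the choice of $z$ unimodular give $|B_\Z(e';L')|\le m\,|\varphi(B_\Z(e;L))|$, and Proposition \ref{prop:rom.8} then shows that the dense symmetric generating set $\varphi(B_\Z(e;L))\subset B_\Z(e';L')$ covers $B_\Z(e';L')$ in $O_{d,m}(1)$ steps. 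Your direct lifting is shorter and more elementary: any $x'\in B_\Z(e';L')\subset O_{C,d}(1)B$ has a real preimage $w\in B_\R(e;O_{C,d}(L))$, and shifting an arbitrary $\Lambda$-lift of $x'$ by an integer multiple of the primitive $z_0\in\Lambda\cap z_\R$ yields a $\Lambda$-lift $w^*$ with $w^*-w\in[-\tfrac12,\tfrac12]z_0$, whence $w^*\in B_\Z(e;O_{C,d,m}(L))$ because $z_0$ has coordinates bounded by $mL_j$. This sidesteps Lemma \ref{lem:rom.5} and Proposition \ref{prop:rom.8} entirely and makes the role of the hypothesis $z\in B_\Z(e;mL)$ completely transparent. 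The other point of divergence is cosmetic: rather than rescaling each $L_i$ by a recursively-defined constant $c_i\ll_{C,d}1$ so that the projected body satisfies $[B,B]\subset B$ and strict thickness exactly (so that Proposition \ref{prop:nilp.box} applies as stated), you run a variant of Proposition \ref{prop:nilp.box} under the relaxed hypotheses $[B,B]\subset KB$ and $K$-strict thickness; this works, though you should take a little care to keep the inequality $\lambda_j<1/K$ strict---pre-processing with $\max(L_i,K+1)$ rather than $\max(L_i,K)$, say, or strengthening the definition of $K$-strict thickness slightly---since your claim that ``$K$-strict thickness forces every $\lambda_j<1/K$'' is off by an endpoint as stated.
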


\begin{proof}
We may assume that $z$ is unimodular with respect to $\Lambda$, and hence complete $z$ to a basis of $\Lambda$. We define $\|\,\cdot\,\|$ to be the Euclidean norm with respect to which this basis is orthonormal.

Write $B=\varphi(B_\R(e;L))$, and note that on multiplying the lengths $L$ by constants depending only on $C,d$ we may assume that $[B,B]\subset B$ and that $B$ is strictly thick with respect to $\varphi(\Lambda)$. We also have $[\Lambda,\Lambda]\subset\Lambda$, and so Proposition \ref{prop:nilp.box} implies that there exists a basis $e'_1,\ldots,e'_{d-1}$ for $\varphi(\Lambda)$ and lengths $L'_1,\ldots,L'_{d-1}$ such that $B_\Z(e';L')$ is in $1$-upper-triangular form, and such that
\begin{equation}\label{eq:bilu.1}
B\subset B_\R(e';L')\subset O_{C,d}(1)B.
\end{equation}
We claim in addition that
\begin{equation}\label{eq:proj.size}
|B_\Z(e;L)|\ll\max\left\{1,\|z\|_{B_\R(e;L)}^{-1}\right\}\cdot|\varphi(B_\Z(e;L))|.
\end{equation}
Indeed, if $u,v\in  B_\Z(e;L)$ satisfy $\varphi(u)=\varphi(v)$ then we have $u-v\in z_\R\cap B_\Z(e;2L)$. However, the fact that $z$ is unimodular implies that
\[
|z_\R\cap B_\Z(e;2L)|\ll\max\left\{1,\|z\|_{B_\R(e;L)}^{-1}\right\},
\]
and so each element of $\varphi(B_\Z(e;L))$ has at most that number of preimages in $B_\Z(e;L)$, and \eqref{eq:proj.size} is proved.

We then have
\begin{align*}
|B_\Z(e';L')| &\ll_d\vol(B_\R(e';L')) \\
        &\ll_{C,d}\vol(B)      &\text{by \eqref{eq:bilu.1}} \\
        &\ll_d\|z\|_{B_\R(e;L)}\vol(B_\R(e;L))      &\text{by Lemma \ref{lem:rom.5}} \\
        &\le\|z\|_{B_\R(e;L)}\cdot|B_\Z(e;L)| \\
        &\ll\max\{1,\|z\|_{B_\R(e;L)}\}\cdot|\varphi(B_\Z(e;L))|   &\text{by \eqref{eq:proj.size}} \\
        &\le m|\varphi(B_\Z(e;L))|,
\end{align*}
and so the desired result follows from Proposition \ref{prop:rom.8} and the first inclusion of \eqref{eq:bilu.1}.
\end{proof}

The next lemma shows that given a progression that is not proper, the lack of properness is witnessed at some \emph{central} element, which will ultimately allow us to apply \cref{prop:bilu}.

\begin{lemma}\label{lem:collision}
Let $e_1,\ldots,e_d$ be a basis of the Lie algebra $\g$ of a connected, simply connected nilpotent Lie group $G$, and let $L_1,\ldots,L_d$ be positive integers such that $(e;L)$ is in $C$-upper-triangular form. Suppose that $\Gamma=\exp\langle e_1,\ldots,e_d\rangle$ is a subgroup of $G$ and that $H\lhd\Gamma$ is a normal subgroup of $\Gamma$. Suppose that there exist $\ell_1,\ldots,\ell_d,\ell_1',\ldots,\ell_d'$ with $|\ell_i|,|\ell_i'|\le mL_i$ for each $i$ and $\ell_i\ne\ell_i'$ for at least one $i$ such that
\[
x=\exp(\ell_1e_1+\ldots+\ell_de_d)\exp(-\ell'_1e_1-\ldots-\ell'_de_d)\in H.
\]
Then there exists some non-zero $z\in B_\Z(e;O_{C,d,m}(L))\cap\z(\g)$ such that $\exp z\in H$.
\end{lemma}
\begin{proof}
Note that $x\ne1$ since $\ell_1e_1+\ldots+\ell_de_d\ne\ell'_1e_1+\ldots+\ell'_de_d$. It follows from Lemmas \ref{lem:upper-tri.dilate} and \ref{lem:QL.group} that
\[
x\in\exp B_\Z(e;O_{C,d,m}(L))\cap H.
\]
If $x$ is not central then there is some $i$ such that $[x,\exp e_i]\ne1$. The normality of $H$ and further applications of Lemmas \ref{lem:upper-tri.dilate} and \ref{lem:QL.group} imply that $[x,\exp e_i]\in\exp B_\Z(e;O_{C,d,m}(L))\cap H$. We may therefore, following Tao \cite[\S4]{tao.growth}, replace $x$ by $[x,\exp e_i]$ and repeat until we have some non-trivial central element $x'\in\exp B_\Z(e;O_{C,d,m}(L))\cap H$ (noting that this will require at most $d$ repetitions). The lemma is then satisfied by taking $z=\log x'$.
\end{proof}

By repeatedly applying \cref{prop:bilu,lem:collision} we arrive at the following result.

\begin{prop}\label{prop:biluGroup}
Let $N$ be a group, let $G$ be a connected, simply connected nilpotent Lie group, let $e_1,\ldots,e_d$ be a basis for the corresponding Lie algebra $\g$, and suppose that $B_\Z(e;L)$ is in $C$-upper-triangular form. Write $\Lambda=\langle e_1,\ldots,e_d\rangle$, and suppose that $\Gamma=\exp\Lambda$ is a group. Let $m,m'>0$, and suppose further that $\pi:\Gamma\to N$ is a surjective homomorphism with respect to which $B_\Z(e;L)$ is not $m$-proper.

Then there exist $d'<d$; positive integers $L'_1,\ldots,L'_{d'}$; a connected, simply connected nilpotent Lie group $G'$ that is a quotient of $G$ whose Lie algebra $\g'$ has a basis $e'_1,\ldots,e'_{d'}$ such that $(e';L')$ is in $1$-upper-triangular form and such that, writing $\Lambda'=\langle e'_1,\ldots,e'_{d'}\rangle$, the image $\Gamma'=\exp\Lambda'$ is a group; a finite normal subgroup $H\lhd N$ satisfying
\[
H\subset\pi(\exp B_\Z(e;O_{C,d,m,m'}(L)));
\]
and a homomorphism $\pi':\Gamma'\to N/H$ with respect to which $B_\Z(e';L')$ is $m'$-proper such that
\[
H\pi(\exp B_\Z(e;rL))\subset\pi'(\exp B_\Z(e';rL'))\subset H\pi(\exp B_\Z(e;O_{C,d,m,m'}(rL)))
\]
for every $r\in\N$.
\end{prop}

\begin{proof}First note that we may assume $m=m'$. Indeed, if $m>m'$ then proving that $B_\Z(e';L')$ is $m$-proper with respect to $\pi'$ certainly implies that it is $m'$-proper with respect to $\pi'$, whilst if $m<m'$ and $B$ is not $m$-proper with respect to $\pi$ then neither is it $m'$-proper with respect to $\pi$.

We will give recursive definitions of various sequences of Lie algebras, Lie groups, groups, homomorphisms and elements. To start the process, we denote $G_0=G$, $\g_0=\g$, $\Lambda_0=\Lambda$, $\Gamma_0=\Gamma$ and $e^{(0)}_i=e_i$ for $i=1,\ldots,d$, and set $H_0=\{1\}$. For the recursive step, suppose we have already defined a normal subgroup $H_j\lhd N$, a connected, simply connected nilpotent Lie group $G_j$ that is a homomorphic image of $G$ with Lie algebra $\g_j$ with basis $e^{(j)}_1,\ldots,e^{(j)}_{d-j}$, and lengths $L^{(j)}_1,\ldots,L^{(j)}_{d-j}$ such that $(e^{(j)};L^{(j)})$ is in $1$-upper-triangular form (or $C$-upper-triangular form for $j=0$) and such that, writing $\Lambda_j=\langle e^{(j)}_1,\ldots,e^{(j)}_{d-j}\rangle$, the set $\Gamma_j=\exp\Lambda_j$ forms a group. Suppose moreover that we have defined a homomorphism $\pi_j:\Gamma_j\to N/H_j$.

We stop this recursive process if $B_\Z(e^{(j)};L^{(j)})$ is $m$-proper with respect to $\pi_j$. If $B_\Z(e^{(j)};L^{(j)})$ is not $m$-proper with respect to $\pi_j$ then Lemma \ref{lem:collision} implies that there exists some non-zero
\[
z\in B_\Z(e^{(j)};O_{C,d,m}(L^{(j)}))\cap\z(\g_j)\cap\log\ker\pi_j.
\]
In that case, define $H_{j+1}$ to be the pullback to $N$ of $\pi_j(\Gamma_j\cap\exp z_\R)$, noting that $H_{j+1}$ is normal (since $z$ is central) and that
\begin{equation}\label{eq:H.controlled}
H_{j+1}/H_j\subset\pi_j(\exp B_\Z(e^{(j)};O_{C,d,m}(L^{(j)}))).
\end{equation}
Define $G_{j+1}=G_j/(\exp z_\R)$, noting that this quotient is connected and simply connected and a homomorphic image of $G$, write $\Phi_j:G_j\to G_{j+1}$ for the projection homomorphism, and write $\Gamma_{j+1}=\Phi_j(\Gamma_j)$. Similarly, define $\g_{j+1}=\g_j/z_\R$, write $\varphi_j:\g_j\to\g_{j+1}$, and write $\Lambda_{j+1}=\varphi_j(\Lambda_j)$. Define $\pi_{j+1}$ so that the diagram
\[
\begin{CD}
 \Gamma_j                      @>\pi_j>>           N/H_j\\
@V\Phi_{j+1}|_{\Gamma_j}VV               @VVV\\
\Gamma_{j+1}     @>\pi_{j+1}>>    N/H_{j+1}
\end{CD}
\]
commutes.

Proposition \ref{prop:bilu} implies that there exists a basis $e^{(j+1)}_1,\ldots,e^{(j+1)}_{d-j-1}$ for $\g_{j+1}$ and lengths $L^{(j+1)}_i$ such that $(e^{(j+1)};L^{(j+1)})$ is in $1$-upper-triangular form, and such that
\begin{align*}
\varphi_{j+1}(B_\Z(e^{(j)};L^{(j)}))&\subset B_\Z(e^{(j+1)};L^{(j+1)})\\
    &\subset\varphi_{j+1}(B_\Z(e^{(j)};O_{C,d,m}(L^{(j)}))).
\end{align*}
Note that this is where the geometry of numbers enters the argument.

Since the dimension of $\g_j$ drops at each stage, this process necessarily terminates for some $j\le d$, at which point $B_\Z(e^{(j)};L^{(j)})$ is $m$-proper with respect to $\pi_j$ by definition. We define $\varphi:\g_0\to\g_j$ and $\Phi:G_0\to G_j$ via $\varphi=\varphi_j\circ\cdots\circ\varphi_1$ and $\Phi=\Phi_j\circ\cdots\circ\Phi_1$, so that the diagram 
\begin{equation}\label{eq:diagram}
\begin{CD}
\Lambda_0        @>\exp>>        \Gamma_0       @>\pi>>      N\\
@V\varphi|_{\Lambda_0}VV     @V\Phi|_{\Gamma_0}VV       @VVV\\
\Lambda_j         @>\exp>>        \Gamma_j        @>\pi_j>>   N/H_j
\end{CD}
\end{equation}
commutes. Abbreviating $H=H_j$, $\pi'=\pi_j$, $e_i'=e^{(j)}_i$ and $L_i'=L^{(j)}_i$, and setting $d'=d-j$, we therefore have
\begin{align*}
\varphi(B_\Z(e;L^{(0)}))&\subset B_\Z(e';L') \label{eq:dim.red}\\
&\subset\varphi(B_\Z(e;O_{C,d,m}(L^{(0)}))),
\end{align*}
and so commutativity of the diagram \eqref{eq:diagram} implies that
\[
H\pi(\exp B_\Z(e;rL))\subset\pi'(\exp B_\Z(e';rL'))\subset H\pi(\exp B_\Z(e;O_{C,d,m}(rL)))
\]
for every $r\in\N$, as required.
\end{proof}

\begin{proof}[Proof of Proposition \ref{prop:bilu.main}]
For each $d\in\N$ and $C>0$, let $k_{C,d}\in\N$ be large enough to replace the constants implicit in the containments \eqref{eq:dilates.concl.1} and \eqref{eq:dilates.concl.2}, chosen such that $k_{C,d}$ is increasing in $d$. Defining $p_{C,d}$ as in Proposition \ref{prop:dilates}, it follows immediately from that proposition that $B_\Z(e;L)$ is not $p_{C,d}(m)$-proper with respect to $\pi$, and so Lemma \ref{lem:upper-tri.dilate} and Proposition \ref{prop:biluGroup} imply that there exist $d'<d$; positive integers $L'_1,\ldots,L'_{d'}$; a nilpotent Lie group $G'$ that is a homomorphic image of $G$ whose Lie algebra $\g'$ has a basis $e'_1,\ldots,e'_{d'}$ such that $(e';L')$ is in $1$-upper-triangular form and such that, writing $\Lambda'=\langle e'_1,\ldots,e'_{d'}\rangle$, the image $\Gamma'=\exp\Lambda'$ is a group; a finite normal subgroup $H\lhd\langle P\rangle$ satisfying
\begin{equation}\label{eq:bilu.H}
H\subset\pi(\exp B_\Z(e;O_{C,d,m}(L)));
\end{equation}
and a homomorphism $\pi':\Gamma'\to N/H$ with respect to which $\exp B_\Z(e';L')$ is $p_{1,d}(k_{1,d}m)$-proper and such that
\begin{equation}\label{eq:bilu.group}
H\pi(\exp B_\Z(e;k_{1,d}rL))\subset\pi'(\exp B_\Z(e';rL'))\subset H\pi(\exp B_\Z(e;O_{C,d,m}(rL)))
\end{equation}
for every $r\in\N$. Note then that
\begin{align*}
H\pi(P_\ord(u;L))&\subset H\pi(\exp B_\Z(e;k_{C,d}L)) & \text{(by Proposition \ref{prop:dilates})} \\
          &\subset\pi'(\exp B_\Z(e';L')) & \text{(by \eqref{eq:bilu.group})} \\
          &\subset\pi'(P_\ord(u';k_{1,d}L')) & \text{(by Proposition \ref{prop:dilates})} \\
          &\subset\pi'(\exp B_\Z(e';O_d(L'))) & \text{(by Lemma \ref{lem:upper-tri.dilate} and Proposition \ref{prop:dilates})} \\
          &\subset H\pi(\exp B_\Z(e;O_{C,d,m}(L))) & \text{(by \eqref{eq:bilu.group})} \\
          &\subset H\pi(P_\ord(u;O_{C,d,m}(L))) & \text{(by Lemma \ref{lem:upper-tri.dilate} and Proposition \ref{prop:dilates})} \\
          &\subset H\pi(P_\ord(u;L))^{O_{C,d,m}(L)} & \text{(by \eqref{eq:lengths.powers})},
\end{align*}
which proves \eqref{eq:bilu.main} with $P'=\pi'(P(u';k_{1,d}L'))$. It follows from Lemma \ref{lem:upper-tri.dilate} that $(e',k_{1,d}L')$ is in $O_d(1)$-upper-triangular form. It follows from Proposition \ref{prop:dilates} that $P_\ord(u',L')$ is $k_{1,d}m$-proper with respect to $\pi'$ and in $O_d(1)$-upper-triangular form, and hence that $P(u';k_{1,d}L')$ is $m$-proper with respect to $\pi'$ and, by Lemma \ref{lem:upper-tri.dilate}, in $O_d(1)$-upper-triangular form. Finally, it follows from Proposition \ref{prop:dilates}, Lemma \ref{lem:upper-tri.dilate} and \eqref{eq:lengths.powers} and \eqref{eq:bilu.H} that $H\subset P^{O_{C,d,m}(1)}$.
\end{proof}

\begin{proof}[Proof of Theorem \ref{thm:nilp.bilu.detailed}]
First, note that it is sufficient to prove the theorem with the conclusion that both tuples $(e;L)$ and $(u;L)$ are in $O_{r,s}(1)$-upper-triangular form. Indeed, if we obtain such a conclusion and $P_\ord(u;L)$ is $m$-proper then upon multplying the lengths $L_i$ by constants $k_i$ depending only on $C,r,s$ we can put $(u;L)$ in $C$-upper-triangular form whilst ensuring that $P_\ord(u;L)$ is $(m/\max_ik_i)$-proper and leaving $(e;L)$ in $O_{C,r,s}(1)$-upper-triangular form.

Write $N=\langle P\rangle$. By definition (see also Remark \ref{rem:free.image}) there exists a free nilpotent progression $P_\ord(u;L)$ of total rank $d\ll_{r,s}1$ and a homomorphism $\pi:\langle P_\ord(u;L)\rangle\to N$ such that $P=\pi(P_\ord(u;L))$. Proposition \ref{prop:nilp.prog.upper.tri} implies that $P_\ord(u;L)$ is in $O_{r,s}(1)$-upper-triangular form. Write $\Gamma=\langle P_\ord(u;L)\rangle$, and recall from Section \ref{sec:coords} that we may assume that $\Gamma$ is a subgroup of the free nilpotent Lie group $G$ of rank $r$ and step $s$. Denote the Lie algebra of $G$ by $\g$, and write $e_i=\log u_i$ for each $i$.

Proposition \ref{prop:nilbox.upper-tri} followed by Lemma \ref{lem:dilates.subgroup} implies that there exist integers $Q_1,\ldots,Q_d\ll_{r,s}1$ such that $(Q_1e_1,\ldots,Q_de_d;L)$ is in $O_{r,s}(1)$-upper-triangular form and such that $\exp\langle Q_1e_1,\ldots,Q_de_d\rangle$ is a group. Theorem \ref{thm:collected.unique} and Proposition \ref{prop:nilp.prog.upper.tri} imply that we may apply Lemma \ref{lem:coset.reps} to $P_\ord(u;L)$, and this shows that there exists $X\subset P_\ord(u;L)^{O_{r,s}(1)}$ with $|X|\ll_{r,s}1$ such that
\begin{equation}\label{eq:nilp.bilu.cosets}
P_\ord(u;L)\subset XP_\ord(u_1^{Q_1},\ldots,u_d^{Q_d};O_{r,s}(L)).
\end{equation}
Proposition \ref{prop:dilates} implies that $P_\ord(u_1^{Q_1},\ldots,u_d^{Q_d};L)$ is in $O_{r,s}(1)$-upper triangular form, and it is trivially the case that $P_\ord(u_1^{Q_1},\ldots,u_d^{Q_d};L)\subset P_\ord(u;L)^{O_{r,s}(1)}$.

The theorem therefore holds if $P_\ord(u_1^{Q_1},\ldots,u_d^{Q_d};L)$ is $m$-proper with respect to $\pi$; if it is not then the theorem follows from Proposition \ref{prop:bilu.main}.
\end{proof}

\begin{proof}[Proof of Theorem \ref{thm:ab.Bilu}]
This is immediate from the abelian cases of Theorem \ref{thm:nilp.Frei} and Proposition \ref{prop:bilu.main}. We emphasise that the abelian case of Theorem \ref{thm:nilp.Frei} is originally due to Green and Ruzsa \cite[Theorem 1.1]{green-ruzsa}, with a slight modification by Breuillard and Green (see \cite[Theorem 1.3$'$]{bg}).
\end{proof}

\begin{remark*}Theorem \ref{thm:ab.Bilu} does not need the full strength of \cref{prop:bilu.main}, the proof of which is much simpler in the abelian case. We leave it to the interested reader to work out the details of this simpler proof, and thus to shorten the proof of Theorem \ref{thm:ab.Bilu}.
\end{remark*}

\section{Sets of polynomial growth in terms of progressions}\label{sec:poly.growth.progs}
The basic idea behind our proof of Theorem \ref{thm:persitstence.poly.growth} is to control the growth of $S^m$ in terms of the growth of a certain nilprogression of bounded rank and step. In its simplest form, the tool that allows us to do this is the following result, which essentially appeared in \cite{tao.growth} and was implicit in \cite{bt}.
\begin{prop}\label{prop:inverse}
Let $M,D>0$, and let $S$ be a finite symmetric generating set for a group $G$ such that $1\in S$. Then there exists $N=N_{M,D}$ such that if $|S^n|\le Mn^D|S|$ for some $n\ge N$ then there exist $X\subset S^{O_D(1)}$ with $|X|\ll_D1$ and a $1$-proper ordered coset progression $HP$ of rank at most $O_D(1)$ in $O_D(1)$-upper-triangular form such that $XHP^r\subset S^{rn}\subset XHP^{O_D(r)}$ for every $r\in\N$.
\end{prop}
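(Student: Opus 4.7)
The plan is to proceed in three stages: (i) locate a scale of bounded doubling, (ii) extract an ordered coset progression via the two structure theorems, and (iii) verify the two-sided inclusions.

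For (i), I would apply a dyadic pigeonhole to the sequence $|S|,|S^2|,|S^4|,\ldots,|S^{2^{\lfloor\log_2 n\rfloor}}|$: the consecutive ratios multiply to at most $Mn^D$ over $\asymp\log_2 n$ terms, so provided $n\ge N_{M,D}$ some ratio is at most $2^{D+1}$; by restricting to a truncated range of dyadic scales I can moreover arrange this to occur at a scale $n_0\le n/C_D$, for any prescribed constant $C_D$ to be fixed later. Set $A=S^{n_0}$; then $|A^2|\le 2^{D+1}|A|$, and by Tao's product-set theorem \cite{tao.product.set} $A^4$ is a $K_D$-approximate group for some $K_D$ depending only on $D$.

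For (ii), I would apply Theorem \ref{thm:bgt.noprog} to $A^4$ to obtain a subgroup $\Gamma\le\langle S\rangle$ with a normal subgroup $H\lhd\Gamma$ contained in $A^{O_D(1)}$, with $\Gamma/H$ nilpotent of step $O_D(1)$, together with a $K_D^3$-approximate group $B\subset A^{O_D(1)}\cap\Gamma$ and a set $X_0$ of size $\ll_D 1$ covering $A^4$ by left-translates of $B$. Working in the nilpotent quotient $\Gamma/H$ and applying Theorem \ref{thm:nilp.Frei.proper} with $m=1$ and an appropriately chosen $C$, I obtain a $1$-proper ordered coset progression $HP$ of rank $\ll_D 1$ in $O_D(1)$-upper-triangular form, with $HP\subset B^{O_D(1)}\subset A^{O_D(1)}$, and with a set $X_1$ of size $\ll_D 1$ covering $B$ by translates of $HP$. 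Setting $X=X_0 X_1\subset S^{O_D(1)}$, we have $A^4\subset XHP$, and by choosing $C_D$ larger than the hidden constant we can ensure $HP\subset S^n$.

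For (iii), the lower bound is immediate: since $1\in X$ and $XHP\subset S^n$, one has $XHP^r\subset(XHP)^r\subset(S^n)^r=S^{rn}$. For the upper bound, I would write $S^{rn}\subset A^{C_D r}\subset(XHP)^{C_D r}$ and reduce to showing $(XHP)^{C_D r}\subset XHP^{O_D(r)}$. This uses the standard fact, implicit in \cite[Proposition C.1]{nilp.frei}, that an ordered coset progression in $O_D(1)$-upper-triangular form of bounded rank and step satisfies $(HP)^k\subset HP^{O_D(k)}$, together with the observation that commuting an element of $X$ past $HP$ only introduces commutators controlled by the upper-triangular form. The main obstacle will be precisely this last step --- absorbing repeated copies of the cover set $X$ into a single copy followed by a single power of $HP$. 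Since $X$ need not normalise $HP$ exactly, commuting each $X$-element past an $HP$-element produces commutator terms that must be pushed through the upper-triangular structure. The upper-triangular form confines such commutators to nilprogressions on strictly higher-indexed generators with proportionally smaller side lengths, which gives the telescoping control required so that $r$-fold products inflate the nilprogression power only by a factor of $O_D(r)$ rather than exponentially in $r$.
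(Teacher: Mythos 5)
The gap is in stage (iii). You reduce to showing $(XHP)^{C_D r}\subset XHP^{O_D(r)}$ and assert that ``commuting an element of $X$ past $HP$ only introduces commutators controlled by the upper-triangular form.'' But the upper-triangular condition \eqref{eq:C-upp-tri} controls only the commutators $[u_i^{\pm1},u_j^{\pm1}]$ among the generators of $P$; it says nothing about commutators $[x,u_i]$ for $x\in X$. The set $X$ is an arbitrary covering set of bounded cardinality, and there is no reason that $[x,u_i]$ should lie in a bounded power of $HP$, or even in $\langle HP\rangle$. So the ``telescoping control'' you invoke simply is not available, and the interspersed copies of $X$ in $(XHP)^{C_Dr}$ cannot be swept to the left by any commutation argument of the type you describe.

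The paper's route avoids this problem entirely. Its Proposition \ref{prop:inverse+} establishes the two quantitative containments $X\subset S^t$ with $t\ll_D1$ (equation \eqref{eq:X.in.S^C}) and $S^{cn}\subset XHP\subset S^n$ with $c\gg_D1$ (equation \eqref{eq:point.where.we.assume.minimality}); then Lemma \ref{lem:set.growth.in.terms.of.prog} --- a purely combinatorial statement using only $X\subset S^t$ and $S^{2t}\subset XA$ --- gives $S^{rt}\subset X(A\cap S^{-t}S^{2t})^{r-1}$. Its inductive step writes $S^{rt}\subset S^tX(\cdots)^{r-2}$ and then absorbs $S^tX\subset S^{2t}\subset X(A\cap S^{-t}S^{2t})$ into a single fresh covering on the left. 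The interior $X$ is thus swallowed into the next power of $S$ and re-covered, never commuted past $HP$. This mechanism is a local version of the argument from \cite[Proposition 2.9]{bt}, which the paper's Proposition \ref{prop:inverse''} uses directly to get $S^{rk}\subset XA^r$ from the structure of the approximate group. You would need to recover something of this shape --- a containment $S^{cn}\subset XHP$ at a fixed scale commensurate with $n$, together with $X\subset S^{O_D(1)}$ --- before the boot-strapping can go through. Incidentally, the paper also passes through Proposition \ref{prop:powers.are.nilprogs} to rescale the nilprogression from the good pigeonhole scale $k$ up to $n$ before applying Theorem \ref{thm:nilp.bilu} for properness; applying Theorem \ref{thm:nilp.Frei.proper} at scale $k\ll n$ as you do and then raising to a power does not by itself preserve properness or upper-triangular form at the new scale.

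Your stages (i) and (ii) are broadly on the right track --- the dyadic pigeonhole is exactly Lemma \ref{lem:poly.pigeon}, and invoking Theorems \ref{thm:bgt.noprog} and \ref{thm:nilp.Frei} is in the spirit of Proposition \ref{prop:inverse''} --- but without the $S^{rk}\subset XA^r$ growth formula and the absorption lemma, stage (iii) does not close.
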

\begin{remark*}
In the converse direction, it follows from Lemma \ref{lem:lengths.powers} that if there exist a finite set $X$ and an $m$-proper ordered coset progression $HP$ of rank $d$ in $C$-upper-triangular form such that $S^n\subset XHP^k$ then $|S^n|\ll_{C,d,m,k,|X|}|HP|$. Proposition \ref{prop:inverse} is therefore already an example of an inverse theorem for sets of polynomial growth with matching direct theorem of the type described in \cite[\S1]{tao.growth}.
\end{remark*}
\begin{remark}
At the expense of making the other bounds dependent on $m$ and $C$ one could, for any $m$ and $C$, insist that the ordered coset progression in Proposition \ref{prop:inverse} be $m$-proper and in $C$-upper-triangular form.
\end{remark}

We actually need the following more-detailed version of \cref{prop:inverse}. Given an $m$-proper ordered coset progression $HP$ with $P=P_\ord(u_1,\ldots,u_d;L)$, every element $x\in HP_\ord(u;mL)$ has, by definition, a unique representation
\[
x=hu_1^{x_1}\cdots u_d^{x_d}
\]
with $h\in H$ and $|x_i|\le mL$; in this case we call $x_i$ the \emph{$u_i$-coordinate} of $x$ with respect to $HP$.

\begin{prop}\label{prop:inverse+}
Let $M,D>0$, let $m\ge1$, let $k\in\N$, and let $S$ be a finite symmetric generating set for a group $G$ such that $1\in S$. Then there exists $N=N_{\cD,m,k}$ such that if $n\ge\max\{N,NM\}$ and
\begin{equation}\label{eq:poly.growth}
|S^n|\le Mn^D|S|
\end{equation}
then there exist $t=t_\cD\in\N$ and
\begin{equation}\label{eq:X.in.S^C}
X\subset S^t
\end{equation}
containing $1$ with $|X|\ll_D1$, and there exist $C=C_\cD>0$ and an $m$-proper ordered coset progression $HP(u;L)$ of rank $d\ll_\cD1$ in $C$-upper-triangular form, such that
\begin{equation}\label{eq:XHP.proper}
xHP^k\cap yHP^k=\varnothing
\end{equation}
for every $x,y\in X$ with $x\ne y$, and such that
\begin{equation}\label{eq:inverse}
XHP^r\subset S^{rn}\subset XHP^{O_{\cD,m,k}(r)}
\end{equation}
for every $r\in\N$. Moreover, there exists $c\gg_{\cD,m,k}1$ such that $cn\in2\Z$ and
\begin{equation}\label{eq:point.where.we.assume.minimality}
S^{cn}\subset XHP\subset S^n,
\end{equation}
and such that if we define $\zeta(i)$ as before Lemma \ref{lem:lengths.powers} then for every generator $u_i$ of $P$ with $\zeta(i)=1$ there exists $s\in S^{cn}$ and $x\in X$, and $p\in HP$ with non-zero $u_i$-coordinate, such that $s=xp$.
\end{prop}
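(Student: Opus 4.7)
The plan is to combine a pigeonhole argument at scale $n$ with the structure Theorem \ref{thm:nilp.Frei.proper} proved earlier in the paper; the proof essentially follows the approach of \cite{tao.growth}, with additional care to arrange the minimality condition on generators with $\zeta(i)=1$. First I would use a pigeonhole argument on the ratios $|S^{3\cdot 2^i}|/|S^{2^i}|$ for $0\le i\le\log_2 n$: if all of them exceeded some constant $K_D$, then $|S^n|/|S|$ would grow faster than $K_D^{\log_2 n}$, contradicting \eqref{eq:poly.growth} once $n\ge N_{M,D}$. This locates a scale $\tilde m\le n$ comparable to $n$ at which $S^{\tilde m}$ has tripling bounded by $K_D$, and hence is a $K_D^{O(1)}$-approximate group after applying Pl\"unnecke--Ruzsa.

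Next I would apply Theorem \ref{thm:nilp.Frei.proper} (after reducing to the nilpotent case via Theorem \ref{thm:bgt.noprog}) to this approximate group, with properness parameter chosen as a suitable function of $m$, $k$ and $D$ so that the output progression remains $m$-proper after the manipulations in the final step. This produces an ordered coset progression $HP$ of rank $d\ll_D 1$ in $C_D$-upper-triangular form together with a set $X_0\subset\langle S\rangle$ with $|X_0|\ll_D 1$, satisfying $S^{\tilde m}\subset X_0 HP\subset S^{O_D(\tilde m)}$. I would then calibrate to scale $n$ by adjusting the side lengths: each generator $u_i$ of $HP$ lies in $S^{O_D(n)}$, so shortening its side length $L_i$ by the corresponding constant factor yields a sub-progression with $HP\subset S^n$. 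The containments \eqref{eq:inverse} and \eqref{eq:point.where.we.assume.minimality} then follow by iteration, using Lemma \ref{lem:lengths.powers} and the upper-triangular form to control the growth of $HP^r$.

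To enforce the disjointness \eqref{eq:XHP.proper}, I would take $X$ to be a maximal subset of $X_0\cup\{1\}\subset S^t$ such that the translates $xHP^k$ are pairwise disjoint; the bound $|X|\le|X_0|+1\ll_D 1$ is automatic, and the covering of $S^{rn}$ persists because each discarded $x_0\in X_0\setminus X$ satisfies $x_0HP^k\cap xHP^k\ne\varnothing$ for some $x\in X$, so that $x_0\in x\cdot HP^{2k}$. Finally, to enforce the condition on generators with $\zeta(i)=1$, I would apply a minimality argument: amongst all ordered coset progressions of rank at most $d$ satisfying the preceding conclusions and with $HP\subset S^n$, choose one minimising the total weighted rank $\sum_i\zeta(i)\log L_i$. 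If some generator $u_i$ with $\zeta(i)=1$ fails the required condition, then every representation $xu_1^{\ell_1}\cdots u_d^{\ell_d}$ of an element of $S^{cn}$ has $\ell_i=0$, so the sub-progression obtained by deleting $u_i$ (or halving $L_i$) still covers $S^{cn}$, contradicting minimality.

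The main obstacle is preserving the upper-triangular form and $m$-properness through the final two modifications: shortening a side length can in principle violate the nested commutator bounds \eqref{eq:C-upp-tri} governing weights larger than those being shortened, so the minimality step must be executed in order of decreasing $\zeta(i)$, and the properness parameter in Step 2 must be chosen conservatively enough that the eventual constant factor lost in each side length does not degrade $m$-properness below the required threshold.
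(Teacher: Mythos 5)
Your proposal follows the general shape of the paper's argument (pigeonhole to a small-tripling scale, apply the nilpotent Freiman theorem with properness, disjointify the translates, then trim superfluous weight-$1$ generators), but there are two genuine gaps in the middle of the argument.

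\smallskip\noindent\textbf{The calibration step is in the wrong direction.} The pigeonhole locates a scale $\tilde m$ at which $S^{\tilde m}$ has bounded tripling, but this scale is only guaranteed to lie somewhere in $[1,n]$, not to be comparable to $n$. (The paper's Lemma~\ref{lem:poly.pigeon} likewise only gives $n^\alpha<k<\beta n$, so $k$ can be as small as $n^{1/2}$.) Consequently, after applying the structure theorem to $S^{\tilde m}$ you obtain a coset progression $HP$ with $XHP\subset S^{O_D(\tilde m)}$; if $\tilde m\ll n$ this is far too small to contain $S^{cn}$ for $c\gg_D 1$, and shortening the side lengths makes it smaller still. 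What is actually required is to \emph{enlarge} the progression, namely to take a power $HP^{r_0}$ with $r_0\approx n/\tilde m$ so that the progression reaches scale $n$, and then to re-express $HP^{r_0}$ as a single ordered (indeed nilpotent) coset progression on the same generators so that upper-triangular form and properness remain visible. That re-expression is precisely Proposition~\ref{prop:powers.are.nilprogs}, and it is a genuine missing ingredient in your plan. (It is also needed inside the disjointification step, which inflates $HP$ to $HP^{O(k)}$ and then must re-package it; this is Lemma~\ref{lem:cosets.far.apart}.)

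\smallskip\noindent\textbf{Locating $X$ inside a bounded power of $S$ is not addressed.} Theorem~\ref{thm:nilp.Frei.proper} (combined with Theorem~\ref{thm:bgt.noprog}) only gives $X_0\subset\langle S\rangle$, with no control on which power of $S$ contains $X_0$, so your assertion that $X_0\cup\{1\}\subset S^t$ is unjustified. Yet the iteration that produces the two-sided containment \eqref{eq:inverse} for all $r$ — and likewise the lower bound $S^{cn}\subset XHP$ in \eqref{eq:point.where.we.assume.minimality} — requires $X\subset S^t$ with $t=t_D$ bounded, since Lemma~\ref{lem:set.growth.in.terms.of.prog} needs $X$ in a fixed power of $S$ to propagate the $r=2$ case. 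The paper's fix is the bootstrap of Lemma~\ref{lem:coset.reps.local}, which replaces $X$ by a subset $X'\subset S^{|X|}$ (taking advantage of the local disjointness \eqref{eq:local.coset}, which is exactly what the disjointification step supplies); this lemma is needed and absent from your outline.

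\smallskip\noindent\textbf{The minimality step.} Your weighted-rank minimisation would work, but the worry you raise about disrupting the commutator bounds when halving side lengths is unnecessary: a generator $u_i$ with $\zeta(i)=1$ by definition does not appear in the $P$-expression of any $[u_j^{\pm1},u_k^{\pm1}]$, so the paper simply deletes such a $u_i$ outright. Deletion leaves the upper-triangular constraints on the remaining generators untouched and preserves $m$-properness, so no minimisation over a weighted quantity, and no careful ordering of the deletions, is needed.
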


We start with a lemma showing that polynomial growth of $S^n$ implies small doubling of some $S^k$ with $k\le n$. This technique is completely standard, having been used in Gromov's proof of his polynomial growth theorem \cite{gromov}, for example, but we nonetheless include the following lemma in order to have a precise record of the dependence of the constants on one another.

\begin{lemma}\label{lem:poly.pigeon}
Let $M,D>0$, let $\alpha,\beta\in(0,1)$, and let $q\in\N$. Then there exists $N=N_\alpha$ such that if $S$ is a finite subset of a group with
\begin{equation}\label{eq:poly.pigeon}
|S^n|\le Mn^D|S|
\end{equation}
for some $n\ge\max\{N,M(q\beta^{-1})^\frac{D+1}{1-\alpha}\}$ then there exists $k\in\N$ satisfying $n^\alpha<k<\beta n$ such that $|S^{qk}|\le q^{\frac{D+1}{1-\alpha}}|S^k|$.
\end{lemma}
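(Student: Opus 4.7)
The plan is to carry out a standard telescoping pigeonhole: suppose for contradiction that $|S^{qk}|>q^{C(D+1)}|S^k|$ for \emph{every} integer $k$ with $n^\alpha<k<\beta n$, for some constant $C=C_\alpha$ to be chosen (I expect $C$ of order $1/(1-\alpha)$ will suffice). Then, iterating along a geometric sequence $k_{i+1}=qk_i$, this `bad' ratio compounds, while the hypothesis $|S^n|\le Mn^D|S|$ caps the final term; matching the two bounds forces $n$ to be at most some threshold $N_{M,D,q,\alpha,\beta}$, which yields the lemma.

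More concretely, first I would set $\beta':=\min(\beta,1/q)$ and $k_0:=\lceil n^\alpha\rceil+1$, and define $k_i:=q^ik_0$. Let $J$ be the largest integer with $k_{J-1}<\beta'n$; the choice $\beta'\le 1/q$ ensures that $k_J=qk_{J-1}\le n$, so that $|S^{k_J}|\le|S^n|\le Mn^D|S|$, while $k_{J-1}<\beta'n\le\beta n$ ensures that each $k_0,\dots,k_{J-1}$ lies in the interval $(n^\alpha,\beta n)$ where the assumed bad bound applies.

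Next I would apply the bad bound along $k_0\to k_1\to\cdots\to k_J$ to get
\[
|S^{k_J}|>q^{JC(D+1)}|S^{k_0}|\ge q^{JC(D+1)}|S|.
\]
Combining with the upper bound $|S^{k_J}|\le Mn^D|S|$ gives $q^{JC(D+1)}<Mn^D$, i.e.\ $JC(D+1)\log q<D\log n+\log M$. By construction $J\ge(1-\alpha)\log_q n-O_{\beta,q}(1)$, so this inequality reads $C(D+1)(1-\alpha)\log n\le D\log n+O_{M,\alpha,\beta,q}(1)$. Choosing $C=C_\alpha:=2/(1-\alpha)$ (so that the coefficient of $\log n$ on the left strictly exceeds $D$) yields an upper bound on $\log n$ depending only on $M,D,q,\alpha,\beta$, which gives the required threshold $N_{M,D,q,\alpha,\beta}$.

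The argument is essentially bookkeeping rather than ideas, so there is no substantive obstacle; the one thing to be careful about is enforcing both $k_{J-1}<\beta n$ (needed for the bad bound) and $k_J\le n$ (needed for the upper bound) simultaneously, which is exactly why I would introduce the auxiliary $\beta'=\min(\beta,1/q)$. The resulting exponent is $q^{C_\alpha(D+1)}=q^{O_\alpha(D+1)}$ as required.
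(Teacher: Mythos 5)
Your proposal is correct and is essentially the same pigeonhole/telescoping argument as the paper: iterate the assumed bad growth ratio along a geometric sequence of scales $q^i k_0$ running from roughly $n^\alpha$ up to (just below) $n$, and play the resulting lower bound $q^{JC(D+1)}|S|$ for $|S^{k_J}|$ off against the hypothesis $|S^n|\le Mn^D|S|$ to bound $n$. The paper takes $C=(D+1)/(1-\alpha)$ directly (starting the sequence at $n^\alpha$ and using $z$ with $q^z\le\beta n^{1-\alpha}<q^{z+1}$) rather than your safer $C=2/(1-\alpha)$, but both land on the required $q^{O_\alpha(D+1)}$.
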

\begin{proof}
Suppose \eqref{eq:poly.pigeon} holds for a given $n$. Provided $n$ is large enough in terms of $\alpha$, on increasing $\alpha$ slightly if necessary we may assume that $n^\alpha\in\N$. Fix $z\in\Z$ such that $q^z\le \beta n^{1-\alpha}<q^{z+1}$. Condition \eqref{eq:poly.pigeon} implies in particular that $|S^n|\le Mn^D|S^{n^\alpha}|$, and so if $|S^{q^{\ell+1}n^\alpha}|>q^\frac{D+1}{1-\alpha}|S^{q^{\ell}n^\alpha}|$ for every $\ell\le z$ then we have
\begin{align*}
|S^n|&\ge|S^{q^zn^\alpha}|\\
    &>q^{\frac{z(D+1)}{1-\alpha}}|S^{n^\alpha}|\\
    &=\left(\frac{\beta}{q}\right)^\frac{D+1}{1-\alpha}\left(\beta^{-1}q^{z+1}\right)^\frac{D+1}{1-\alpha}|S^{n^\alpha}|\\
    &>\left(\frac{\beta}{q}\right)^\frac{D+1}{1-\alpha}n^{D+1}|S^{n^\alpha}|,
\end{align*}
and hence $n<M(q\beta^{-1})^\frac{D+1}{1-\alpha}$.
\end{proof}

It is then relatively straightforward to express the growth of this set $S^k$ in terms of a nilpotent progression using an argument of Breuillard and the second author, as follows.

\begin{prop}\label{prop:inverse''}
Let $M,D>0$. Then there exists $N=N_\cD$ such that if $S$ is a finite symmetric generating set for a group $G$ such that $1\in S$ and $|S^n|\le Mn^D|S|$ for some $n\ge\max\{N,5^{2D+2}M\}$ then there exists $k\in\Z$ with $n^{1/2}\le k\le n$, a set $X\subset G$ with $|X|\ll_\cD1$, and a nilpotent coset progression $HP$ of rank and step at most $O_\cD(1)$ such that $S^{rk}\subset XHP^r\subset S^{O_\cD(rk)}$ for every $r\in\N$.
\end{prop}
\begin{proof}
It follows from Lemma \ref{lem:poly.pigeon} that, provided $n$ is larger than some absolute constant, if $n\ge5^{2D+2}M$ then there exists $k$ with $n^{1/2}\le k\le n$ such that $|S^{5k}|\le5^{2D+2}|S^k|\le5^{2\cD+2}|S^k|$. Provided that $n$ is large enough in terms of $\cD$, it then follows from \cite[Proposition 2.9]{bt} that there exist subgroups $H_0\lhd\Gamma <G$ such that $\Gamma/H_0$ is nilpotent of step at most $O_\cD(1)$, an $O_\cD(1)$-approximate group $A\subset S^{8k}\cap\Gamma$ containing $H_0$, and a set $X\subset S^k$ with $|X|\ll_\cD1$ such that for every $r\in\N$ we have $S^{rk}\subset XA^r$. Applying Theorem \ref{thm:nilp.Frei} to the image of $A$ in $\Gamma/H_0$, there therefore exists a nilpotent coset progression $HP\subset A^{O_\cD(1)}$ of rank and step at most $O_\cD(1)$ such that $A\subset HP$, and hence $S^{rk}\subset XHP^r\subset S^{O_\cD(rk)}$, as required.
\end{proof}

\begin{remark}
The proof of \cite[Proposition 2.9]{bt} uses Theorem \ref{thm:bgt.noprog}. This is the only place in our proofs of Proposition \ref{prop:inverse} and Theorem \ref{thm:persitstence.poly.growth} that we use Theorem \ref{thm:bgt.noprog}, and hence the only source of ineffectiveness in these results.
\end{remark}

Of course, it is really the set $S^n$ itself whose growth we wish to express in terms of that of a progression. We can almost do this using Proposition \ref{prop:inverse''}, in that choosing $r_0$ so that $(r_0-1)k<n\le r_0k$ we can control powers of $S^n$ in terms of powers of $HP^{r_0}$. The following proposition allows us to replace $HP^{r_0}$ with another nilpotent coset progression.

\begin{prop}\label{prop:powers.are.nilprogs}
If $P=P_\ord(u_1,\ldots,u_d;L)$ is a nilpotent progression and $r\in\N$ then there is a nilpotent progression $P_r$ on the same generators as $P$ such that $P^r\subset P_r\subset P^{O_d(r)}$. Moreover, if $P$ is $m$-proper then we may take $P_r$ to be $\Omega_{d,r}(m)$-proper.
\end{prop}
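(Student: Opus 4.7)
The plan is to take $P_r$ to be the nilpotent progression on the same generators $x_1,\ldots,x_{r'}$ as $P$ but with base lengths $C r L_1,\ldots,C r L_{r'}$ for a suitable constant $C=C_d$ to be absorbed. The side length of each basic commutator $u_k$ in $P_r$ is then $(CrL)^{\chi(u_k)}=C^{|\chi(u_k)|}r^{|\chi(u_k)|}L_k$, so for the basic commutators the effective dilation is nilpotent (scaling weight-$w$ commutators by $r^w$) rather than uniform.

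For the inclusion $P^r\subset P_r$, Proposition~\ref{prop:nilp.prog.upper.tri} places $P$ in $O_d(1)$-upper-triangular form, so Lemma~\ref{lem:lengths.powers} gives $P^r\subset P_\ord(u;O_d(r^\zeta L))$ with $\zeta$ the weight function from Section~\ref{sec:ordered.v.nil}. My plan is to show $\zeta(k)\le|\chi(u_k)|$ by induction on $\zeta(k)$, after choosing the $P$-expressions of the $[u_i^{\pm 1},u_j^{\pm 1}]$ to be lifts of their collected forms in the free nilpotent Lie group (via Remark~\ref{rem:free.image}): then by Proposition~\ref{prop:non-basic.collected} any basic commutator $u_k$ appearing in such an expression satisfies $\chi(u_k)\ge\chi(u_i)+\chi(u_j)$, hence $|\chi(u_k)|\ge|\chi(u_i)|+|\chi(u_j)|\ge\zeta(i)+\zeta(j)$. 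Therefore $r^{\zeta(k)}L_k\le r^{|\chi(u_k)|}L_k$, and absorbing the $O_d(1)$ constant into the base lengths of $P_r$ gives $P^r\subset P_r$.

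The reverse inclusion $P_r\subset P^{O_d(r)}$ is the main obstacle. I would transfer to the Lie algebra of the free nilpotent Lie group containing $P$, after the rescaling of Proposition~\ref{prop:nilbox.upper-tri}. The key observation is that the tuple $(e;r^{|\chi|}L)$, with $e_i=\log u_i$, remains in upper-triangular form with the same constant as $(e;L)$: by Lemma~\ref{lem:up-closed} the $e_k$-coefficient of $[e_i,e_j]$ can be nonzero only when $\chi(u_k)\ge\chi(u_i)+\chi(u_j)$, in which case $r^{|\chi(u_k)|}/(r^{|\chi(u_i)|}r^{|\chi(u_j)|})\ge 1$, so the original upper-triangular bound still holds after the nilpotent dilation. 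Proposition~\ref{prop:dilates} then puts $P_r$ into $\exp B_\Z(e;O_d(r^{|\chi|}L))$ and relates $P^{O_d(r)}$ with $(\exp B_\Z(e;L))^{O_d(r)}$, reducing the matter to the ``ball--box'' containment
\[
\exp B_\Z(e;r^{|\chi|}L)\subset(\exp B_\Z(e;L))^{O_d(r)}.
\]
This I would prove by induction on the step $s$: quotienting by the centre $\z(\g)$ drops the step, so the induction hypothesis reaches the target modulo the centre using $O_d(r)$ factors; the residual central element of the required magnitude is then realised using the commutator identity $\alpha(x_{i_1}^{rL_{i_1}},\ldots,x_{i_w}^{rL_{i_w}})\equiv u_k^{r^w L_k}$ modulo strictly higher-weight commutators (implicit in Lemma~\ref{lem:comms.1}), together with the elementary bound $x_i^{rL_i}\in P^{r+1}$.

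The properness claim is then immediate. If $P$ is $m$-proper with respect to a homomorphism $\pi$, the images $\pi(u_1^{\ell_1}\cdots u_d^{\ell_d})$ remain distinct for $|\ell_k|\le mL_k$; since the side length of $u_k$ in $P_r$ is at most $O_d(r^{|\chi(u_k)|}L_k)\le O_d(r^s L_k)$, the progression $P_r$ is $\Omega_{d,r}(m)$-proper with respect to the same $\pi$.
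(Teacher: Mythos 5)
Your choice of $P_r$ (nilpotent progression on the same letters with base lengths scaled by $O_d(r)$) matches the paper's, and your forward inclusion and properness arguments are essentially identical to the paper's: you prove $\zeta(k)\le|\chi(u_k)|$ by exactly the induction underlying one direction of the paper's Lemma~\ref{lem:weights.agree}, and then invoke Proposition~\ref{prop:nilp.prog.upper.tri} and Lemma~\ref{lem:lengths.powers} as the paper does.

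Where you genuinely diverge is the reverse inclusion $P_r\subset P^{O_d(r)}$. The paper disposes of this in one line by citing the external result \cite[Lemma~C.2]{nilp.frei}, which gives $P_\ord(u;L^{|\chi|})\subset B(x;L)^{O_d(1)}$ directly on the group side, after which the trivial chain $B(x;\gamma rL)^{O_d(1)}\subset B(x;L)^{O_d(\gamma r)}\subset P^{O_d(r)}$ finishes. You instead propose to reprove a Lie-algebraic version of this ball--box containment from scratch, via the transfer machinery of Section~\ref{sec:coords} and an induction on the step. Your observation that $(e;r^{|\chi|}L)$ remains in upper-triangular form with the same constant (via Lemma~\ref{lem:up-closed}) is sound, though it is also a consequence of the fact that $P_r$ is itself a nilpotent progression and hence covered by Proposition~\ref{prop:nilp.prog.upper.tri}. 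Your route is more self-contained within this paper but considerably longer and requires care with technicalities you have glossed over: you still need the $Q_i$-rescaling and coset decomposition (Proposition~\ref{prop:nilbox.upper-tri} and Lemma~\ref{lem:coset.reps}) to make $\exp\langle e_1,\ldots,e_d\rangle$ a group before Proposition~\ref{prop:dilates} applies.

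There is one step in your step-induction that, as sketched, does not yet close. After applying the inductive hypothesis modulo $\z(\g)$, the residual central element has coordinates of arbitrary magnitude up to $O_d(r^s)L_k$ on each weight-$s$ generator $u_k$, not just $\pm r^sL_k$. The identity $\alpha(x_{i_1}^{rL_{i_1}},\ldots,x_{i_s}^{rL_{i_s}})=u_k^{r^sL_k}$ only realises that single exponent. To realise $u_k^{m}$ for general $|m|\le O_d(r^s)L_k$ with only $O_d(r)$ factors of $P$, you additionally need a digit-type decomposition (write $m/L_k$ in base $\approx r$ and realise each digit $a_jr^jL_k$ as a weight-$s$ commutator $\alpha(x_{i_1}^{a_jL_{i_1}},x_{i_2}^{rL_{i_2}},\ldots,x_{i_{j+1}}^{rL_{i_{j+1}}},x_{i_{j+2}}^{L_{i_{j+2}}},\ldots)$, each of which is a word of length $O_d(r)$ in $P$). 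This is fillable and is exactly the kind of argument packaged in \cite[Lemma~C.2]{nilp.frei}, but your sketch currently skips it.
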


\begin{proof}
Write $x_1,\ldots,x_{d'}$ for the ordered set of letters in which $u_1,\ldots,u_d$ are basic commutators. Proposition \ref{prop:nilp.prog.upper.tri} implies that $P$ is in $O_d(1)$-upper-triangular form, so we may apply Lemma \ref{lem:lengths.powers}. By Lemma \ref{lem:weights.agree}, the conclusion of Lemma \ref{lem:lengths.powers} implies that there exists $\gamma\in\N$ with $\gamma\ll_d1$ such that $P^r\subset P_\ord(u;(\gamma r)^{|\chi|}L)$. This is a nilpotent progression on the $x_i$ by definition, and is certainly $\Omega_{d,r}(m)$-proper if $P$ is $m$-proper.

It remains to show that $P_\ord(u;(\gamma r)^{|\chi|}L)\subset P^{O_d(r)}$, which we do following the proof of \cite[Proposition 3.10 (1)]{bt}. Write $B(x;L)=\cup_{i=1}^{d'}\{x_i^{\ell_1}:|\ell_1|\le L_i\}$. Given a basic commutator $u_i$ and $\ell\le(\gamma r)^{|\chi(u_i)|}L^{\chi(u_i)}$, \cite[Lemma C.2]{nilp.frei} says that $u_i^\ell\in B(x;\gamma rL)^{O_d(1)}$. However, by \eqref{eq:n.prog.lengths} we have $(\gamma r)^{|\chi(u_i)|}L^{\chi(u_i)}=(\gamma r)^{|\chi(u_i)|}L_i$, so in fact this gives us
\begin{align*}
P_\ord(u;(\gamma r)^{|\chi|}L)&\subset B(x;\gamma rL)^{O_d(1)}\\
    &\subset B(x;L)^{O_d(\gamma r)}\\
    &\subset P^{O_d(r)},
\end{align*}
as required.
\end{proof}

Combining Propositions \ref{prop:inverse''} and \ref{prop:powers.are.nilprogs}, we are now able to control powers of $S^n$ in terms of powers of a nilpotent progression, as follows.

\begin{prop}\label{prop:inverse'}
Let $M,D>0$. Then there exists $N=N_\cD$ such that if $S$ is a finite symmetric generating set for a group $G$ such that $1\in S$ and $|S^n|\le Mn^D|S|$ for some $n\ge\max\{N,5^{2D+2}M\}$ then there exists a set $X\subset G$ with $|X|\ll_\cD1$ and a nilpotent coset progression $HP$ of rank and step at most $O_\cD(1)$ such that $S^{rn}\subset XHP^r\subset S^{O_\cD(rn)}$ for every $r\in\N$.
\end{prop}
\begin{proof}
Apply Proposition \ref{prop:inverse''} and choose $r_0$ such that $(r_0-1)k<n\le r_0k$, so that
\[
S^{r'n}\subset XHP^{r'r_0}\subset S^{O_\cD(r'n)}
\]
for every $r'\in\N$. Proposition \ref{prop:powers.are.nilprogs} then implies that we may replace $HP^{r_0}$ with a nilpotent coset progression of the same rank and step as $HP$, and so the proposition is proved.
\end{proof}

The argument of \cite[Proposition 2.9]{bt} underpinning Proposition \ref{prop:inverse''}, and hence ultimately Proposition \ref{prop:inverse'}, exploits the fact that the elements of the set $X$ belong to distinct left-cosets of the group $\Gamma=\langle HP\rangle$. However, it turns out one can run similar arguments under a weaker `local' version of this hypothesis, in which the elements of $X$ merely belong to distinct left-translates of $HP^{O(1)}$. The following lemma shows that this local version of the hypothesis is in fact very general.

\begin{lemma}\label{lem:cosets.far.apart}
Let $k\in\N$. Let $HP_0=HP(u;L)$ be an ordered coset progression of rank $d$ in $C$-upper-triangular form in a group $G$, and let $X_0$ be a finite subset of $G$. Then there exists $X\subset X_0$, and an ordered progression $P\subset G$ on the same generators as $P_0$ such that $HP$ is in $O_{C,d,k,|X_0|}(1)$-upper-triangular form, such that
\[
X_0HP_0\subset XHP\subset X_0HP_0^{O_{C,d,k,|X_0|}(1)},
\]
and such that
\[
xHP^k\cap yHP^k=\varnothing
\]
for every pair $x,y\in X$ with $x\ne y$. Moreover, if $HP_0$ is a nilpotent coset progression then we may also take $HP$ to be a nilpotent coset progression, and if $HP_0$ is $m$-proper then we may take $HP$ to be $\Omega_{C,d,k,|X|}(m)$-proper.
\end{lemma}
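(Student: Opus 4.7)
The plan is a greedy pruning argument. Start with $X^{(0)} := X_0$ and $P^{(0)} := P_0$, and iterate: at step $i$, if the translates $xHP^{(i)k}$ are pairwise disjoint as $x$ ranges over $X^{(i)}$, stop and output $X := X^{(i)}$ and $P := P^{(i)}$. Otherwise, pick $x \ne y$ in $X^{(i)}$ with $xHP^{(i)k} \cap yHP^{(i)k} \ne \varnothing$, set $X^{(i+1)} := X^{(i)} \setminus \{y\}$, and inflate $P^{(i)}$ to a slightly larger ordered progression $P^{(i+1)}$ on the same generators so that the translate $yHP^{(i)}$ is absorbed into $xHP^{(i+1)}$. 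Since $|X^{(i)}|$ strictly decreases, the process terminates in at most $|X_0|$ rounds.

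For the absorption step, using $(HP^{(i)k})^{-1} \subset HP^{(i)dk}$ (which follows from \eqref{eq:prog.inverse} together with the normality of $H$), I would derive $y \in xHP^{(i)(d+1)k}$, and hence $yHP^{(i)} \subset xHP^{(i)((d+1)k+1)}$. By Lemma \ref{lem:lengths.powers}, $P^{(i)((d+1)k+1)}$ is contained in $P_\ord(u; C_0 k^\zeta L^{(i)})$ for some $C_0 = O_{C^{(i)},d}(1)$, so I define $P^{(i+1)} := P_\ord(u; C_0 k^\zeta L^{(i)})$. This guarantees $yHP^{(i)} \subset xHP^{(i+1)}$, so $X^{(i+1)} HP^{(i+1)} \supset X^{(i)} HP^{(i)}$; the first containment $X_0 HP_0 \subset XHP$ then follows inductively. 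For the second containment $XHP \subset X_0 HP_0^{O_{C,d,k,|X_0|}(1)}$, note that $X \subset X_0$ and that iterating the length inflation gives $P \subset P_0^{O_{C,d,k,|X_0|}(1)}$ via \eqref{eq:lengths.powers}.

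The key technical point is preservation of upper-triangular form and properness. The scaling factors $t_j := C_0 k^{\zeta(j)}$ satisfy $t_a t_b / t_\ell \le C_0$ whenever $u_\ell$ appears in the $P$-expression for $[u_a^{\pm 1}, u_b^{\pm 1}]$, because by the recursive definition of $\zeta$ at the start of Section \ref{sec:ordered.v.nil} we have $\zeta(\ell) \ge \zeta(a) + \zeta(b)$ in that case. Hence the new UT constant is at most $C_0$ times the old one, and after $|X_0|$ rounds the final UT constant is $O_{C,d,k,|X_0|}(1)$. Similarly, $m$-properness degrades multiplicatively by the same factor each round, giving $\Omega_{C,d,k,|X_0|}(m)$-properness at the end, which is at least $\Omega_{C,d,k,|X|}(m)$ as $|X| \le |X_0|$. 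For the nilpotent-coset-progression refinement, Lemma \ref{lem:weights.agree} gives $\zeta = |\chi|$, so scaling the base lengths $L_1, \ldots, L_r$ uniformly by a single factor $t$ automatically reproduces the correct scaling $L_i \mapsto t^{|\chi(u_i)|} L_i$ on the higher basic-commutator lengths, preserving the nilpotent form throughout the iteration.

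The main obstacle is the bookkeeping: tracking how the UT constant, the properness constant, and the side lengths compound through up to $|X_0|$ iterations, and confirming that the constant $C_0$ from Lemma \ref{lem:lengths.powers} (which depends on the current UT constant $C^{(i)}$) remains bounded in terms of $C, d, k, |X_0|$. An elementary induction in $i$ handles this, since each $C^{(i+1)}$ is bounded in terms of $C^{(i)}, d, k$ alone and there are at most $|X_0|$ iterations, so all final constants land in the stated $O_{C,d,k,|X_0|}(\cdot)$ and $\Omega_{C,d,k,|X|}(\cdot)$ bounds.
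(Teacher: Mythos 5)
Your proposal takes essentially the same route as the paper: the paper's proof is an induction on $|X_0|$ that, at each step, removes one element $x$ of $X_0$ whose translate $xHP_0^k$ collides with some $yHP_0^k$, uses $(P_0^k)^{-1}\subset P_0^{dk}$ (\eqref{eq:prog.inverse}) to absorb $xHP_0$ into $yHP_0^{dk+k+1}$, and inflates the progression so that the new one contains $P_0^{dk+k+1}$; your explicit iteration $X^{(0)}\supset X^{(1)}\supset\cdots$ is the same argument unrolled. The only real mechanical difference is that the paper inflates by a \emph{uniform} scalar $r\ll_{C,d,k}1$ (using Lemma \ref{lem:lengths.powers} together with \eqref{eq:lengths.powers} in the general case, and Proposition \ref{prop:powers.are.nilprogs} in the nilpotent case), whereas you inflate by \emph{per-coordinate} factors $t_j=C_0k^{\zeta(j)}$. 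Both work, and your observation that $t_it_j/t_\ell\le C_0$ whenever $u_\ell$ appears in the $P$-expression for $[u_i^{\pm1},u_j^{\pm1}]$ (because $\zeta(\ell)\ge\zeta(i)+\zeta(j)$) is correct and in fact gives a slightly cleaner accounting of the upper-triangular constant; your use of Lemma \ref{lem:weights.agree} to keep the inflated progression nilpotent amounts to reproving the relevant part of Proposition \ref{prop:powers.are.nilprogs}, which the paper simply cites.

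One small logical slip worth flagging: the final inference that ``$\Omega_{C,d,k,|X_0|}(m)$-proper $\ge\Omega_{C,d,k,|X|}(m)$-proper since $|X|\le|X_0|$'' does not follow as stated, because the implied constant degrades once per iteration, and the number of iterations is $|X_0|-|X|$, so it is naturally a decreasing function of $|X_0|$, not of $|X|$; having a smaller $|X|$ only makes the constant worse. What your argument actually establishes is $\Omega_{C,d,k,|X_0|}(m)$-properness. The paper's own proof, traced carefully, also yields a constant depending on $|X_0|$ rather than $|X|$, so the lemma's statement is either a typo or is tacitly allowing dependence on the input cardinality; in any case, in every application $|X_0|\ll_D1$ is bounded, so the distinction is harmless --- but you should not present the reduction from $|X_0|$ to $|X|$ as a logical implication.
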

\begin{proof}
We proceed by induction on $|X_0|$, noting that the result is trivial if $X_0$ is a singleton. If $xHP_0^k\cap yHP_0^k=\varnothing$ for every pair $x,y\in X_0$ with $x\ne y$ then we may take $X=X_0$ and $P=P_0$. If not then there exist distinct elements $x,y\in X_0$ such that $xHP_0^k$ and $yHP_0^k$ have non-trivial intersection, which implies that $x\in yHP_0^kP_0^{-k}$, and hence by \eqref{eq:prog.inverse} that $x\in yHP_0^{dk+k}$. In particular, setting $X_1=X_0\backslash\{x\}$ we have $X_0HP_0\subset X_1HP_0^{dk+k+1}$. 

If $HP_0$ is a nilpotent coset progression then Proposition \ref{prop:powers.are.nilprogs} implies that there exists a nilpotent coset progression $HP_1$ on the same generators such that $HP_0^{dk+k+1}\subset HP_1\subset HP_0^{O_d(k)}$ and such that $HP_1$ is $\Omega_{d,k}(m)$-proper if $HP_0$ is $m$-proper. Moreover, $HP_1$ is in $O_d(1)$-upper-triangular form by Proposition \ref{prop:nilp.prog.upper.tri}, and so the lemma follows by induction.

If $HP_0$ is not a nilpotent coset progression then it nonetheless follows from Lemma \ref{lem:lengths.powers} and \eqref{eq:lengths.powers} that there exists $r\ll_{C,d,k}1$ such that if we set $P_1=P_\ord(u;rL)$ then $X_0HP_0\subset X_1HP_1\subset X_0HP_0^{O_{C,d,k}(1)}$. Note that $HP_1$ is in $rC$-upper-triangular form, and is $m/r$-proper if $HP_0$ is $m$-proper. Again, the lemma therefore follows by induction.
\end{proof}

The following lemma can be thought of as a local version of \cite[Lemma 2.7]{bt}, with the assumption \eqref{eq:local.coset} replacing the stronger assumption that $x\notin y\langle A\rangle$ for every pair $x,y\in X$ with $x\ne y$.

\begin{lemma}\label{lem:coset.reps.local}
Let $q\in\N$. Let $S$ be a finite generating set for a group $G$ with $1\in S$, and let $X,A$ be subsets of $G$ containing $1$ such that $|X|<q$ and
\begin{equation}\label{eq:Sn.in.XA}
S^q\subset XA.
\end{equation}
Suppose that
\begin{equation}\label{eq:local.coset}
x\notin y(AA^{-1})^3
\end{equation}
for every pair $x,y\in X$ with $x\ne y$. Then there exists $X'\subset S^{|X|}\cap XA$ containing the identity such that $|X'|\le |X|$ and $S^q\subset X'A^{-1}A$.
\end{lemma}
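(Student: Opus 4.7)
The plan is to define a ``coset map'' $\varphi\colon S^k\to X$ by $\varphi(s)=x$, where $x\in X$ is the unique element with $s\in xA$, and to show that the ascending chain $\varphi(S^0)\subset\varphi(S^1)\subset\cdots\subset\varphi(S^k)$ must stabilise by step $|X|-1$; a choice of preimages in $S^{|X|-1}$ will then give the desired $X'$. Well-definedness of $\varphi$ follows from \eqref{eq:local.coset}, since two decompositions $s=xa=ya'$ give $y^{-1}x\in AA^{-1}\subset(AA^{-1})^3$ and thus force $x=y$; note also that $\varphi(1)=1$, since $1=1\cdot 1$ is then the only decomposition.

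The crux will be the following stabilisation claim: if $\varphi(S^j)=\varphi(S^{j+1})$ for some $j<k$ then $\varphi(S^{j+1})=\varphi(S^{j+2})$, and hence $\varphi(S^j)=\varphi(S^\ell)$ for every $j\le\ell\le k$. Given $s\in S^{j+2}$, one writes $s=ts'$ with $t\in S$ and $s'\in S^{j+1}$, picks $s''\in S^j$ with $\varphi(s'')=\varphi(s')$, and then unwinds the defining equations $s'=\varphi(s')a'$, $s''=\varphi(s')a''$, $ts''=\varphi(ts'')a'''$ and $s=\varphi(s)a$ (each of $a,a',a'',a'''\in A$) to arrive at
\[
\varphi(ts'')^{-1}\varphi(s)=a'''(a'')^{-1}a'a^{-1}\in AA^{-1}AA^{-1}\subset(AA^{-1})^3.
\]
Since $ts''\in S^{j+1}\subset XA$, the element $\varphi(ts'')$ lies in $X$, and hypothesis \eqref{eq:local.coset} forces $\varphi(s)=\varphi(ts'')\in\varphi(S^{j+1})$, which is what was required.

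Granted the claim, the rest is straightforward bookkeeping. The sequence $|\varphi(S^j)|$ is non-decreasing with values in $\{1,\ldots,|X|\}$ and, by the claim, strictly increases until its stabilisation point; since $|\varphi(S^0)|=1$, the stabilisation must occur by step $|X|-1$, giving $\varphi(S^{|X|-1})=\varphi(S^k)$ (the hypothesis $|X|<k$ ensures $|X|-1<k$ so that the argument fits inside $S^k$). For each $y\in\varphi(S^k)$ choose $s_y\in S^{|X|-1}\cap\varphi^{-1}(y)$, taking $s_1=1$, and set $X'=\{s_y:y\in\varphi(S^k)\}$. Then $X'\subset S^{|X|}\cap XA$, $|X'|\le|X|$ and $1\in X'$; and for any $s\in S^k$ we have $\varphi(s)=\varphi(s_y)$ for some $y$, so $s$ and $s_y$ both lie in $\varphi(s)A$ and consequently $s\in s_y A^{-1}A\subset X'A^{-1}A$.

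The main obstacle is the stabilisation claim, and the cube $(AA^{-1})^3$ in \eqref{eq:local.coset} is tailored precisely for it: the word in $a,a',a'',a'''$ that emerges alternates four factors between $A$ and $A^{-1}$. Uniqueness of the coset decomposition alone would only require $AA^{-1}$, so the strength of the hypothesis is exactly what makes the inductive step go through.
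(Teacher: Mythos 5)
Your proof is correct, and it shares the paper's core mechanism: a pigeonhole argument on the nondecreasing chain $X_j=\{x\in X:xA\cap S^j\neq\varnothing\}$ (your $\varphi(S^j)$), which stabilises at some $r\le|X|$ because $|X|<k$, followed by a construction of $X'$ from representatives $x'\in xA\cap S^r$. Where you genuinely diverge is in the propagation step. The paper first establishes $S^{r+1}\subset X'A^{-1}A$ and then runs a bootstrap: it shows inductively that $S^iX'\subset X'(A^{-1}A)^2$, and intersects with the hypothesis $S^k\subset XA$ to pull this back into $X'A^{-1}A$. You instead prove directly that once $\varphi(S^j)=\varphi(S^{j+1})$ the chain never grows again, which makes the conclusion immediate and avoids the bootstrap entirely; this organisation is cleaner. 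One small imprecision in your closing commentary: the word $a'''(a'')^{-1}a'a^{-1}$ you produce lies in $(AA^{-1})^2$, not $(AA^{-1})^3$, so your inductive step would in fact go through under the weaker hypothesis $x\notin y(AA^{-1})^2$. It is the paper's bootstrap that genuinely needs the cube, because it compares coset centres through the representative set $X'\subset X_rA$ and so conjugates by an extra factor of $A$; your remark that the cube is ``tailored precisely for'' your inductive step is therefore off by one factor, though this does not affect correctness since $(AA^{-1})^2\subset(AA^{-1})^3$.
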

\begin{proof}
Set $X_j=\{x\in X:xA\cap S^j\ne\varnothing\}$ for $j=1,\ldots,q$, noting that
\begin{equation}\label{eq:SrXr}
S^j\subset X_jA
\end{equation}
for every such $j$. Since $1\in S$, the set $S^j$ is non-decreasing in $j$, and hence so is the set $X_j$. Since $|X|<q$ there is therefore some $r\le|X|$ satisfying $X_r=X_{r+1}$. For each $x\in X_r$, we may by definition pick $x'\in xA\cap S^r$, taking in particular $1'=1$. Write $X'$ for the set of $x'$ we have chosen, noting that $X'\subset S^{|X|}\cap X_rA$. Note also that $X_r\subset X'A^{-1}$, which combines with \eqref{eq:SrXr} and the definition of $r$ to imply that
\begin{equation}\label{eq:Sr+1X'}
S^{r+1}\subset X'A^{-1}A.
\end{equation}
Now \eqref{eq:Sn.in.XA} implies that $S^q\cap X'(A^{-1}A)^2\subset XA\cap X'(A^{-1}A)^2$, which is in turn a subset of $XA\cap X_rA(A^{-1}A)^2$. It follows that every element $z$ of $S^q\cap X'(A^{-1}A)^2$ can be written both in the form $xa_6$ and in the form $x'a_1a_2^{-1}a_3a_4^{-1}a_5$ with $x\in X$, with $x'\in X_r$ and with $a_i\in A$. Since this implies $x=x'a_1a_2^{-1}a_3a_4^{-1}a_5a_6^{-1}$, \eqref{eq:local.coset} implies that $x=x'$, and hence $z=x'a_6\in X_rA$. Thus $S^q\cap X'(A^{-1}A)^2\subset X_rA$, and hence
\begin{equation}\label{eq:bootstrap}
S^q\cap X'(A^{-1}A)^2\subset X'A^{-1}A.
\end{equation}
We claim in addition that
\begin{equation}\label{SiX'.in.X'A2}
S^iX'\subset X'A^{-1}A
\end{equation}
for every $i\le q-r$. The case $i=1$ follows from \eqref{eq:Sr+1X'} and the fact that $X'\subset S^r$. When $1<i\le q-r$, on the other hand, we have
\begin{align*}
S^iX'&=S^{i-1}SX'\\
    &\subset S^{i-1}X'A^{-1}A &\text{(by the case $i=1$)}\\
    &\subset X'(A^{-1}A)^2 &\text{(by induction).}
\end{align*}
Since $X'\subset S^r$ and $i\le q-r$, we therefore have in particular $S^iX'\subset S^q\cap X'(A^{-1}A)^2$, and so \eqref{SiX'.in.X'A2} follows from \eqref{eq:bootstrap}, as claimed. However, we also have
\begin{align*}
S^q&\subset S^{q-r}X'A^{-1}A&\text{(by \eqref{eq:Sr+1X'})}\\
    &\subset X'(A^{-1}A)^2&\text{(by \eqref{SiX'.in.X'A2}),}
\end{align*}
and so \eqref{eq:bootstrap} gives $S^q\subset X'A^{-1}A$, as required.
\end{proof}

The following lemma can be thought of as a local version of part of \cite[Proposition 2.9]{bt}.

\begin{lemma}\label{lem:set.growth.in.terms.of.prog}
Let $q\in\N$, let $S$ be a finite generating set for a group $G$, and let $X,A$ be subsets of $G$ such that $X\subset S^q$ and $S^{2q}\subset XA$. Let $r\in\N$ with $r\ge 2$. Then $S^{rq}\subset X(A\cap S^{-q}S^{2q})^{r-1}$.
\end{lemma}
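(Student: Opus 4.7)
The plan is to prove the lemma by induction on $r \ge 2$, using the hypothesis $S^{2t}\subset XA$ as both the base case and the main tool for the inductive step. The key observation is that whenever we have an element of $S^{rt}$ written as $s\cdot h$ with $s\in S^t$ and $h\in S^{(r-1)t}$, and we can already express $h=x'a_1\cdots a_{r-2}$ with $x'\in X$, the prefix $sx'$ lies in $S^t\cdot S^t = S^{2t}$, which is the exact regime in which the hypothesis $S^{2t}\subset XA$ applies.

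For the base case $r=2$, I would take $g\in S^{2t}$ and use the hypothesis to write $g=xa$ with $x\in X$ and $a\in A$; since $X\subset S^t$, we get $a=x^{-1}g \in S^{-t}S^{2t}$, so $a\in A\cap S^{-t}S^{2t}$, as required.

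For the inductive step, assume the statement for $r-1$ (where $r\ge3$) and take $g\in S^{rt}$. Decompose $g = sh$ with $s\in S^t$ and $h\in S^{(r-1)t}$. The inductive hypothesis gives $h = x'a_1\cdots a_{r-2}$ with $x'\in X$ and each $a_i\in A\cap S^{-t}S^{2t}$. Since $sx'\in S^t\cdot S^t = S^{2t}$, the base-case argument (applied to $sx'$) produces $x\in X$ and $a_0\in A$ with $sx' = xa_0$, and $a_0 = x^{-1}(sx')\in S^{-t}S^{2t}$, so $a_0\in A\cap S^{-t}S^{2t}$. Therefore $g = x\cdot a_0 a_1\cdots a_{r-2}\in X(A\cap S^{-t}S^{2t})^{r-1}$, completing the induction.

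There is no substantial obstacle here; the only subtlety is choosing the correct decomposition $g=sh$ (with $s$ on the left of length $t$, not on the right) so that combining $s$ with the $X$-prefix of $h$ produces something in $S^{2t}$ whose $A$-part can be certified to lie in $S^{-t}S^{2t}$.
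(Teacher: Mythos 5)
Your proof is correct and is essentially the paper's argument: the paper also induces on $r$, writes $S^{rt}\subset S^tX(A\cap S^{-t}S^{2t})^{r-2}$ by induction, and then uses $X\subset S^t$ to absorb $S^tX$ into $S^{2t}$ and reapply the $r=2$ case. You have simply spelled out the same set-containment chain element by element.
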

\begin{proof}
The case $r=2$ is trivial. For $r>2$ we have $S^{rq}\subset S^qX(A\cap S^{-q}S^{2q})^{r-2}$ by induction, and then since $X\subset S^q$ the desired conclusion follows from the $r=2$ case.
\end{proof}

\begin{proof}[Proof of Proposition \ref{prop:inverse+}]
Let $c\gg_{\cD,m,k}1$ and $\rho\in\N$ be constants to be chosen later such that $c<1$, such that $cn\in2\Z$ and such that $\rho\ll_\cD1$. The bound \eqref{eq:poly.growth} implies in particular that $|S^{cn}|\le Mc^{-D}(cn)^D|S|$. Provided $n$ is large enough in terms of $\cD$ and $n\ge5^{2D+2}c^{-D-1}M$, therefore, Proposition \ref{prop:inverse'} applied to $S^{cn}$ implies that there exists a set $X_0\subset G$ with $|X_0|\ll_\cD1$ and a nilpotent coset progression $H_0P_0$ of rank and step at most $O_\cD(1)$ such that $S^{cn}\subset X_0H_0P_0\subset S^{O_\cD(cn)}$. Theorem \ref{thm:nilp.bilu} then implies that there exists a set $X\subset G$ containing $1$ with $|X|\ll_\cD1$ and an $\rho m$-proper ordered coset progression $HP_1=HP_\ord(u;L)$ in $1$-upper-triangular form and of rank at most $O_\cD(1)$ such that
\begin{equation}\label{eq:before.k}
S^{cn}\subset XHP_1\subset S^{O_{\cD,m}(cn)}.
\end{equation}
By Lemma \ref{lem:cosets.far.apart}, and at the expense of weakening \eqref{eq:before.k} to $S^{cn}\subset XHP_1\subset S^{O_{\cD,m,k}(cn)}$, we may assume that \eqref{eq:XHP.proper} holds. Moreover, if we increase $k$ if necessary by an amount depending only on $\cD$ then \eqref{eq:XHP.proper} and \eqref{eq:prog.inverse} combine with Lemma \ref{lem:coset.reps.local} to imply that on replacing $X$ with the set $X'$ given by Lemma \ref{lem:coset.reps.local} we have \eqref{eq:X.in.S^C} for some $t=t_\cD$ and
\begin{equation}\label{eq:before.rho}
S^{cn}\subset XHP_1^{O_\cD(1)}\subset S^{O_{\cD,m,k}(cn)}.
\end{equation}
Lemma \ref{lem:lengths.powers} and \eqref{eq:lengths.powers} then imply that we may choose $\rho\ll_\cD1$ such that we may replace $P_1^{O_\cD(1)}$ in \eqref{eq:before.rho} by $P=P_\ord(u;\rho L)$, and Lemma \ref{lem:upper-tri.dilate} implies that $P$ is in $O_\cD(1)$-upper-triangular form. Note also that $HP$ is $m$-proper. Provided $c$ is small enough in terms of $\cD$, $m$ and $k$, we then have \eqref{eq:point.where.we.assume.minimality}, as required.

Given $u_i$ with $\zeta(i)=1$, note that the ordered coset progression
\[
HP_\ord(u_1,\ldots,u_{i-1},u_{i+1},\ldots,u_d;\rho L_1,\ldots,\rho L_{i-1},\rho L_{i+1},\ldots,\rho L_d)
\]
formed by deleting the generator $u_i$ from $P$ is still $m$-proper and in $O_\cD(1)$-upper-triangular form, and so we may assume that $u_i$ is necessary for \eqref{eq:point.where.we.assume.minimality} to hold in the sense that there exists $s\in S^{cn}$ and $x\in X$, and $p\in HP$ with non-zero $u_i$-coordinate such that $s=xp$, as required. Provided $n$ is large enough in terms of $\cD$, $m$ and $k$, applying Lemma \ref{lem:set.growth.in.terms.of.prog} with $q=cn/2$ then combines with \eqref{eq:X.in.S^C} and \eqref{eq:point.where.we.assume.minimality} to imply that $XHP^r\subset S^{rn}\subset XHP^{O_{\cD,m,k}(r)}$ for every $r\in\N$, as required.
\end{proof}

\section{Persistence of polynomial growth}\label{sec:conj-1}
In this section we prove Theorem \ref{thm:persitstence.poly.growth}.

\begin{lemma}\label{lem:weight-1.linear}Let $C>0$ and $d_0,k\in\N$. Then there exists $m\ll_{C,d_0,k}1$ such that if $P=P_\ord(u;L)$ is an $m$-proper ordered progression of rank $d\le d_0$ in $C$-upper-triangular form, and if $p,q\in P^k$ with $p=u_1^{p_1}\cdots u_d^{p_d}$ and $q=u_1^{q_1}\cdots u_d^{q_d}$, then we have $pq\in P_\ord(u;mL)$, and for every $i$ with $\zeta(i)=1$ the $u_i$-coordinate of $pq$ with respect to $P$ is $p_i+q_i$.
\end{lemma}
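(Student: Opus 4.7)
The plan is to combine Lemma \ref{lem:lengths.powers} (to house $pq$ in a progression where $m$-properness gives unique coordinates) with a direct bookkeeping analysis of the collection procedure used in the proof of that lemma.

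First, since $p,q\in P^k$ we have $pq\in P^{2k}$, so Lemma \ref{lem:lengths.powers} places $pq$ inside $P_\ord(u;(2k)^\zeta L\cdot O_{C,d}(1))$. Because $\zeta(i)\le d\le d_0$ for each $i$, choosing $m\ge O_{C,d_0}((2k)^{d_0})$ suffices to guarantee $pq\in P_\ord(u;mL)$. The hypothesis that $P$ is $m$-proper then ensures that the expression $pq=u_1^{r_1}\cdots u_d^{r_d}$ with $|r_i|\le mL_i$ is unique, so it is enough to exhibit \emph{some} such expression in which the $u_i$-coordinate equals $p_i+q_i$ for every $i$ with $\zeta(i)=1$.

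Next, I would run the collection procedure on the concatenated word
\[
u_1^{p_1}\cdots u_d^{p_d}\cdot u_1^{q_1}\cdots u_d^{q_d},
\]
exactly as in the proof of Lemma \ref{lem:lengths.powers}: repeatedly apply the rewriting rule
\[
u_j^{\eps_j}u_i^{\eps_i}=u_i^{\eps_i}u_j^{\eps_j}\cdot[u_j^{\eps_j},u_i^{\eps_i}]\qquad(i<j),
\]
replacing each commutator $[u_j^{\eps_j},u_i^{\eps_i}]$ by its chosen $P$-expression, which by $C$-upper-triangularity is a word in the letters $u_{j+1},\ldots,u_d$ only. This terminates in a collected word $u_1^{r_1}\cdots u_d^{r_d}$. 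The key observation is that when $\zeta(i)=1$, by the definition of $\zeta$ the letter $u_i$ does not appear in the $P$-expression for any commutator $[u_a^{\pm 1},u_b^{\pm 1}]$. Consequently, no step of the collection procedure introduces a fresh $u_i^{\pm 1}$ letter, nor does any step delete or cancel an existing one; the steps merely transpose adjacent letters (of which at most one is $u_i^{\pm 1}$) while appending a word on higher-indexed generators.

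It follows that the signed total number of $u_i$-letters in the collected form equals the signed total in the initial word $pq$, which is $p_i+q_i$. Combined with the uniqueness granted by $m$-properness, this identifies $r_i=p_i+q_i$ whenever $\zeta(i)=1$. The only mild technical point is verifying the uniformity of the choice of $m$ in $C,d_0,k$, which is immediate from the bound $\zeta(i)\le d_0$; the rest is a straightforward tracking of the collection process, and I do not anticipate a serious obstacle.
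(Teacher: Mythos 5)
Your proof is correct and follows essentially the same route as the paper's, whose own proof is a one-line invocation of Lemma \ref{lem:lengths.powers} together with repeated application of the collection identity $vu=uv[v,u]$ and the upper-triangular form --- precisely the steps you have spelled out. One very minor imprecision: the bound $\zeta(i)\le d$ is not right in general, since the recursion defining $\zeta$ can grow as fast as the Fibonacci numbers in $d$; however $\zeta(i)\ll_{d_0}1$ still holds, so your choice of $m\ll_{C,d_0,k}1$ goes through unchanged.
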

\begin{proof}
This follows from Lemma \ref{lem:lengths.powers} and repeated application of the upper-triangular form and the identity $vu=uv[v,u]$.
\end{proof}

\begin{proof}[Proof of Theorem \ref{thm:persitstence.poly.growth}]
Let $d_0\in\N$ the maximum possible rank $d$ given by applying Proposition \ref{prop:inverse+}, and let $C$ also take the maximum possible value it can take in the conclusion of that proposition, assuming $D$ is as in the theorem we are proving. Set
\begin{equation}\label{eq:k=10}
k=10,
\end{equation}
and let $m$ be the constant given by applying Lemma \ref{lem:weight-1.linear}, noting that we may assume that $m\ge1$. Finally, assume that $n\ge\max\{N,NM\}$ for $N=N_{\cD,m,k}$ as in Proposition \ref{prop:inverse+}, and let $X,H,P,c,t$ be as given by that result.

Since $m,t\ll_\cD1$, if $n$ is large enough in terms of $\cD$ then \eqref{eq:point.where.we.assume.minimality} shows that applying Lemma \ref{lem:set.growth.in.terms.of.prog} with $q=cn/2$ gives
\[
S^{4cn}\subset XHP^9.
\]
Applying it with $q=t$ also implies by \eqref{eq:point.where.we.assume.minimality} that $S^{rt}\subset X(HP\cap S^{3t})^r$ for every $r\in\N$, and in particular that
\[
S^{cn}\subset X(HP\cap S^{3t})^{\lceil cn/t\rceil}.
\]
Provided again that $n$ is large enough in terms of $\cD$, these two containments combine with \eqref{eq:X.in.S^C} to imply that
\begin{equation}\label{eq:XHP9}
S^{cn}\subset X(HP\cap S^{3t})^{\lceil cn/t\rceil}\subset XHP^9.
\end{equation}

We claim that, for every $j\le\lceil cn/t\rceil$, for every $q_1,\ldots,q_j\in HP\cap S^{3t}$ we have $q_1\cdots q_j\in HP^9$; \cref{lem:weight-1.linear} will then allow us to control the coordinates of the $q_i$ with respect to $P$. The claim is trivial for $j=1$, and for $j>1$ we may assume by induction that $q_1\cdots q_{j-1}\in HP^9$, and hence that $q_1\cdots q_j\in HP^{10}$. However, \eqref{eq:XHP9} implies that $q_1\cdots q_j\in XHP^9$, and so \eqref{eq:XHP.proper} and \eqref{eq:k=10} imply that in fact $q_1\cdots q_j\in HP^9$, and the claim is proved. By \eqref{eq:XHP9}, this implies in particular that for every $s\in S^{cn}$ there exist $q_1(s),\ldots,q_{\lceil cn/t\rceil}(s)\in HP\cap S^{3t}$ and $x(s)\in X$ such that
\[
s=x(s)q_1(s)\cdots q_{\lceil cn/t\rceil}(s)
\]
and
\begin{equation}\label{eq:bootstrap.2}
q_1(s)\cdots q_j(s)\in HP^9
\end{equation}
for every $j$.

We now claim that $L_i\gg_\cD n$ for every generator $u_i$ of $P$ with $\zeta(i)=1$. Proposition \ref{prop:inverse+} implies that for every such $u_i$ there exists $s_i\in S^{cn}$ and $x_i\in X$, and $p_i\in HP$ with non-zero $u_i$-coordinate, such that $s_i=x_ip_i$. This implies in particular that $x_ip_i=x(s_i)q_1(s_i)\cdots q_{\lceil cn/t\rceil}(s_i)$, and so \eqref{eq:XHP.proper}, \eqref{eq:k=10} and \eqref{eq:bootstrap.2} imply that $x(s_i)=x_i$ and, more importantly,
\[
p_i=q_1(s_i)\cdots q_{\lceil cn/t\rceil}(s_i).
\]
Since $p_i$ has non-zero $u_i$-coordinate, this combines with \eqref{eq:bootstrap.2} and Lemma \ref{lem:weight-1.linear} to imply that some $q_j(s_i)$ has non-zero $u_i$-coordinate. However, since $q_j(s_i)\in S^{3t}$, we have $q_j(s_i)^\ell\in S^{cn}$ for every $\ell\in\N$ with $1\le\ell\le cn/3t$. Since $S^{cn}\subset XHP$ and $q_j(s_i)\in HP$, it therefore follows from repeated application of \eqref{eq:XHP.proper} and \eqref{eq:k=10} that in fact $q_j(s_i)^\ell\in S^{cn}\cap HP$ for every $\ell\in\N$ with $1\le\ell\le cn/3t$, and so Lemma \ref{lem:weight-1.linear} implies that $L_i\gg_\cD n$, as claimed.

The upper-triangular form of $HP$ therefore implies that for every $i$ we have $L_i\gg_\cD n^{\zeta(i)}$. Writing $\omega=\sum_i\zeta(i)$, the inequality \eqref{eq:proper.size} and the properness of $HP$ therefore imply that $|HP|\gg_\cD n^\omega|H|$. Combined with \eqref{eq:persist.poly.growth} and the $r=1$ case of \eqref{eq:inverse}, this implies that there exists $a=a_\cD$ such that $n^\omega\le aMn^D$. It follows that if
\[
n>(aM)^\frac{1}{1-\{D\}}
\]
then $\omega<\lfloor D\rfloor+1$. Since each $\zeta(i)\in\Z$, this in fact gives
\begin{equation}\label{eq:D<d}
\omega\le\lfloor D\rfloor.
\end{equation}
On the other hand, Lemma \ref{lem:lengths.powers}, \eqref{eq:proper.size} and the properness of $HP$ imply that
\begin{equation}\label{eq:P.growth}
|HP^r|\ll_\cD r^\omega|HP|
\end{equation}
for every $r\in\N$, and so for every such $r$ we have
\begin{align*}
|S^{rn}|&\ll_\cD|HP^{O_\cD(r)}|&\text{(by \eqref{eq:inverse})}\\
    &\ll_\cD r^{\lfloor D\rfloor}|HP|&\text{(by \eqref{eq:D<d} and \eqref{eq:P.growth})}\\
    &\le r^{\lfloor D\rfloor}|S^n|&\text{(by \eqref{eq:inverse})},
\end{align*}
and the theorem is proved.
\end{proof}


\section*{Acknowledgments} 
It is a pleasure to thank Itai Benjamini and Emmanuel Breuillard for helpful conversations and comments. We are also indebted to an anonymous referee for an extremely detailed report, including a number of suggestions that have improved the presentation of the paper considerably, particularly in \cref{sec:gen.of.progs}.

\bibliographystyle{amsplain}


\begin{dajauthors}
\begin{authorinfo}[romain]
  Romain Tessera\\
  Laboratoire de Math\'ematiques d'Orsay\\
  Univ.~Paris-Sud\\
  CNRS\\
  Universit\'e Paris-Saclay\\
  91405 Orsay\\
  France\\
  tessera\imageat{}phare\imagedot{}normalesup\imagedot{}org\\
  \url{http://www.normalesup.org/~tessera/}
\end{authorinfo}
\begin{authorinfo}[matt]
  Matthew Tointon\\
  Pembroke College\\
  Cambridge\\
  CB2 1RF\\
  United Kingdom\\
  mcht2\imageat{}cam\imagedot{}ac\imagedot{}uk\\
  \url{https://tointon.neocities.org/}
\end{authorinfo}
\end{dajauthors}

\end{document}